\newcommand{\onehalf}{\nicefrac{1}{2}}
\newcommand{\tn}{|\mspace{-1mu}|\mspace{-1mu}|}
\newcommand{\normv}[1]{\|#1\|_{\bV_h}}
\newcommand{\normvast}[1]{\|#1\|_{\bV_h,*}}
\newcommand{\normt}[1]{\|#1\|_{Q_{T,h}}}
\newcommand{\normtast}[1]{\|#1\|_{Q_{T,h},*}}
\newcommand{\normf}[1]{\|#1\|_{Q_{F,h}}}
\newcommand{\normfast}[1]{\|#1\|_{Q_{F,h},*}}
\newcommand{\tnormv}[1]{\tn#1\tn_{\bV_h}}
\newcommand{\tnormt}[1]{\tn#1\tn_{Q_{T,h}}}
\newcommand{\tnormf}[1]{\tn#1\tn_{Q_{F,h}}}
\newtheorem{theorem}{Theorem}[section]
\newtheorem{remark}{Remark}[section]
\newtheorem{lemma}{Lemma}[section]
\newtheorem{corollary}{Corollary}[section]
\newtheorem{assumption}{Assumption}[section]
\numberwithin{equation}{section}
\numberwithin{theorem}{section}
\numberwithin{table}{section}
\numberwithin{figure}{section}
\begin{document}

\begin{frontmatter}

%% Title, authors and addresses

%% use the tnoteref command within \title for footnotes;
%% use the tnotetext command for theassociated footnote;
%% use the fnref command within \author or \affiliation for footnotes;
%% use the fntext command for theassociated footnote;
%% use the corref command within \author for corresponding author footnotes;
%% use the cortext command for theassociated footnote;
%% use the ead command for the email address,
%% and the form \ead[url] for the home page:
%% \title{Title\tnoteref{label1}}
%% \tnotetext[label1]{}
%% \author{Name\corref{cor1}\fnref{label2}}
%% \ead{email address}
%% \ead[url]{home page}
%% \fntext[label2]{}
%% \cortext[cor1]{}
%% \affiliation{organization={},
%%             addressline={},
%%             city={},
%%             postcode={},
%%             state={},
%%             country={}}
%% \fntext[label3]{}

\title{A cut finite element method for the Biot system of poroelasticity}

%% use optional labels to link authors explicitly to addresses:
%% \author[label1,label2]{}
%% \affiliation[label1]{organization={},
%%             addressline={},
%%             city={},
%%             postcode={},
%%             state={},
%%             country={}}
%%
%% \affiliation[label2]{organization={},
%%             addressline={},
%%             city={},
%%             postcode={},
%%             state={},
%%             country={}}

%\author{Nanna Berre, Andre Massing, Kent-Andre Mardal, Ivan Yotov} %% Author name

\author[1]{Nanna Berre\corref{cor}}
\ead{nanna.berre@ntnu.no}
\address[1]{Department of Mathematical Sciences, Norwegian University of Science and Technology, Trondheim, Norway}

\author[2,3]{Kent-Andre Mardal}
\address[2]{Department of Numerical Analysis and Scientific Computing, Simula Research Laboratory, Oslo,
Norway}
\address[3]{Department of Mathematics, University of Oslo, Oslo, Norway}
\ead{kent-and@simula.no}

\author[1]{Andr{\'e} Massing}
\ead{andre.massing@ntnu.no}

\author[4]{Ivan Yotov}
\address[4]{Department of Mathematics, University of Pittsburgh, Pittsburgh, USA}
\ead{yotov@math.pitt.edu}

\cortext[cor]{Corresponding author}

%% Abstract
\begin{abstract}
We propose a novel cut finite element method for the numerical solution of the Biot system of poroelasticity.
The Biot system couples elastic deformation of a porous solid with viscous fluid flow and commonly arises on domains with complex geometries that make high-quality volumetric meshing challenging. To address this issue, we employ the cut finite element framework, where the domain boundary is represented independently of the background mesh, 
which significantly simplifies the meshing process.
Our approach builds upon a parameter robust total pressure formulation of the Biot system, which we combine with the cut finite element method to develop a geometrically robust solution scheme while preserving parameter robustness. 
A key ingredient in the theoretical analysis is a modified inf-sup condition which also holds for mixed boundary conditions, leading to stability and optimal error estimates for the proposed formulation.
Finally, we provide numerical evidence demonstrating the theoretical properties of the method and showcasing its capabilities by solving the Biot system on a realistic brain geometry. 
\end{abstract}

%%Graphical abstract
% \begin{graphicalabstract}
% %\includegraphics{grabs}
% \end{graphicalabstract}

% %%Research highlights

%% Keywords
\begin{keyword}
%% keywords here, in the form: keyword \sep keyword
poroelasticity \sep total pressure \sep mixed finite element \sep cut finite element methods \sep brain mechanics 
%% PACS codes here, in the form: \PACS code \sep code

%% MSC codes here, in the form: \MSC code \sep code
%% or \MSC[2008] code \sep code (2000 is the default)

\MSC[2020]  65N30 \sep 65N85 \sep 65N12  \sep 76S05

\end{keyword}

\end{frontmatter}

%% Add \usepackage{lineno} before \begin{document} and uncomment 
%% following line to enable line numbers
%% \linenumbers

%% main text
%%

\section{Introduction}
The Biot system~\cite{biot1941general} of poroelasticity models the flow of a viscous fluid in deformable porous
media. It couples a momentum balance equation for the solid structure with a mass conservation equation for the fluid. The fluid pressure affects the solid stress, while the divergence of the solid displacement influences the fluid content, resulting in a fully coupled system.
The numerical solution of the Biot system has been an active area of research due to the numerous applications of the model in diverse fields, such as biomedicine, blood flow and organ tissue modeling, 
geosciences, groundwater flow, hydraulic fracturing, carbon sequestration, and landslides~\cite{Stoverud2014,stoverud2016poro,MalandrinoMoeendarbary2019,BukacYotovZakerzadehEtAl2015a,StoverudDarcisHelmigEtAl2012,GambolatiTeatiniBauEtAl2000,SalimzadehUsuiPalusznyEtAl2017,Wang2000,Cheng2016}.
For the finite element-based numerical solution of the Biot system, we refer the reader to two-field displacement--pressure formulations
in \cite{murad1992improved,Zik-MINI}, three-field
displacement--pressure--Darcy velocity formulations in 
\cite{phillips2007coupling1,Zik-stab,Yi-Biot-locking,Zik-nonconf,Lee-Biot-three-field,Yi-Biot-nonconf,phillips-DG,Whe-Xue-Yot-Biot}, three-field displacement--pressure--total pressure formulations in 
\cite{Lee-Mardal-Winther,oyarzua2016locking}, four-field fully mixed stress--displacement--pressure--Darcy velocity formulations in \cite{Yi-Biot-mixed}, and five-field weakly symmetric stress--displacement--rotation--pressure--Darcy velocity formulations in \cite{Lee-Biot-five-field,msfmfe-Biot}. 

The above-mentioned methods are based on finite element partitions
that conform to the domain boundary. This may present challenges in
applications with highly irregular geometries, e.g., brain or fracture
networks, or time-dependent geometries, e.g., blood vessels,
propagating fractures, or sedimentary basins. Unfitted discretization
methods such as the extended finite element method (XFEM)~\cite{MoesDolbowBelytschko1999}
are designed to remedy this issue by embedding the complex or
changing geometry domain into a static domain with simple geometry
partitioned by a regular grid. We refer to
~\cite{FriesBelytschko2000,ChessaBelytschko2003,HansboHansbo2002,Fumagalli2012,FumagalliScotti2014} for early work in this direction.
A major challenge in unfitted methods is to ensure their geometrical robustness so that
stability and accuracy are not compromised by small or arbitrarily cut elements.
As a remedy, the cut finite element method (CutFEM) \cite{Burman-etal-cutfem-IJNME,dePrenterVerhooselvanBrummelenEtAl2023} has emerged 
over the past 15 years 
as a powerful and general finite element methodology that provides a unified stabilization framework for the analysis of unfitted methods.
Similar to the XFEM approach, boundary and interface conditions are weakly
imposed using Nitsche's method
\cite{Nitsche1971,Burman-Hansbo-cutfem-Nitsche}
or the Lagrange multiplier method \cite{Burman-Hansbo-cutfem-LM}.
In addition, a so-called ghost penalty stabilization is 
incorporated into the discrete formulation which makes the resulting
CutFEM scheme amenable to a geometrically robust stability and error analysis.
Alternatively, strongly incorporated stabilization can be employed
through the redefinition of the finite element spaces
using aggregation techniques~\cite{JohanssonLarson2013,BadiaVerdugoMartin2018}. 
As a result, a wide range of problem classes
has been treated 
including, e.g.,
elliptic interface problems~\cite{BurmanZunino2012,
GuzmanSanchezSarkis2017,GuerkanMassing2019},
Stokes and Navier-Stokes type problems
\cite{BurmanHansbo2013,MassingLarsonLoggEtAl2013,BurmanClausMassing2015,
CattaneoFormaggiaIoriEtAl2014,MassingSchottWall2017,WinterSchottMassingEtAl2017,BadiaMartinVerdugo2018,GuzmanOlshanskii2016,DokkenJohanssonMassingEtAl2020,BurmanFreiMassing2022},
and complex multiphysics interface problems
~\cite{SchottRasthoferGravemeierEtAl2015,ClausKerfriden2019,FrachonZahedi2019,MassingLarsonLoggEtAl2015,BerreRognesMassing2024}.
As a natural application area,
unfitted finite element methods have also been proposed
for problems in porous media~\cite{FrachonHansboNilssonEtAl2024,LehrenfeldvanBeeckVoulis2023,HansboLarsonMassing2017},
possibly including discrete fracture networks~\cite{FlemischFumagalliScotti2016,Fumagalli2012,DAngeloScotti2012,FormaggiaFumagalliScottiEtAl2013,DelPraFumagalliScotti2017,BurmanHansboLarsonEtAl2018a}.
We refer to \cite{BordasBurmanLarsonEtAl2018,dePrenterVerhooselvanBrummelenEtAl2023}
and references therein for a comprehensive list of
applications.
An alternative approach is the shifted boundary
method \cite{SBM}, which applies modified boundary conditions on a
surrogate boundary consisting of faces of elements in the background
mesh. 

In this paper, we employ the CutFEM methodology for the solution of the
Biot system of poroelasticity, an approach that has so far received
relatively little attention in the literature. CutFEM for the Biot
consolidation model in a two-field displacement--pressure formulation
has been studied in \cite{CerroniRaduZuninoEtAl2019,Liu-etal}. In
\cite{CerroniRaduZuninoEtAl2019}, the method is applied for modeling
sedimentary basins with moving domains. The focus is on developing a
fixed-stress iterative coupling method and the analysis of the
convergence of the iteration. In \cite{Liu-etal}, CutFEM is applied to
the coupled Biot system. Extensive numerical studies are presented,
but the theoretical stability and convergence of the method are not
addressed. A CutFEM-based approach for coupling incompressible
fluid flow with poroelasticity is developed in \cite{Ager-etal}, where
numerical analysis is not presented.

\subsection{Novel contributions and outline of the paper}
The goal of this paper is to develop and analyse an unfitted
CutFEM-based formulation for the Biot system of poroelasticity using a
three-field displacement--pressure--total pressure formulation
\cite{Lee-Mardal-Winther,oyarzua2016locking}. The total pressure
formulation has gained significant popularity due to its robustness
with respect to the physical parameters. In particular, it avoids
locking when the Lam\'e parameter $\lambda \to \infty$ and pressure
oscillations when the hydraulic conductivity $K \to 0$. Our method is designed
to carry over this parameter robustness to the cut finite element
setting, while ensuring robustness with respect to the size of the cut
elements. The latter is achieved by utilizing suitably scaled ghost penalty
stabilization. %for all variables in the cut element region, some of
%which are suitably scaled with $\lambda$ or $K$.
We allow for both essential and natural boundary conditions for 
the solid and the
fluid, with the essential boundary conditions being weakly
enforced using Nitsche's method.

We develop stability and error analysis of the proposed method. The
norm of the fluid pressure has terms that are scaled by either $K$ or
$\lambda^{-1}$ and we keep track of the dependence of the bounds on
these parameters. As the finite elements for the displacement and
total pressure form a Stokes-stable pair, we employ and extend previous results
on CutFEM for the Stokes equations \cite{BurmanHansbo2013,
Massing-etal-cutfem-Stokes, GuzmanOlshanskii2016}. A key component in
the stability analysis is establishing an inf-sup condition over the
entire physical domain. 
The analysis in~\cite{GuzmanOlshanskii2016} for the Stokes equations
assumes Dirichlet boundary conditions for the velocity everywhere,
and thus a zero mean value of the pressure.
However, zero mean value for the total pressure
cannot be assumed in the presented Biot formulation,
as we need to consider both a perturbed saddle point problem as well as
mixed boundary conditions.
To deal with this issue, we combine the approach in
\cite{GuzmanOlshanskii2016} with an argument from
\cite{Massing-etal-cutfem-Stokes,MassingLarsonLoggEtAl2013}, 
to obtain a modified
displacement--total pressure inf-sup condition containing a pressure ghost penalty as
defect term.
This approach necessitates the construction of a special interpolant with a
certain orthogonality property on the physical boundary, which is done by a
patch-wise argument, extending the construction in
\cite{BeckerBurmanHansbo2009}. As a result, we are able to
establish stability and optimal-order error estimates for the method
with constants independent of the size of the cut elements. We present
extensive numerical results in two and three dimensions verifying the
stability and convergence properties of the method and its robustness
with respect to the physical parameters and how the physical domain 
cuts the background mesh.
Furthermore, we present an example using the method to model the flow in and
deformation of the brain, which illustrates its robustness for complex physical
applications with highly irregular three dimensional geometry and
parameters in the locking regime. 

The paper is structured as follows. In Section~\ref{sec:model-problem} we
introduce the Biot system and briefly motivate the total pressure formulation, 
followed by the presentation of the proposed CutFEM formulation in Section~\ref{sec:cutfem-biot}. 
Afterwards, we collect necessary inequalities, interpolation results and
present a detailed construction of the modified interpolant together with a brief
discussion of ghost penalty realizations in
Section~\ref{sec:preliminaries}. 
The main theoretical results, including stability, inf-sup conditions, and the final
a priori error estimate are derived in Section~\ref{sec:analysis}.
Finally, extensive numerical results are given in
Section~\ref{sec:numerics}, before we draw our conclusions in
Section~\ref{sec:conclusion}.

\section{Model problem}
\label{sec:model-problem}
Let $\Omega \subset \bbR^n$, $n = 2,3$, be an open bounded domain with boundary \(\Gamma\) of class \(C^2\). 
The quasi-static two field Biot's equations for a poroelastic material read 
\begin{align*}
-\mu \nabla \cdot \varepsilon (\bu) - \lambda \nabla \nabla \cdot \bu + \alpha \nabla p_F  &= \bf, \\
\alpha \frac{\partial}{\partial t} \nabla \cdot \bu + c_0 \frac{\partial}{\partial t} p_F - K \Delta p_F & = g,
\end{align*}
with the fundamental unknowns being the solid displacement $\bu$ and
the fluid pressure $p_F$. A main challenge with Biot's equations is
the large number of parameters (here assumed constant throughout the
domain) $\mu$, $\lambda$, $\alpha$, $c_0$, and $K$, denoting the two
Lam\'e parameters (for simplicity \(\mu\) scaled by 2), the Biot-Willis constant, the constrained specific
storage coefficient, and the hydraulic conductivity, respectively.  In
particular, locking may appear if $\lambda \gg \mu$ and pressure
oscillations may appear as $K\rightarrow 0$. 

To simplify the notation in the following, we focus only on the parameters
that cause numerical challenges. That is, let us  consider
$\alpha=1$, $c_0=1/\lambda$ and a backward Euler time-discretization with $\Delta t=1$. Then the above system reduces to
\begin{align*}
-\mu \nabla \cdot \varepsilon (\bu) - \lambda \nabla \nabla \cdot \bu + \nabla p_F  &= \bf, \\
-\nabla \cdot \bu - \frac{1}{\lambda} p_F + K \Delta p_F & = g.
\end{align*}
Here, the sign is flipped on the second equation to obtain symmetry,
and $g$ is altered to accommodate the sign change and source terms
resulting from the time-discretization.

To briefly motivate the total pressure formulation, we introduce the solid pressure $p_S = -\lambda \nabla \cdot \bu$, arriving at
\begin{align*}
-\mu \nabla \cdot \varepsilon (\bu) + \nabla p_S + \nabla p_F &= \bf,
\\
-\nabla \cdot \bu - \frac{1}{\lambda} p_S & = 0,
\\
-\nabla \cdot \bu   - \frac{1}{\lambda} p_F + K \Delta p_F & = g. 
\end{align*}
In the case $K\rightarrow 0$ and $\lambda\rightarrow\infty$ we have two pressures to be determined by $\nabla \cdot \bu$ and as such the system is degenerate.  
In the total pressure formulation an alternative pressure is introduced, namely $p_T = p_F + p_S$. 
The system then becomes 
\begin{align}
-\mu \nabla \cdot \varepsilon (\bu)  + \nabla p_T   &= \bf, 
\label{eq:biot-total-press-I}
\\
-\nabla \cdot \bu - \frac{1}{\lambda} p_T + \frac{1}{\lambda} p_F  &= 0, 
\label{eq:biot-total-press-II}
\\
 \frac{1}{\lambda} p_T  -  \frac{2}{\lambda} p_F + K \Delta p_F & = g. 
\label{eq:biot-total-press-III}
\end{align}
Here, the degeneracy is eliminated at least on the continuous level, 
although, numerical issues may be present for certain schemes
as $K\rightarrow 0$ and/or $\lambda \rightarrow \infty$. 
We remark that the total pressure formulation \cite{oyarzua2016locking, Lee-Mardal-Winther} 
is just 
one of many Biot formulations, with the advantage that regular inf-sup stable Stokes elements can be used for the displacement and the total pressure.
We also note that as for Stokes problem,  the case of pure Dirichlet  conditions results in an undetermined constant for the pressure if $\lambda=\infty$. For $\lambda$ large but not infinity
the consequence is one bad mode. Such considerations are dealt with in detail in \cite{Lee-Mardal-Winther}, and we will not focus on this case in the following. 

The system \eqref{eq:biot-total-press-I}--\eqref{eq:biot-total-press-III} 
is supplemented with appropriate boundary conditions, which, in accordance with \cite{Lee-Mardal-Winther}, we take to be
\begin{alignat}{3}
\bu &= \bu_D  & &\quad \mbox{ on } \Gamma_d, 
\label{eq:bc-u-diri-inhom}
\\
K \frac{\partial p_F}{\partial n} & = g_N & & \quad \mbox{ on } \Gamma_d,
\label{eq:bc-p_F-neum-inhom}
\\
(\mu \varepsilon (\bu) - p_T \bI) \cdot \bn &= \bsigma_N & & \quad \mbox{ on } \Gamma_s, 
\label{eq:bc-total-stress-inhom}
\\
p_F & = p_{F,D}  & &\quad \mbox{ on } \Gamma_s,
\label{eq:bc-p_F-diri-inhom}
\end{alignat}
where $\Gamma = \Gamma_d \cup \Gamma_s$ is a non-overlapping partition of the boundary.
Throughout the remaining paper, we will make use of the following
notation. The expression $a \lesssim(\gtrsim) \ b$ means that there
exists a constant $C > 0$ independent of $K$, $\lambda$, and the mesh size $h$ such
that $a \leqslant(\ge) \ C \,b$. We write $a \sim b$ if $a \lesssim b
\lesssim a$. Moreover, boldface symbols are used to denote vector
valued functions or spaces. 
For inner products and norms on the Lebesgue space \(L^2(\Omega)\) we use the notation \((\cdot,\cdot)_\Omega\) and \(\norm{\cdot}_\Omega\), respectively. We denote by \( H^m(\Omega)\) the Sobolev space with weak derivatives up to order \(m\) in \(L^2(\Omega)\) equipped with norm \(\norm{\cdot}_{m,\Omega}\). In addition, we use the following notation for an \(H^m(\Omega)\) space with vanishing trace on a part of the boundary \(\Gamma_X\subset \partial \Omega\): 
\begin{align*}
   H^m_{\Gamma_X,0}(\Omega) = \{ p \in H^m(\Omega) \mid p|_{\Gamma_X} = 0 \}.
\end{align*}
We define the trace space  \(H^{\onehalf}(\partial \Omega) = H^1(\Omega)|_{\partial \Omega}\), with norm \(\|v\|_{H^{\onehalf}(\partial \Omega)} = \inf_{w\in H^1(\Omega), w|_{\partial \Omega} = v}   \|w\|_{H^{1}( \Omega)} \), and correspondingly the trace space on part of the boundary, \(H^{\onehalf}(\Gamma_X) = \{ v|_{\Gamma_X} : v \in H^{\onehalf}(\partial \Omega)\}\), with norm \(\|v\|_{H^{\onehalf}(\Gamma_X)} = \inf_{w\in H^1(\Omega), w|_{\Gamma_X} = v}   \|w\|_{H^{1}( \Omega)} \), and \(\widetilde{H}^{-\onehalf}(\Gamma_X) = [H^{\onehalf}(\Gamma_X)]^* \) as its dual space. We assume that \(\bu_D \in [H^{\onehalf}(\Gamma_d)]^n\), \(p_{F,D} \in H^{\onehalf}(\Gamma_s)\), \(\bsigma_N \in [\widetilde{H}^{-\onehalf}(\Gamma_s)]^n\), and \(g_N \in \widetilde{H}^{-\onehalf}(\Gamma_d)\). For simplicity, we denote the dual pairing between \(\widetilde{H}^{-\onehalf}(\Gamma_X)\) and \(H^{\onehalf}(\Gamma_X)\) by \(( \cdot, \cdot )_{\Gamma_X}\).
Set operations involving \(\cP_h\) should be understood as element-wise operations, e.g., \(\cP_h\cap U = \{P \cap U  \mid P \in \cP_h \}\), such that for inner products we have \((\cdot,\cdot)_{\cP_h\cap U} = \sum_{P\in \cP_h} (\cdot,\cdot)_{P\cap U}\).

\section{A cut finite element method for Biot's consolidation model}
\label{sec:cutfem-biot}
We let $\widetilde{\cT}_h$ be a quasi uniform background mesh, consisting of shape regular closed simplices or rectangles/cuboids $T$ covering $\overline{\Omega}$, see Figure \ref{fig:cutfem-set-explanations}. 
The \emph{active mesh} $\cT_h$ is then defined
as the collection of elements which intersect $\Omega$,
\begin{gather*}
\label{eq:active_mesh}
\cT_{h} = \{ T \in \widetilde{\cT }_{h}  \mid  T \cap \Omega  \neq \emptyset  \},
\intertext{which we further decompose into}
\cT_h^{\Gamma} = \{ T \in \cT_{h}  \mid  T \cap \Gamma  \neq \emptyset  \},
\quad
\cT_h^i =  \cT_h \setminus \cT_h^{\Gamma}.
\label{eq:interior_bundary_mesh}
\end{gather*}
The set of interior facets associated with $\cT_h$ is denoted by
\begin{equation*}
    \cF_{h}^{i}
    =
    \{ F = T^{+} \cap T^{-}  \mid  T^{+}, T^{-} \in \cT_{h}
    \text{ and } 0 < |F|_{n-1} < \infty \}.
\end{equation*}
Here, $|F|_{n-1}$ is the $n-1$-dimensional Hausdorff measure of the facet $F$. In the following we will use $\cT_h$, $\cT_h^i$, and $\cT_h^\Gamma$ to also denote the domains consisting of the union of corresponding mesh elements. 

\begin{figure}[t]
    \centering
    \includegraphics[trim = {0cm 0cm 0cm 0cm}, clip, width=0.4\textwidth]{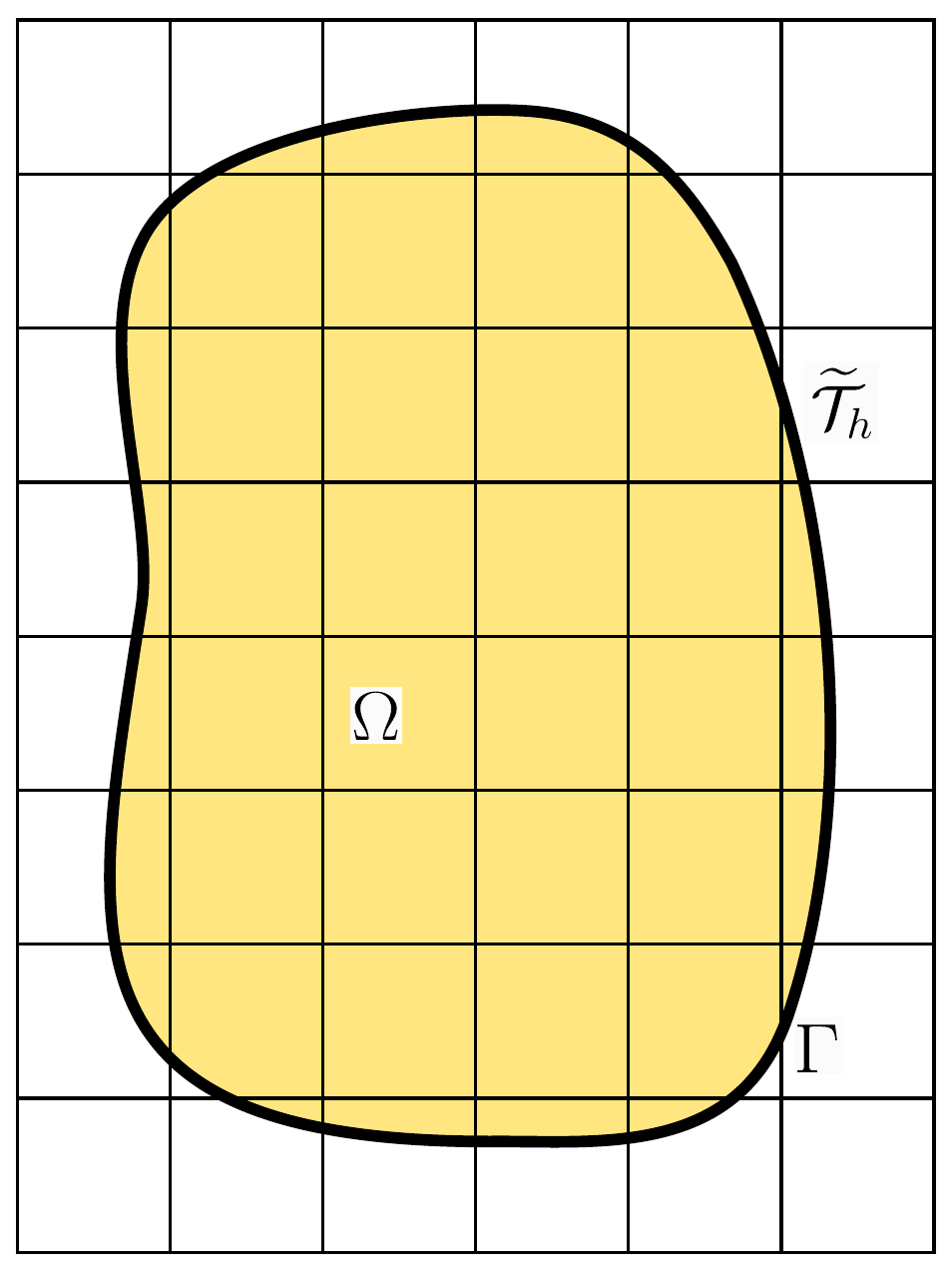}
    \includegraphics[trim = {0cm 0cm 0cm 0cm}, clip, width=0.4065\textwidth]{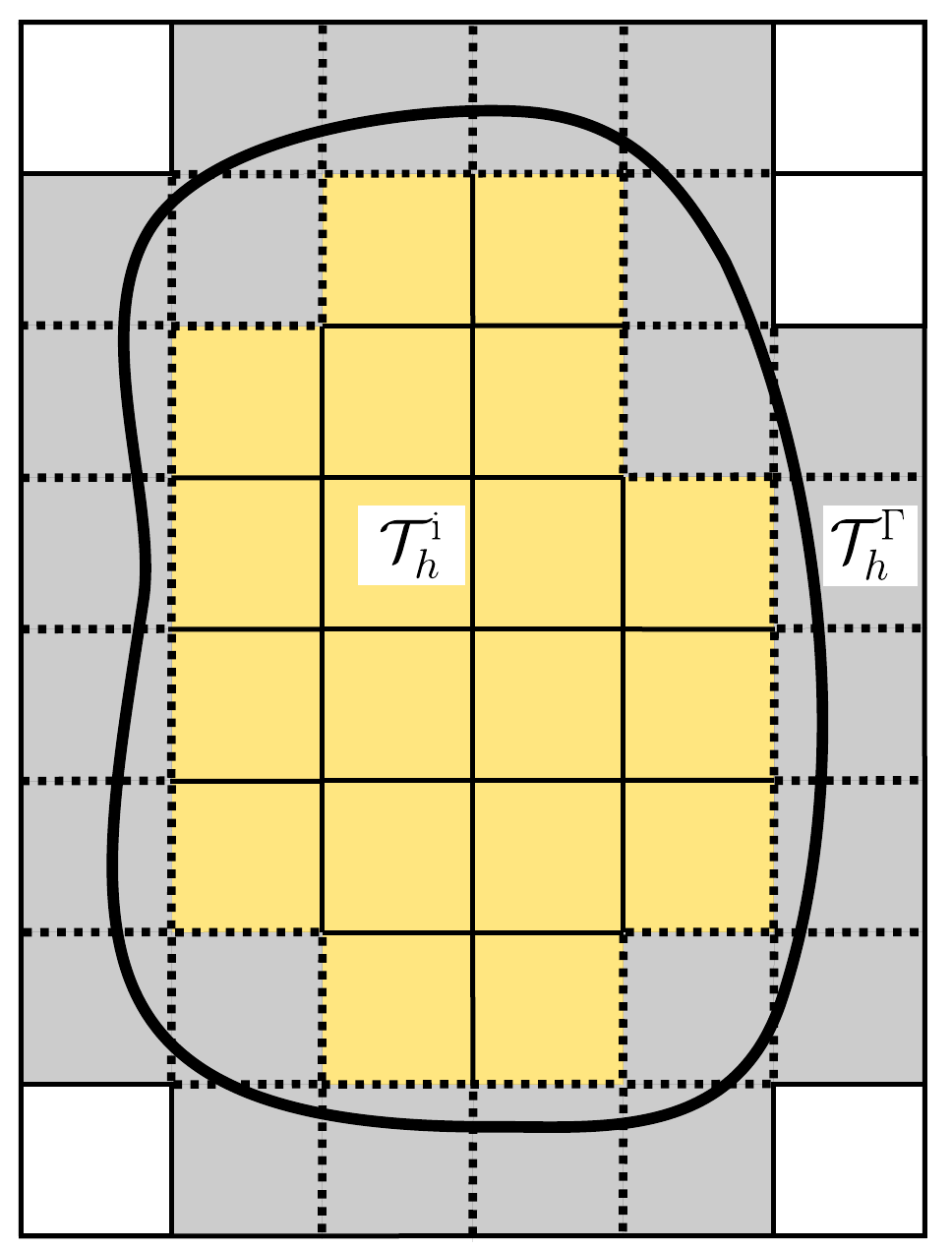}
    \caption{Example of an unfitted mesh, with the domain $\Omega$ inside the boundary $\Gamma$.} 
    \label{fig:cutfem-set-explanations}
\end{figure}

We consider stable Stokes elements to discretize displacement and total pressure. For simplicity we focus on the (possibly higher order) Taylor-Hood elements on simplices. Alongside our presentation we will also briefly comment on possible extensions of
the proposed cut finite element method to other inf-sup stable elements in the spirit of
\cite{GuzmanOlshanskii2016} and the use of strong stabilizations~\cite{BadiaVerdugoMartin2018}.
On the active mesh, we define the discrete base function space to be
\begin{equation*}
    X_h^k \coloneqq \bbP_c^k(\cT_h) = \bbP^k(\cT_h) \cap C^0(\cT_h), \quad\text{where}\quad \bbP^k(\cT_h) \coloneqq \bigoplus_{T\in T_h} \bbP^k(T),
\end{equation*}
where $\bbP^k(T)$ denotes the set of all polynomials up to order $k$ defined on $T$. 
Then, for $k\geqslant 2$, \( l\geqslant 1\),  the discrete function spaces for the displacement, total pressure, and fluid pressure are defined, respectively, as
\begin{align*}
    \bV_h = [X_h^k]^n, \quad Q_{T,h} = X_h^{k-1}, \quad Q_{F,h} = X_h^{l}.
\end{align*}
Note that the function spaces are now defined on the entire active mesh, instead of just the physical domain.

To derive a cut finite element method for Biot's equations, we first need to derive a preliminary 
formulation where all the relevant boundary conditions are enforced weakly.
In this case we do not need to distinguish between the discrete trial and test function spaces.
Note that~\eqref{eq:bc-total-stress-inhom} and ~\eqref{eq:bc-p_F-neum-inhom} are Neumann conditions
for the displacement and fluid pressure, respectively, and thus
will be naturally enforced weakly in the discrete formulation of ~\eqref{eq:biot-total-press-I} and ~\eqref{eq:biot-total-press-III}, respectively.
The Dirichlet conditions \eqref{eq:bc-u-diri-inhom} and
\eqref{eq:bc-p_F-diri-inhom} on the other hand are typically strongly
enforced in the continuous formulation, and thus will be enforced
weakly in the discrete formulation using Nitsche's
method~\cite{Nitsche1971}. 

Starting from \eqref{eq:biot-total-press-I}, we multiply by a discrete test function $\bv \in \bV_h$ and integrate by parts over $\Omega$, incorporating 
\eqref{eq:bc-total-stress-inhom} naturally on $\Gamma_s$ and using Nitsche's method on $\Gamma_d$ to incorporate
\eqref{eq:bc-u-diri-inhom} to see that, $\forall \bv_h \in \bV_h$,
\begin{align*}
& (\mu\varepsilon(\bu), \varepsilon(\bv_h))_{\Omega} 
-(\mu\varepsilon(\bu)\cdot \bn, \bv_h)_{\Gamma_d}
-(\bu,\mu \varepsilon(\bv_h)\cdot \bn)_{\Gamma_d}
+ \dfrac{\gamma_u}{h}\mu(\bu, \bv_h)_{\Gamma_d}
-(p_{T}, \nabla\cdot \bv_h)_{\Omega}
+ (p_{T}, \bv_h \cdot \bn)_{\Gamma_d}
\\
& \quad = (\bf, \bv_h)_{\Omega} + (\bsigma_N, \bv_h)_{\Gamma_s} 
-(\bu_D, \mu\varepsilon(\bv_h)\cdot \bn)_{\Gamma_d}
+ \dfrac{\gamma_u}{h}\mu(\bu_D, \bv_h)_{\Gamma_d}.
 \end{align*}
Turning to \eqref{eq:biot-total-press-II}, we multiply by a discrete test function $q_{T,h} \in Q_{T,h}$
and add the consistent term $\left((\bu - \bu_D) \cdot \bn, q_{T,h}\right)_{\Gamma_d}$ for symmetry reasons, leading to, $\forall q_{T,h} \in Q_{T,h}$,
\begin{align*}
-(\nabla \cdot \bu, q_{T,h})_{\Omega} 
+ (\bu \cdot \bn, q_{T,h})_{\Gamma_d}
 - \frac{1}{\lambda} (p_{T}, q_{T,h})_{\Omega} +  \frac{1}{\lambda} (p_{F}, q_{T,h})_{\Omega} 
&= (\bu_D \cdot \bn, q_{T,h})_{\Gamma_d}.
\end{align*}
Finally, for \eqref{eq:biot-total-press-III} we multiply by a discrete test function $q_{F,h} \in Q_{F,h}$ 
and integrate by parts over $\Omega$. This time, we incorporate \eqref{eq:bc-p_F-neum-inhom} naturally on $\Gamma_d$, while \eqref{eq:bc-p_F-diri-inhom} is enforced weakly using Nitsche's method, obtaining, $\forall q_{F,h} \in Q_{F,h}$,
\begin{align*}
    \nonumber
&\frac{1}{\lambda} (p_{T}, q_{F,h})_{\Omega} 
    - \frac{2}{\lambda} (p_{F}, q_{F,h})_{\Omega} 
- (K \nabla p_{F}, \nabla q_{F,h})_{\Omega} \\
    \nonumber
    & \qquad
+ (K \nabla p_{F} \cdot \bn, q_{F,h})_{\Gamma_s}
+ (p_{F}, K \nabla q_{F,h} \cdot \bn )_{\Gamma_s}
- \dfrac{\gamma_p}{h} K (p_{F}, q_{F,h})_{\Gamma_s}
\\
&\quad
= (g, q_{F,h})_{\Omega}
- (g_N, q_{F,h})_{\Gamma_d} 
+ (p_{F,D}, K \nabla q_{F,h} \cdot \bn )_{\Gamma_s}
- \dfrac{\gamma_p}{h} K (p_{F,D}, q_{F,h})_{\Gamma_s}.    
\end{align*}
Next, set
\begin{align}
a_{1,h}(\bu, \bv_h)
&=
(\mu\varepsilon(\bu), \varepsilon(\bv_h))_{\Omega} 
-(\mu\varepsilon(\bu)\cdot \bn, \bv_h)_{\Gamma_d}
-(\bu, \mu\varepsilon(\bv_h)\cdot \bn)_{\Gamma_d}
+ \dfrac{\gamma_u}{h}\mu(\bu, \bv_h)_{\Gamma_d},
\label{defn-a1}\\
b_{1,h}(\bv, q_{T}) 
&=
-( \nabla\cdot \bv, q_{T})_{\Omega}
+ (\bv \cdot \bn, q_{T})_{\Gamma_d},
\label{defn-b1}\\
b_{2,h}(q_{F}, q_{T}) 
&= \dfrac{1}{\lambda}(q_{F}, q_{T})_{\Omega},
\label{defn-b2}\\
a_{2,h}( p_{T}, q_{T,h}) 
&= \dfrac{1}{\lambda}(p_{T}, q_{T,h})_{\Omega},
\label{defn-a2}\\
a^1_{3,h}(p_{F}, q_{F,h})
&=
(K \nabla p_{F}, \nabla q_{F,h})_{\Omega} 
- (K \nabla p_{F} \cdot \bn, q_{F,h})_{\Gamma_s}
- (p_{F}, K \nabla q_{F,h} \cdot \bn )_{\Gamma_s}
+ \dfrac{\gamma_p}{h} K (p_{F}, q_{F,h})_{\Gamma_s},
\label{defn-a3-1}\\
a^2_{3,h}(p_{F}, q_{F,h})
&=
\frac{2}{\lambda} (p_{F}, q_{F,h})_{\Omega}, \label{defn-a3-2}\\
a_{3,h}(p_{F}, q_{F,h})
&=
a^1_{3,h}(p_{F}, q_{F,h}) + a^2_{3,h}(p_{F}, q_{F,h}) \label{defn-a3}\\ 
L_1(\bv_h) 
&= 
(\bf, \bv_h)_{\Omega} + (\bsigma_N, \bv_h)_{\Gamma_s} 
-(\bu_D, \mu\varepsilon(\bv_h)\cdot \bn)_{\Gamma_d}
+ \dfrac{\gamma_u}{h}\mu(\bu_D, \bv_h)_{\Gamma_d},
\label{defn-L1}\\
L_2(q_{T,h})
&= (\bu_D \cdot \bn, q_{T,h})_{\Gamma_d}, \label{defn-L2}\\
L_3(q_{F,h})
&= (g, q_{F,h})_{\Omega} - (g_N, q_{F,h})_{\Gamma_d} + (p_{F,D}, K \nabla q_{F,h} \cdot \bn )_{\Gamma_s} - \dfrac{\gamma_p}{h} K (p_{F,D}, q_{F,h})_{\Gamma_s}. \label{defn-L3}
\end{align}
Note that if the solution $(\bu, p_{T}, p_{F})$ 
to \eqref{eq:biot-total-press-I}--\eqref{eq:bc-p_F-diri-inhom} is regular enough,
it satisfies the discrete weak formulation 
\begin{subequations} \label{eq:biot-total-press-weak-disc}
\begin{alignat}{4}
    &a_{1,h}(\bu, \bv_h) & + & \ b_{1,h}(\bv_h, p_{T}) & &= L_1(\bv_h), &  
    &\quad \forall \bv_h \in \bV_h, 
    \label{eq:biot-total-press-I-weak-disc}
    \\
    &b_{1,h}(\bu, q_{T,h}) \ & - & \ a_{2,h}(p_{T}, q_{T,h})  \ + & b_{2,h}(p_{F}, q_{T,h}) &= L_2(q_{T,h}), & &\quad \forall q_{T,h} \in Q_{T,h},
    \label{eq:biot-total-press-II-weak-disc}
    \\
    & & & \ b_{2,h}(q_{F,h}, p_{T}) \ - & \ a_{3,h}(p_{F}, q_{F,h}) &= L_3(q_{F,h}), & & \quad \forall q_{F,h} \in Q_{F,h}. 
    \label{eq:biot-total-press-III-weak-disc}
\end{alignat}
\end{subequations}

We introduce the corresponding scaled norms
\begin{align}
  \normv{\bv}^2 &= \mu\|\varepsilon( \bv)\|_{\Omega}^2 + \gamma_u \mu \|h^{-\onehalf} \bv\|_{\Gamma_d}^2, \label{Vh-norm} \\
 \normt{q_{T}}^2 &= \mu^{-1} \|q_{T}\|_{\Omega}^2, \\
 \normf{q_{F}}^2 &=  K \|\nabla q_{F}\|_{\Omega}^2  + \gamma_p K\|h^{-\onehalf} q_{F}\|_{\Gamma_s}^2 + \lambda^{-1} \|q_{F}\|_{\Omega}^2. \label{QFh-norm} 
\end{align}

\begin{remark}
The functionals defined in \eqref{Vh-norm} and \eqref{QFh-norm} are indeed norms, using Korn's inequality \cite[(1.19)]{Brenner-Korn} and Poincar\'e inequality \cite[(1.10)]{Brenner-Poincare}.
\end{remark}

In addition, we will also need the following norms, in which the error estimates will be obtained:
%for \( u \in H^2(\cT_{h})+\bV_{h}, p_T \in H^2(\cT_{h})+Q_{T,h}, p_F \in H^2(\cT_{h})+Q_{F,h}  \)
\begin{align}
  \normvast{\bv}^2 &= \normv{\bv}^2+ \mu\|h^{\onehalf} \bn \cdot \nabla \bv  \|_{\Gamma_d }^2, \label{eq:normstar_1}  \\
  \normtast{q_{T}}^2&= \normt{q_{T}}^2 +  \mu^{-1}\|h^{\onehalf} q_{T}  \|_{\Gamma_d}^2,   \\
  \normfast{q_{F}}^2 &= \normf{q_{F}}^2+ K\|h^{\onehalf} \bn \cdot \nabla q_{F}  \|_{\Gamma_s}^2, \label{eq:normstar_3}  \\
  \| (\bv,q_{T},q_{F} )\|^2_{*} &=  \normvast{\bv}^2 +\normtast{q_{T}}^2+ \normfast{q_{F}}^2.
\end{align}

The idea of the ghost penalty approach is to augment the formulation~\eqref{eq:biot-total-press-I-weak-disc}--\eqref{eq:biot-total-press-III-weak-disc} above with certain stabilization forms $g_{1,h}(\cdot, \cdot)$, $g_{2,h}(\cdot, \cdot)$, and $g_{3,h}(\cdot, \cdot) = g_{3,h}^1(\cdot, \cdot) + g_{3,h}^2(\cdot, \cdot)$ with associated seminorms $|\cdot|_{g_{i,h}}^2 = g_{i,h}(\cdot, \cdot)$, $i = 1,2,3$, acting in the vicinity of the embedded boundary so that the norms \eqref{Vh-norm}--\eqref{QFh-norm} are extended to the domain $\cT_h$ of the entire active mesh $\cT_h$, i.e.,
\begin{align}
  & \tnormv{\bv_h}^2 = \normv{\bv_h}^2 + |\bv_h|_{g_{1,h}}^2 \sim
  \mu\|\varepsilon( \bv_h)\|_{\cT_h}^2
  + \gamma_u\mu \|h^{-\onehalf} \bv_h\|_{\Gamma_d}^2, \label{defn-norm-Vh}\\
& \tnormt{q_{T,h}}^2 =\normt{q_{T,h}}^2  + |q_{T,h}|_{g_{2,h}}^2  \sim  \mu^{-1}\|q_{T,h}\|_{\cT_h}^2, \\
  & \tnormf{q_{F,h}}^2 = \normf{q_{F,h}}^2 + |q_{F,h}|_{g_{3,h}}^2   \sim \lambda^{-1} \|q_{F,h}\|_{\cT_h}^2 + K \|\nabla q_{F,h}\|_{\cT_h}^2
  + \gamma_p K\|h^{-\onehalf} q_{F,h}\|_{\Gamma_s}^2,  \label{defn-norm-Qf} \\
&\tn{(\bv_{h},q_{T,h},q_{F,h})} \tn_h^2 = \tnormv{\bv_h}^2 + \tnormt{q_{T,h}}^2 + \tnormf{q_{F,h}}^2. \label{defn-tri-norm}
\end{align}

To this end, we replace the  bilinear forms on the diagonal by their ghost-penalty enhanced counterparts
\begin{align*}
A_{1,h}(\bu_h, \bv_h) &= a_{1,h}(\bu_h, \bv_h)  + g_{1,h}(\bu_h, \bv_h),
\\
A_{2,h}(p_{T,h}, q_{T,h}) &=  a_{2,h}(p_{T,h}, q_{T,h}) + g_{2,h}(p_{T,h}, q_{T,h}),
\\
A^i_{3,h}(p_{F,h}, q_{F,h}) &=  a^i_{3,h}(p_{F,h}, q_{F,h}) + g^i_{3,h}(p_{F,h}, q_{F,h}), \ i = 1,2, \\
A_{3,h}(p_{F,h}, q_{F,h}) & = A^1_{3,h}(p_{F,h}, q_{F,h}) + A^2_{3,h}(p_{F,h}, q_{F,h}),
\end{align*}
resulting in the ghost penalty cut finite element method: find $(\bu_h, p_{T,h}, p_{F,h}) \in \bV_h \times Q_{T,h} \times Q_{F,h}$ such that
\begin{subequations}\label{eq:biot-total-press-weak-disc-ghost}
\begin{alignat}{4}
    &A_{1,h}(\bu_h, \bv_h) & + & \,b_{1,h}(\bv_h, p_{T,h}) & &= L_1(\bv_h), &  
    &\quad \forall \bv_h \in \bV_h,     \\
    &b_{1,h}(\bu_h, q_{T,h}) & \, - \, &A_{2,h}(p_{T,h}, q_{T,h})  & +  \,b_{2,h}(p_{F, h}, q_{T,h}) &= L_2(q_{T,h}), & &\quad \forall q_{T,h} \in Q_{T,h},
    \\
    & & &b_{2,h}(q_{F,h}, p_{T, h}) & \, - \, A_{3,h}(p_{F,h}, q_{F,h}) &= L_3(q_{F,h}), & & \quad \forall q_{F,h} \in Q_{F,h}. 
\end{alignat}
\end{subequations}
We define the unstabilized bilinear form
\begin{align}
  \label{eq:Bh-def}
  B_{h}&((\bu_h,p_{T,h},p_{F,h}),(\bv_h,q_{T,h},q_{F,h})) = a_{1,h}(\bu_h, \bv_h)  + b_{1,h}(\bv_h, p_{T,h}) + b_{1,h}(\bu_h, q_{T,h})  \nonumber\\ 
  &-  a_{2,h}(p_{T,h}, q_{T,h}) +  b_{2,h}(p_{F, h}, q_{T,h}) + b_{2,h}(q_{F,h}, p_{T, h})
  - a_{3,h}(p_{F,h}, q_{F,h}),
\end{align}
the ghost-penalty bilinear form
\begin{align}\label{ghost-form}
  G_{h}((\bu_h,p_{T,h},p_{F,h}),(\bv_h,q_{T,h},q_{F,h})) =  &g_{1,h}(\bu_h, \bv_h)  + g_{2,h}(p_{T,h}, q_{T,h}) + g^1_{3,h}(p_{F,h}, q_{F,h}) + g^2_{3,h}(p_{F,h}, q_{F,h}),  
\end{align}
and the total bilinear form 
\begin{align}
  A_{h}((\bu_h,p_{T,h},p_{F,h}),(\bv_h,q_{T,h},q_{F,h})) = &B_{h}((\bu_h,p_{T,h},p_{F,h}),(\bv_h,q_{T,h},q_{F,h}))
  + G_{h}((\bu_h,p_{T,h},p_{F,h}),(\bv_h,q_{T,h},q_{F,h})).
  \label{eq:total_bilinear_form}
\end{align}
Method \eqref{eq:biot-total-press-weak-disc-ghost} can be rewritten as: find $(\bu_h, p_{T,h}, p_{F,h}) \in \bV_h \times Q_{T,h} \times Q_{F,h}$ such that for all $(\bv_h, q_{T,h}, q_{F,h}) \in \bV_h \times Q_{T,h} \times Q_{F,h}$,
\begin{equation}\label{method-v2}
A_{h}((\bu_h,p_{T,h},p_{F,h}),(\bv_h,q_{T,h},q_{F,h})) = L_1(\bv_h) + L_2(q_{T,h}) + L_3(q_{F,h}).
\end{equation}

\section{Useful inequalities, approximation properties and ghost penalty design}
\label{sec:preliminaries}
To prepare for the stability and error analysis in Section~\ref{sec:analysis}, we here collect some useful
inequalities, discuss the construction and properties of various quasi-interpolation
operators, and review the design of suitable ghost penalties for the Biot problem.

\subsection{Trace inequalities and inverse estimates} 
We recall the local trace inequalities for a function $v \in H^1(T)$,
\begin{align}
  \| v \|_{\partial T} &\lesssim h_T^{-\onehalf} \norm{v}_T + h_T^{\onehalf} \norm{\nabla v}_T, \quad \forall T \in \cT_h, \\
  \| v \|_{\Gamma \cap T} &\lesssim h_T^{-\onehalf} \norm{v}_T + h_T^{\onehalf} \norm{\nabla v}_T, \quad \forall T \in \cT_h \label{eq:trace-ineq-2}.
\end{align}
In addition, we have the following inverse estimates for a discrete function $v_h \in \bbP^k(T)$:
\begin{align}
  \label{eq:inverse-estimates-I}
  \norm{\nabla v_h}_T &\lesssim h_T^{-1} \norm{v_h}_T.
\end{align}
\subsection{Quasi-interpolation operators}
To obtain the required interpolation operator, we first define the polynomial space \(V_h = X_h^{k}\) and let $\pi^{*}_{h}:L^2(\cT_h) \rightarrow V_h$ be the standard Scott--Zhang interpolation operator \cite{Scott-Zhang},
which for all \(T \in \cT_{h}\) satisfies the stability and interpolation error estimates
\begin{alignat}{3}
| \pi^{*}_{h} v |_{r,T} &\lesssim \| v \|_{r,\cP(T)},
\quad &0\leqslant r &\leqslant k + 1,
 \label{SZ-1} \\
\| v - \pi^{*}_{h} v \|_{r,T} &\lesssim h^{s-r}| v |_{s,\cP(T)},
\quad &0\leqslant r \leqslant s &\leqslant k + 1,
 \label{SZ-2} \\
\| v - \pi^{*}_{h} v \|_{r,\Gamma\cap T} &\lesssim h^{s-r-\onehalf}| v |_{s,\cP(T)}, 
\quad & 0\leqslant r + \frac{1}{2} \leqslant s &\leqslant k + 1, \label{SZ-3}
\end{alignat}
where $\cP(T)$ is the patch of neighbors of element $T$, meaning the domain including all elements sharing at least one vertex with $T$. We further
define $\pi_{h}:L^2(\Omega) \rightarrow V_{h}$ as
\begin{equation}
  \label{eq:ext-interpolant-def}
\pi_{h} v = \pi^{*}_{h} \cE v,
\end{equation}
where \(\cE :H^s(\Omega) \rightarrow H^s(\bbR^n)\), $s\geqslant 0$, is a bounded extension operator such that
\begin{equation} \label{stab-ext}
\| \cE u \|_{s,\bbR^n} \lesssim\| u \|_{s,\Omega},
\end{equation}
see \cite{Stein1970}. Estimates \eqref{SZ-1}--\eqref{SZ-3} imply
\begin{alignat}{3}
\| \pi_{h} v \|_{r,T} &\lesssim \| \cE v \|_{r,\cP(T)},
\quad &0\leqslant r &\leqslant k + 1,
 \label{interp-stab} \\
\| v - \pi_{h} v \|_{r,T\cap\Omega} &\lesssim h^{s-r}| \cE v |_{s,\cP(T)},
\quad & 0\leqslant r \leqslant s &\leqslant k + 1,
 \label{interp-T} \\
\| v - \pi_{h} v \|_{r,\Gamma\cap T} &\lesssim h^{s-r-\onehalf}| \cE v |_{s,\cP(T)}, 
\quad & 0\leqslant r + \frac{1}{2} \leqslant s &\leqslant k + 1, \label{interp-G}
\end{alignat}
The above bounds, together with the stability of the extension operator \( \cE \) \eqref{stab-ext}, imply the following interpolation estimates for the norms \eqref{eq:normstar_1}--\eqref{eq:normstar_3}.

\begin{lemma} [Interpolation estimates] Assume that \( \bv \in [H^r(\Omega)]^n \), $r \geqslant 2$, \(q_{T} \in H^s(\Omega) \), $s \geqslant 1$, \(  q_{F} \in H^t(\Omega)\), $t \geqslant 2$, and let \( \pi_{h} \bv \in [X_h^k]^n\),  \( \pi_{h} q_T \in X_h^m \),  \( \pi_{h} q_F \in  X_h^l \). Then for \(\bar{r} = \min\{k+1,r\}\), \(\bar{s} = \min\{m+1,s\}\), and \(\bar{t} = \min\{l+1,t\}\) the following estimates hold:
\begin{subequations} \label{eq:energy_interpolation}
\begin{align}
\normvast{\bv - \pi_{h} \bv} &\lesssim \mu^{\onehalf} h^{\bar{r}-1}  |\bv|_{\bar{r},\Omega}, \\
  \normtast{q_{T} - \pi_{h} q_T} &\lesssim \mu^{-\onehalf} h^{\bar{s}}  |q_{T} |_{\bar{s},\Omega}, \\
  \normfast{q_{F} - \pi_{h} q_F} &\lesssim \left( K ^{\onehalf} +\lambda^{-\onehalf} h \right)  h^{\bar{t}-1} |q_{F}|_{\bar{t},\Omega}.
\end{align}
\end{subequations}
\end{lemma}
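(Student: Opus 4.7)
The plan is to decompose each of the three star-norms into its volume and boundary contributions and bound each piece separately by combining the standard interpolation estimates \eqref{interpolation_omega}--\eqref{interpolation_gamma} with the local trace inequality \eqref{eq:trace-ineq-2}. Since all three cases follow the same template, the arguments are essentially parallel, with only the scaling factors differing.

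First, I would treat the displacement estimate. By definition
\[
\normvast{\bv - \pi_{h}\bv}^2 = \mu\|\varepsilon(\bv-\pi_h\bv)\|_{\Omega}^2 + \gamma_u\mu\|h^{-\onehalf}(\bv-\pi_h\bv)\|_{\Gamma_d}^2 + \mu\|h^{\onehalf}\bn\cdot\nabla(\bv-\pi_h\bv)\|_{\Gamma_d}^2 .
\]
For the symmetric-gradient volume term, I bound $\|\varepsilon(\cdot)\|_\Omega \le \|\nabla(\cdot)\|_{\cT_h}$ and invoke \eqref{interpolation_omega} with $r=1$, $s=\bar r$. For the Nitsche boundary term, I apply the trace inequality \eqref{eq:trace-ineq-2} element-wise to the interpolation error over $\cT_h^\Gamma$, which produces the two volume contributions $h^{-1}\|\bv - \pi_h\bv\|^2_{\cT_h^\Gamma}+h\|\nabla(\bv-\pi_h\bv)\|^2_{\cT_h^\Gamma}$; both are then estimated via \eqref{interpolation_omega}, and the resulting $h^{2(\bar r-1)}$ survives after multiplication by $h^{-1}$. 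For the normal-gradient term, I apply the same trace inequality to $\nabla(\bv - \pi_h\bv)$ element-wise; the contribution $h\|\nabla(\bv-\pi_h\bv)\|_{\Gamma_d}^2$ is controlled by $\|\nabla(\bv-\pi_h\bv)\|^2_{\cT_h^\Gamma}+h^2\|D^2(\bv-\pi_h\bv)\|^2_{\cT_h^\Gamma}$, and once more \eqref{interpolation_omega} with $r=1,2$ yields the $h^{2(\bar r-1)}$ factor. Taking the square root delivers the claimed displacement bound.

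Second, for the total pressure I split
\[
\normtast{q_T - \pi_h q_T}^2 = \|q_T - \pi_h q_T\|_\Omega^2 + \|h^{\onehalf}(q_T - \pi_h q_T)\|_{\Gamma_d}^2 .
\]
The volume term is immediate from \eqref{interpolation_omega} with $r=0$, $s=\bar s$. For the boundary term, the trace inequality \eqref{eq:trace-ineq-2} on $\cT_h^\Gamma$ gives $h\|q_T-\pi_h q_T\|^2_{\Gamma_d}\lesssim \|q_T-\pi_h q_T\|^2_{\cT_h^\Gamma}+h^2\|\nabla(q_T-\pi_h q_T)\|^2_{\cT_h^\Gamma}$, so both \eqref{interpolation_omega} entries combine to $h^{2\bar s}|q_T|_{\bar s}^2$. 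The fluid pressure estimate proceeds identically: treat separately the four contributions $K\|\nabla(\cdot)\|_\Omega^2$, $\gamma_p K\|h^{-\onehalf}(\cdot)\|_{\Gamma_s}^2$, $\lambda^{-1}\|(\cdot)\|_\Omega^2$, and $K\|h^{\onehalf}\bn\cdot\nabla(\cdot)\|_{\Gamma_s}^2$; applying \eqref{interpolation_omega} and \eqref{eq:trace-ineq-2} as above, the $K$-scaled pieces give $K\,h^{2(\bar t - 1)}|q_F|_{\bar t}^2$ while the $\lambda^{-1}$-scaled volume piece yields $\lambda^{-1}h^{2\bar t}|q_F|_{\bar t}^2=\lambda^{-1}h^2\cdot h^{2(\bar t-1)}|q_F|_{\bar t}^2$; collecting factors produces $(K^{\onehalf}+\lambda^{-\onehalf}h)\,h^{\bar t-1}$.

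The proof is essentially bookkeeping and there is no deep obstacle. The only point requiring some care is the application of the trace inequality \eqref{eq:trace-ineq-2} to $\nabla(\bv-\pi_h\bv)$ and $\nabla(q_F-\pi_h q_F)$; this requires the regularity $\bar r, \bar t \ge 2$, which is compatible with the stated assumption. Note that \eqref{eq:trace-ineq-2} is uniform over all cut elements independently of how $\Gamma$ intersects $\cT_h$, so the interpolation bounds are automatically geometrically robust, which is exactly the property needed for the cut finite element analysis in the next section.
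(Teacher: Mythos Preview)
Your proposal is correct and follows exactly the approach implied by the paper, which simply states that the lemma follows from the basic estimates \eqref{interpolation_omega}--\eqref{interpolation_gamma} without spelling out the details. Your term-by-term decomposition using the trace inequality \eqref{eq:trace-ineq-2} for the boundary contributions is the standard and intended argument, and your remark about the implicit regularity requirement $\bar r,\bar t\ge 2$ needed to make the normal-gradient boundary terms meaningful is a valid observation.
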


\begin{figure}[t]
    \centering
    \includegraphics[trim = {0cm 0cm 0cm 13cm}, clip, width=0.4\textwidth]{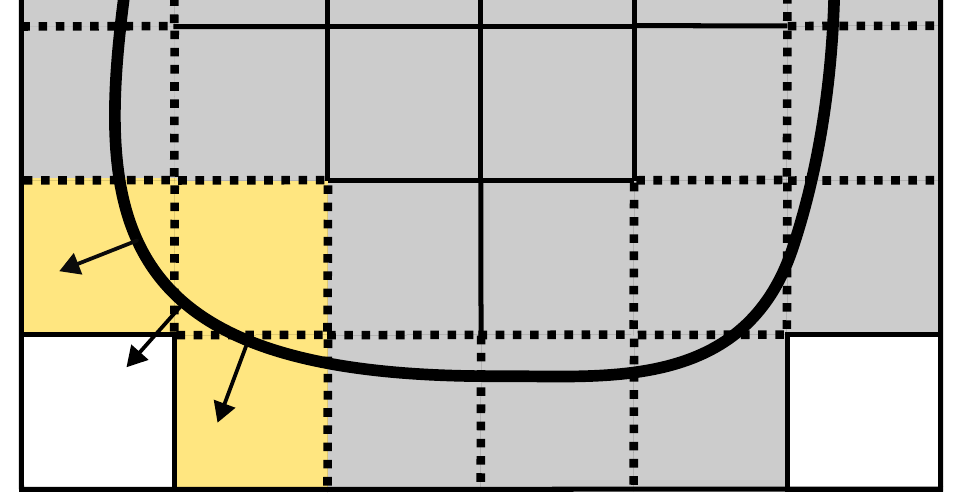}
    \caption{Example of a patch, with non-constant normal vector.} 
    \label{fig:cutfem-patches}
\end{figure}

We next construct a modified Scott--Zhang interpolant with an additional orthogonality property on $\Gamma_s$, which will be needed for the proof of stability of the method. We follow the approach from \cite{BeckerBurmanHansbo2009}. Let $\cT_h^{\Gamma_s}$  be 
the set of elements that intersect the boundary $\Gamma_s$,
$$
\cT_h^{\Gamma_s} = \{ T \in \cT_{h}  \mid  T \cap \Gamma_s  \neq \emptyset  \},
$$
and consider the set of all elements in $\cT_h^{\Gamma_s}$
and elements that share a face or vertex with
these elements,
\begin{align*}
  \overline{\cT}_h^{\Gamma_s} = \{ T \in \cT_h \, | \, T \cap T'  \neq \emptyset ,\, T' \in \cT^{\Gamma_s}_h \}.
 \end{align*} 
Next, we regroup the elements in \(\overline{\cT}_h^{\Gamma_s}\) into non-overlapping patches \( \{ \cP_i \}_{i=1}^{N}\) such that for each patch, a patch function \(\phi_i\) can be constructed from the basis functions with support in \(\cP_i\). Further, \(\cP_i\) and \(\phi_i\) have the properties 
\begin{enumerate}
  \item \( h \lesssim \diam(\cP_i) \lesssim h \),
  \item \( h^{n-1} \lesssim \displaystyle\int_{\Gamma_s \cap \cP_i}  \phi_i \lesssim h^{n-1} \),
  \item \( h^{-1} \lesssim | \nabla \phi_i (x) | \lesssim h^{-1} \). 
\end{enumerate}

In contrast to the proof given in
\cite{BeckerBurmanHansbo2009}, we allow for \emph{non-constant}
normal vectors on each patch, see Figure~\ref{fig:cutfem-patches} for
an example of such a patch. This is particularly important if we want to
consider the use of independently generated surface meshes or higher-order implicit
descriptions to represent the embedded surface.
To take patch-wise normal-field variations into account, 
we recall that the boundary \( \Gamma_s \) is assumed to be of class
\( C^2 \), and we therefore know that there exists an
\(\varepsilon-\)tubular neighborhood \( U_\varepsilon(\Gamma_s) \),
and \(\varepsilon_0 > 0\) such that there is a unique closest point \(
p(x) \) on \( \Gamma_s \) for all \( x \in U_{\varepsilon_0}(\Gamma_s)
\), where \( \abs{x - p(x)} = \dist\left(\Gamma_s,x\right)\). Choosing
\(h\) small enough, we have that \(\overline{\cT}_h^{\Gamma_s}\) is
contained in \( U_{\varepsilon_0}(\Gamma_s) \). Choosing \( \bx_0 \in
\cP_i\cap\Gamma_s \), the following bound holds for the normal vector:
\begin{align}
\norm{\bn(\bx) - \bn(\bx_0)}_{\infty,\cP_i\cap\Gamma_s } \leqslant \norm{\nabla \bn}_{\infty,\cP_i} \norm{\bx - \bx_0}_{\infty,\cP_i\cap\Gamma_s }  \leqslant c_I h, \label{eq:normal_bound}
\end{align}
so that \( c_I \) is the maximum of the principal curvatures. 

We want the modified interpolant, which we denote by $\pi^c_{h}$, to satisfy the following orthogonality property:
\begin{align}
  \int_{\cP_i \cap \Gamma_s} (\bv - \pi^c_{h}\bv )\cdot \bn \,\d s= 0 \quad \forall \, \cP_i. \label{eq:normal_ort_prop}
\end{align}
In \cite{BeckerBurmanHansbo2009} the normal was implicitly assumed to be constant on each patch. In the following, we extend the construction to non-constant normal vectors and dimension \( n \).  

\begin{lemma}
 There exists a modified Scott--Zhang interpolation operator \( \pi^c_{h}:H^1(\Omega) \rightarrow V_h \) that satisfies
 for \( \forall \bv \in [H^r(\Omega)]^n\), \( r \geqslant 1 \),
 the orthogonality property \eqref{eq:normal_ort_prop} and the estimates
\begin{align}
  \norm{ \pi_h\bv - \pi^c_{h} \bv}_{\cT_h} + h^{\onehalf}\norm{\bv - \pi^c_{h} \bv}_{\Gamma}  &\lesssim h \| \bv \|_{1,\Omega}, \label{interpolation_stab2}  \\
\norm{\pi^c_{h} \bv}_{1,\cT_h} & \lesssim \| \bv \|_{1,\Omega}. \label{eq:stab-pi-c}
\end{align}
\end{lemma}

\begin{proof}

Let \( \bY_i \) be the (vector-valued) degree of freedom associated with \(\phi_i\) 
defined on a patch~\(\cP_i\).  
Let \( \be^1(\bx) = \bn(\bx) \) be the surface normal, and define \( \be^k(\bx)\) for \(k=2,\dots,n\) to be orthogonal vectors such that they span the tangential space on \( \Gamma_s \) at $\bx$. 
We determine \( \bY_i \) from the linear system
\begin{align}\label{Y-defn}
  \int_{\cP_i \cap \Gamma_s} \bY_i \cdot \be^k \phi_i \,\d s &= \int_{\cP_i \cap \Gamma_s} (\bv - \pi_{h}\bv )\cdot \be^k\,\d s \qquad \text{for } k = 1,\dots,n,
\end{align}
which we can write in the matrix equation
\begin{align*}
  A_i \bY_i = \bV_i,
\end{align*}
with 
\begin{align*}
  A_i = 
  \begin{bmatrix} 
    \int_{\cP_i \cap \Gamma_s} \phi_i (\be^1)^T \,\d s  \\
    \vdots  \\
    \int_{\cP_i \cap \Gamma_s} \phi_i (\be^n)^T \,\d s
    \end{bmatrix}, \quad
  \bV_i =   \begin{bmatrix} 
    \int_{\cP_i \cap \Gamma_s}  (\bv - \pi_{h}\bv )\cdot \be^1 \,\d s  \\
    \vdots  \\
    \int_{\cP_i \cap \Gamma_s}  (\bv - \pi_{h}\bv )\cdot \be^n \,\d s
    \end{bmatrix}.
\end{align*}
We next show that $A_i$ is invertible. To that end, let \( \bar{\be}^k_{i} =  \be^k(\bx_0) \) for an \( \bx_0 \in \cP_i \cap \Gamma_s \), and define the matrix \(B_i\) 
\begin{align*}
  B_i =  \int_{\cP_i \cap \Gamma_s}  \phi_i \,\d s \,
    \begin{bmatrix} 
      (\bar{\be}^1)^T  \\
      \vdots  \\
      (\bar{\be}^n)^T
      \end{bmatrix}.
\end{align*}
From construction, \( B_i \)  is an invertible matrix. Its Euclidean-based matrix norm can easily be estimated as \( \abs{B_i}_{\bbR^n} \sim h^{n-1} \), and it follows that \(  \abs{B_i^{-1}}_{\bbR^n} \lesssim h^{1-n}.\) We also define the matrix \(C_i\), with elements \(C^{k,j}_i = \int_{\cP_i \cap \Gamma_s} (e^{k,j} - \bar{e}^{k,j}_i) \phi\,\d s\), where \( e^{k,j}\) is the \(j\)-th component of \(\be^k\). 
Recall the bound of the normal vector \eqref{eq:normal_bound}, which is also valid for the orthogonal vectors, so that for one element of \(C_i\) 
\begin{align*}
  \abs{C_i^{k,j}} \leqslant  \norm{e^{k,j}_i - \bar{e}^{k,j}_i}_{\infty, \cP_i \cap \Gamma_s} \int_{\cP_i \cap \Gamma_s} |\phi|\,\d s \lesssim h^{n},
\end{align*}
giving \( \abs{C_i}_{\bbR^n}  \lesssim h^{n} \). 
We can write \(  A_i = B_i + C_i = B_i(\cI + B_i^{-1} C_i),\) and in addition we have
\begin{align}
  \abs{B^{-1}_i C_i}_{\bbR^n} \leqslant   \abs{B^{-1}_i}_{\bbR^n}   \abs{C_i}_{\bbR^n}  \lesssim h,  \label{eq:proof_interpolant_bound}
\end{align} meaning that for small \( h \), \( A_i \) is invertible and \(\bY_i\) is given by \(  \bY_i = A_i^{-1}\bV_i \). The modified interpolant is defined by
\begin{align}
  \pi^c_{h}\bv = \pi_{h}\bv + \sum^{N}_{i = 1} \phi_i \bY_i,  \label{eq:modiefied_interpolant}
\end{align}
which, combined with \eqref{Y-defn}, implies that the orthogonality property \eqref{eq:normal_ort_prop} holds.

To show \eqref{interpolation_stab2}, we next establish a bound for \(\abs{\bY_i}_{\bbR^n} \leqslant  \abs{A^{-1}}_{\bbR^n} \abs{\bV_i}_{\bbR^n}\), and begin by finding a bound for \(  \abs{A^{-1}}_{\bbR^n}  \), 
\begin{align*}
  \abs{A^{-1}}_{\bbR^n}  = \abs{(\cI + B_i^{-1} C_i)^{-1}B_i^{-1}}_{\bbR^n} \lesssim \abs{(\cI + B_i^{-1} C_i)^{-1}}_{\bbR^n} \abs{B_i^{-1}}_{\bbR^n}. 
\end{align*}
For small $h$ we can use power series expansion together with \eqref{eq:proof_interpolant_bound} to get 
\begin{align}
  \abs{A^{-1}}_{\bbR^n} &\leqslant \abs{(\cI + B_i^{-1} C_i)^{-1}}_{\bbR^n} \abs{B_i^{-1}}_{\bbR^n} = \left\lvert \sum_0^{\infty}(- B_i^{-1}C_i)^k \right\rvert_{\bbR^n} \abs{B_i^{-1}}_{\bbR^n} \nonumber \\  &\leqslant \sum_0^{\infty} \abs{B_i^{-1}C_i}_{\bbR^n}^k \abs{B_i^{-1}}_{\bbR^n}  
  = \frac{\abs{B_i^{-1}}_{\bbR^n}  }{1-\abs{B_i^{-1}C_i}_{\bbR^n} }  \lesssim \frac{\abs{B_i^{-1}}_{\bbR^n}  }{1-h}\lesssim h^{1-n} \label{eq:A_inverse}.
\end{align}
Next, we apply the Cauchy-Schwarz inequality and \eqref{interp-G} to obtain
\begin{align*}
  \abs{V^k_i} = \left| \int_{\cP_i \cap \Gamma_s} (\bv - \pi_{h}\bv )\be^k \,\d s \right|  
  \lesssim h^{(n-1)/2} \norm{\bv - \pi_{h}\bv}_{\cP_i \cap \Gamma_s}  
  \lesssim h^{n/2} \abs{\cE \bv}_{1,\cP(\cP_i)},
\end{align*}
where  \( \cP(\cP_i) \) consists of all the patches that share a vertex with \( \cP_i \). 
Then, combining this with \eqref{eq:A_inverse} we get the following bound for \(\bY_i\):
\begin{align}\label{bound-Y}
  \abs{\bY_i}_{\bbR^n} \leqslant\abs{A^{-1}}_{\bbR^n} \abs{\bV_i}_{\bbR^n}\lesssim h^{-n/2+ 1} \abs{\cE \bv}_{1,\cP(\cP_i)}. 
\end{align}
Let $\cI_i$ be the index set of patches that overlap with $\cP_i$. 
Using the definition of $\pi^c_{h}$ \eqref{eq:modiefied_interpolant} and the bound on \(\bY_i\) \eqref{bound-Y}, for any \( T \in \cP_i \) we have that
\begin{align*}
  \norm{\pi_h\bv - \pi^c_{h} \bv}_{T} \leqslant \sum_{j \in \cI_i}\norm{\phi_j \bY_j}_{T} \lesssim h^{n/2} \sum_{j \in \cI_i} \abs{\bY_j}_{\bbR^n}
\lesssim h \sum_{j \in \cI_i} \abs{\cE \bv}_{1,\cP(\cP_j)}.
\end{align*}
Similarly, using \eqref{interp-G}, we obtain
\begin{align*}
  \norm{\bv - \pi^c_{h} \bv}_{\Gamma\cap T} &\leqslant \norm{ \bv- \pi_{h} \bv}_{\Gamma\cap T} +  \sum_{j \in \cI_i}\norm{\phi_j \bY_j}_{\Gamma\cap T} 
  \lesssim h^{\onehalf}|\cE \bv|_{1,\cP(\cP_i)} + h^{(n-1)/2} \sum_{j \in \cI_i}
  \abs{\bY_j}_{\bbR^n}
   \lesssim h^{\onehalf} \sum_{j \in \cI_i} \abs{\cE \bv}_{1,\cP(\cP_j)}
\end{align*}
and, using \eqref{interp-stab}, we have
\begin{align}
  \norm{\pi^c_{h} \bv}_{1,T} \leqslant \norm{\pi_{h}}_{1,T} +  \sum_{j \in \cI_i} \norm{\phi_j \bY_j}_{1,T} 
  \lesssim  \norm{\cE \bv}_{1,\cP(\cP_i)} +  h^{n/2-1}\sum_{j \in \cI_i}
\abs{\bY_j}_{\bbR^n} 
\lesssim \sum_{j \in \cI_i} \norm{\cE \bv}_{1,\cP(\cP_j)}.
  \label{eq:bound_grad_pic} 
\end{align}
Noting that the above three bounds hold trivially for $T \in \cT_h \setminus \overline{\cT}_h^{\Gamma_s}$, the estimates \eqref{interpolation_stab2} and \eqref{eq:stab-pi-c} follow from summing them over $T \in \overline{\cT}_h^{\Gamma_s}$, using that by construction the number of patch overlaps is bounded, and applying \eqref{stab-ext}.
\end{proof}

\subsection{Role and design of the ghost penalties}
The total pressure formulation \eqref{eq:biot-total-press-weak-disc-ghost} 
can be seen as a perturbed Stokes problem for the displacement and total pressure that is intertwined with a Poisson problem for the fluid pressure.
Thus, to devise suitable ghost penalty stabilization, we have to take into
account the different roles played by the individual bilinear form contributions.
To do so, we start our discussion from an abstract finite element
space $\cV_h$ satisfying 
$\mathbb{P}_{\mathrm{c}}^k(\cT_h) \subset \cV_h$
% $\subset H^1(\cT_h)$
and review the required properties for $H^1$ stabilizing
ghost penalty $g_h(\cdot, \cdot)$ suitable for cut finite element formulations of
elliptic problems, see e.g.~\cite{GuerkanMassing2019,dePrenterVerhooselvanBrummelenEtAl2023}.
% \andre{Depending on which mixed function space we use (only TH or more general), we need to clarify conformity required of $\cV_h$.}
Then, departing from this abstract setting, we discuss the specific ghost penalties needed
for the relevant bilinear forms appearing in~\eqref{eq:Bh-def}.

\subsubsection{H1-stabilizing ghost penalties and realizations}
We assume that the ghost penalty $g_h(\cdot, \cdot)$ is a symmetric and positive semi-definite bilinear form on $\cV_h$
satisfying the following properties:

\begin{enumerate}[label=\textbf{A\arabic*}]
\item  ($H^1$-seminorm extension property for $v_h \in \cV_h$)
  \begin{align*}
    \| \nabla v_h \|_{\cT_h}
    \lesssim
    \| \nabla v_h \|_{\cT_h^i}  + |v_h|_{g_h},
    %\label{eq:ass-ep1}
  \end{align*}
\item  (Weak consistency for $v \in H^s(\Omega)$)
  \begin{align*}
    |\pi_h v |_{g_h} \lesssim h^{r-1} \| v\|_{r,\Omega},
    %\label{eq:ass-ep2}
  \end{align*}
  where $r = \min\{s,k+1\}$ and
  $\pi_h$ is the quasi-interpolation operator given in~\eqref{eq:ext-interpolant-def}.
\end{enumerate}
These two properties are crucial for deriving 
geometrically robust discrete coercivity and optimal a priori error estimates 
for cut finite element formulations of Poisson-type problems, see \cite{GuerkanMassing2019,dePrenterVerhooselvanBrummelenEtAl2023}.
To ensure that the associated system matrices satisfy the usual condition number bounds, 
we typically require the following additional properties to be satisfied by the ghost penalty $g_h(\cdot, \cdot)$:

\begin{enumerate}[label=\textbf{A\arabic*}]
\setcounter{enumi}{2}
\item  ($L^2$-norm extension property for $v_h \in \cV_h$)
  \begin{align*}
    \| v_h \|_{\cT_h} 
    &\lesssim
      \| v_h \|_{\cT_h^i} + | v_h |_{g_h},
      %\label{eq:ass-ep3b}
  \end{align*}
\item  (Inverse inequality for $v_h \in \cV_h$),
  \begin{align*}
     %\label{eq:ass-ep4}
    |v_h|_{g_h} \lesssim h^{-1} \| v_h \|_{\cT_h}.
  \end{align*}
\end{enumerate}

Next, we briefly review the most common realizations of the ghost
penalty $g_h(\cdot,\cdot)$ satisfying
Assumptions~\textbf{A1}--\textbf{A4} for $\cV_h$.
\emph{Facet-based} ghost penalties~\cite{BeckerBurmanHansbo2009,Burman2010,Massing-etal-cutfem-Stokes}
\begin{align}
  g_h^f(v,w) &=  \sum_{j=0}^p \gamma^f_p h^{2j-1} 
  (\jump{\partial_n^j v},  \jump{\partial_n^j w})_{\cF_h^g},
  \label{eq:gc_face_based}
\end{align}
penalize the jumps of
normal-derivatives of all relevant polynomial orders across faces belonging to the
facet set 
\begin{align*}
  \cF_h^g
  = \{ F \in \cF_h:  T^+ \cap \Gamma \neq \emptyset \lor T^- \cap \Gamma \neq \emptyset \}.
\end{align*}
Here we use
the notation $\partial_n^j v \coloneqq \sum_{| \alpha | =
j}\tfrac{D^{\alpha} v(x)n^{\alpha}}{\alpha !}$ for multi-indices $\alpha
= (\alpha_1, \ldots, \alpha_d)$, $|\alpha| = \sum_{i} \alpha_i$ and
$n^{\alpha} = n_1^{\alpha_1} n_2^{\alpha_2} \cdots n_d^{\alpha_d}$.
The constant $\gamma^f$ denotes a dimensionless stability parameter.
Note that for $H^1$-conforming finite element spaces, the jump terms for $j=0$ vanish.

In \cite{Burman2010}, ghost penalties  based on a
\emph{local projection stabilization} were proposed. For a given
patch $P$ with $\diam(P) \lesssim h$ containing the two elements $T_1$
and $T_2$, one defines the $L^2$-projection
$\pi_{P} : L^2(P) \to \mathbb{P}_{p}(P)$ onto the space of polynomials of
order $p$ associated with the patch~$P$. For $v \in V_h$,
the fluctuation operator $\kappa_P = I - \pi_P$ measures then the deviation of
$v|_P$ from being a polynomial defined on $P$.
By choosing certain patch definitions, 
a coupling between
elements with a possible small cut and an interior element
is ensured. One patch choice arises naturally from the definition
of $\cF_h^g$ by defining
the patch $P(F) = T^+_F \cup T^-_F$ for two elements $T^+_F$, $T^-_F$ sharing the interior face $F$ and
setting
\begin{align*}
  \cP_1 = \{P(F) \}_{F\in \cF_h^g}.
\end{align*}
A second possibility is to use neighborhood patches $\omega(T)$,
\begin{align*}
  \cP_2 = \{\omega(T) \}_{T \in \cT^{\Gamma}_h}.
\end{align*}
Alternatively, one can mimic the cell agglomeration approach taken in
classical unfitted finite element methods
approaches~\cite{SollieBokhoveVegt2011,JohanssonLarson2013,BadiaVerdugoMartin2018}
by associating to each cut element $T \in \cT_h^{\Gamma}$ with a small
intersection $|T \cap \Omega|_d \ll |T|_d$ an element
$T' \in \cT_h^i$ such that
the ``agglomerated
patch'' $P_a(T) = T \cup T'$ satisfies $\diam{P_a(T)} \lesssim h$.
A proper collection of patches
is then given by
\begin{align*}
  \cP_3 = \{P_a(T)\}_{T\in\cT_h^{\Gamma}}.
\end{align*}
The resulting \emph{local projection stabilization} ghost penalties  are then defined as follows:
\begin{align*}
  g_h^p(v,w) &= \gamma^p \sum_{P \in \cP} h^{-2} ( \kappa_P v,  \kappa_P w)_P, \quad \cP \in \{\cP_1, \cP_2, \cP_3\}.
\end{align*}

Finally, an elegant version of a patch-based ghost penalty
avoiding the use of local projection operators 
was presented in~\cite{Preuss2018}.
By extending polynomials $u_i$ defined on an element $T_i$ naturally to global polynomials $u_i^e$,
a volume-based jump on a patch $P = T_1 \cup T_2$ can be defined by
$\jump{u}_P = u_1^e - u_2^e$ which give rise to the \emph{volume based} ghost penalty
\begin{align*}
  g_h^v(v,w) &= \gamma_b^v \sum_{P \in \cP_1} h^{-2} (\jump{v}_P,\jump{w}_P)_P, \quad \cP \in \{\cP_1, \cP_2, \cP_3\}.
\end{align*}
More realizations with alternative penalty operators and patch choices
can be found in the literature, see e.g.
\cite{dePrenterVerhooselvanBrummelenEtAl2023} and the references
therein.

\subsubsection{Ghost penalties for the Biot problem}
Starting from the abstract setting, we define the individual ghost penalties for the relevant bilinear forms appearing in~\eqref{eq:Bh-def} as follows:
\begin{alignat*}{2}
  g_{1,h}(\bu_h, \bv_h) &= \mu g_h(\bu_h, \bv_h), && \quad \forall \bu_h, \bv_h \in \bV_h,
  \\
  g_{2,h}(p_{T,h}, q_{T,h}) &= \frac{h^2}{\mu}  g_h(p_{T,h}, q_{T,h}), && \quad \forall p_{T,h}, q_{T,h} \in Q_{T,h},
  \\
  g_{3,h}^1(p_{F,h}, q_{F,h}) &= K g_h(p_{F,h}, q_{F,h}), && \quad \forall p_{F,h}, q_{F,h} \in Q_{F,h},
  \\
  g_{3,h}^2(p_{F,h}, q_{F,h}) &= \frac{1}{\lambda} g_h(p_{F,h}, q_{F,h}), && \quad \forall p_{F,h}, q_{F,h} \in Q_{F,h}.
\end{alignat*}
Here, we have tacitly assumed that $g_h(\cdot,\cdot)$ can be defined analogously for vector-valued 
arguments $\bu_h, \bv_h \in \bV_h$.

The scaling of the individual ghost penalties is chosen to match the scaling of the corresponding bilinear forms
in~\eqref{eq:Bh-def}. In particular, the $h^2$-scaling appearing in $g_{2,h}(\cdot,\cdot)$
reflects the fact that the pressure $p_{T,h}$ needs to be controlled in an $L^2$-like norm,
and later we will see that the $h$-scaled $H^1$-seminorm  $\|h\nabla
p_{T,h}\|_{\cT_h}$ of the pressure naturally appears in the analysis of the
discrete inf-sup condition, see Assumption~\ref{Assumption_GO}.
To prepare for this, we record here some important properties of the
pressure ghost penalty $g_{2,h}(\cdot,\cdot)$, which follow directly from
Assumptions \textbf{A1}--\textbf{A4} for the abstract ghost penalty $g_h(\cdot,\cdot)$.
% implies that the pressure ghost penalty $g_{2,h}(\cdot,\cdot)$ satisfies
First, using Assumption \textbf{A1}, we see that
\begin{align}
  \|h\nabla p_{T,h} \|_{\cT_h} \lesssim \|h\nabla p_{T,h} \|_{\cT_h^i} + \mu^{\onehalf}|p_{T,h}|_{g_{2,h}} \lesssim \| p_{T,h} \|_{\cT_h}, 
  \quad \forall p_{T,h} \in Q_{T,h}, \label{eq:gp_extendpt}
\end{align}
where the second inequality follows from  \textbf{A4} and the inverse estimate~\eqref{eq:inverse-estimates-I}.
Second, using Assumption \textbf{A2}, we have the weak consistency estimate
\begin{align*}
  |\pi_h p_{T}|_{g_{2,h}} \lesssim \mu^{-\onehalf} h^{\bar{s}} \| p_{T} \|_{\bar{s},\Omega}, \quad \forall p_{T} \in H^s(\Omega),
\end{align*}
where $\pi_h: L^2(\Omega) \to Q_{T,h}$ is the quasi-interpolation operator 
given in~\eqref{eq:ext-interpolant-def} with $V_h = X_h^{l}$ and $\bar{s} = \min\{s,l+1\}$.
Third, we show that the pressure ghost penalty $g_{2,h}(\cdot,\cdot)$ satisfies an $L^2$-norm extension property.
\begin{lemma}  For the pressure ghost penalty \(g_{2,h}(\cdot,\cdot)\) defined above, the following estimate holds:  \label{lemma:l2-extension-pt}
\begin{align}
  \|p_{T,h} \|_{\cT_h} \lesssim \|p_{T,h} \|_{\cT_h^i} + \mu^{\onehalf}|p_{T,h}|_{g_{2,h}} 
  \quad \forall p_{T,h} \in Q_{T,h} \label{eq:l2_stab_pt}. 
\end{align}
\end{lemma}
\begin{proof}
We start by decomposing the mesh $\cT_h$ into the interior mesh and the mesh cut by \(\Gamma\), 
\begin{align}
   \|p_{T,h} \|_{\cT_h} &\leqslant \|p_{T,h} \|_{\cT^i_h} + \|p_{T,h} \|_{\cT^\Gamma_h} \label{eq:proof_l2_stab_pt_I}.
\end{align}
Next,  let \( \{ \cP_j \}_{j=1}^{M}\) be a finite collection of patches that cover  \( \cT^\Gamma_h \), where each patch satisfies \( \diam(\cP_j) \lesssim h\) and also contains at least one element from \(\cT^i_h\). Let \( \bar{p}_{T,h}^j \) be the average of \( p_{T,h} \) over the patch \( \cP_j \). We have
\begin{align}
    \|p_{T,h} \|_{\cT^\Gamma_h} 
    \leqslant \sum_{j=1}^{M} \|p_{T,h} \|_{\cP_j} 
    \leqslant \sum_{j=1}^{M} \|p_{T,h} - \bar{p}_{T,h}^j \|_{\cP_j} + \sum_{j=1}^{M}\|\bar{p}_{T,h}^j \|_{\cP_j} = \text{I} + \text{II} \label{eq:proof_l2_stab_pt_II}. 
\end{align}
Using a local Poincar\'e--Friedrichs inequality on each patch, the inverse estimate~\eqref{eq:inverse-estimates-I}, and Assumption \textbf{A1}, we can bound the first term as
\begin{align}
  \text{I} \lesssim \sum_{j=1}^{M} h \|\nabla p_{T,h} \|_{\cP_j} 
  \leqslant  \|h\nabla p_{T,h} \|_{\cT_h^\Gamma} + \|h\nabla p_{T,h} \|_{\cT_h^i} 
  \lesssim \|h\nabla p_{T,h} \|_{\cT_h^i} +  \mu^{\onehalf} |p_{T,h}|_{g_{2,h}} 
  \lesssim \| p_{T,h} \|_{\cT_h^i} +   \mu^{\onehalf}|p_{T,h}|_{g_{2,h}}   \label{eq:proof_l2_stab_pt_III}.
\end{align}
For the second term, let \(\cP_j^i = \cP_j\cap \cT_h^i \) denote the part of the patch \(\cP_j\) that belongs to the interior mesh. Since \( \diam(\cP_j) \lesssim h\) and $\cP_j^i$ contains at least one element, it follows that $\abs{\cP_j} \lesssim \abs{\cP_j^i}$. We then have
\begin{align}
  \text{II} \lesssim \sum_{j=1}^{M} \| \bar{p}_{T,h}^j \|_{\cP_j^i} 
  \lesssim  \sum_{j=1}^{M} \| \bar{p}_{T,h}^j - p_{T,h}\|_{\cP_j^i} + \sum_{j=1}^{M} \| p_{T,h}\|_{\cP_j^i}
  \lesssim  \sum_{j=1}^{M} \| h \nabla p_{T,h} \|_{\cP_j^i} + \| p_{T,h}\|_{\cT_h^i}
  \lesssim \| p_{T,h}\|_{\cT_h^i},  \label{eq:proof_l2_stab_pt_IIV}
\end{align}
where we have used a local Poincar\'e--Friedrichs inequality on each patch in the next-to-last step, and the inverse estimate~\eqref{eq:inverse-estimates-I} in the last step. Combining \eqref{eq:proof_l2_stab_pt_I}--\eqref{eq:proof_l2_stab_pt_IIV} yields \eqref{eq:l2_stab_pt}.
\end{proof}
\section{Stability and a priori error analysis}
\label{sec:analysis}
This section is devoted to the theoretical analysis of the
cut finite element method~\eqref{eq:biot-total-press-weak-disc-ghost} for the Biot problem.
We start by collecting the necessary stability and coercivity
estimates for the bilinear forms involved, before we turn to the
discussion of a modified inf-sup condition for the displacement-total-pressure
coupling bilinear form \(b_{1,h}(\cdot,\cdot)\) when mixed boundary conditions are considered.
Afterwards, we state and prove the main inf-sup condition for the total
bilinear form $A_h(\cdot,\cdot)$ given in~\eqref{eq:total_bilinear_form}.
Here, the main objective is to carry over the parameter robustness of
the total pressure formulation to the cut finite element setting,
ensuring that the resulting unfitted method is also geometrically robust.
We conclude the section by deriving optimal a priori error estimates
for the proposed cut finite element method.

\subsection{Stability estimates}
\begin{lemma}  \label{lemma:bilinear_bounds}
Assuming that $\gamma_u$ and $\gamma_p$ are sufficiently large, it holds that
\begin{align}
  A_{1,h}(\bv_h, \bv_h)  &\geqslant  c_{A_1} \tnormv{\bv_h}^2, &  \forall \bu_h,\bv_h \in \bV_h, \label{A1-coercive} \\
  A_{3,h}(q_{F,h}, q_{F,h})  &\geqslant  c_{A_3} \tnormf{q_{F,h}}^2, &  \forall p_{F,h},q_{F,h} \in Q_{F,h}, \label{A3-coercive}\\
  A_{1,h}(\bu_h, \bv_h) &\leqslant C_{A_1} \tnormv{\bu_h}\tnormv{\bv_h}, &  \forall \bu_h,\bv_h \in \bV_h, \label{A1-bounded}  \\
      a_{1,h}(\bu,\bv_{h}) &\lesssim  \normvast{\bu} \tnormv{\bv_{h}}, \ &\forall (\bu,\bv_h) \in [H^2(\Omega)]^n\times \bV_h, \label{a1-bounded}\\
    a_{2,h}(q_{T},q_{T,h}) &\lesssim  \lambda^{-1} \mu \|q_{T}\|_\Omega \tnormt{q_{T,h}}, \ &\forall (q_{T},q_{T,h}) \in L^2(\Omega)\times  Q_{T,h},\\
    a_{3,h}(q_{F},q_{F,h}) &\lesssim  \normfast{q_{F}} \tnormf{q_{F,h}}, \ &\forall (q_{F},q_{F,h}) \in H^2(\Omega)\times  Q_{F,h}, \label{a3-bounded}\\
    b_{1,h}(\bv, q_{T}) &\lesssim  \normv{\bv}\normtast{q_{T}}, &\forall (\bv,q_{T} ) \in [H^1(\Omega)]^n\times H^1(\Omega), \label{b1-bounded}\\
    b_{2,h} (q_{F}, q_{T}) &\lesssim \lambda^{-1} \norm{q_{F}}_{\Omega} \norm{q_{T}}_{\Omega}, \ &\forall  (q_{F}, q_{T}) \in L^2(\Omega)\times L^2(\Omega),\\
    L_1(\bv_h) &\lesssim \big(\mu^{-\onehalf} \|\bf\|_{\Omega}
    +\mu^{\onehalf} \norm{h^{-\onehalf} \bu_D}_{\Gamma_d}
    + \norm{ \bsigma_N }_{\tilde H^{-\onehalf}(\Gamma_s)} \big)\tnormv{\bv_h}, &  \forall \bv_h \in \bV_h, \label{L1-bound}\\
    L_2(q_{T,h}) &\lesssim \mu^{\onehalf} \norm{h^{-\onehalf} \bu_D}_{\Gamma_d} \tnormt{q_{T,h}}, & \forall q_{T,h} \in Q_{T,h}, \label{L2-bound}\\
    L_3(q_{F,h}) &\lesssim 
    \big(\lambda^{\onehalf}\|g\|_{\Omega}  
     + K^{\onehalf} \norm{h^{-\onehalf} p_{F,D}}_{\Gamma_s} + \norm{ g_N }_{\tilde H^{-\onehalf}(\Gamma_d)}\big)\tnormf{q_{F,h}}, & \forall q_{F,h} \in Q_{F,h}. \label{L3-bound}
\end{align}
\end{lemma}
\begin{proof}
The bilinear form \(a_{3,h}\) is associated with an elliptic problem with Nitsche's method, and \(a_{1,h}\) and \(b_{1,h}\) are associated with the Stokes problem with Nitsche's method, and similarly with the stabilized forms. Proof of coercivity and boundedness follows from the standard arguments for these bilinear forms, see e.g. \cite{Burman2010,Burman-Hansbo-cutfem-Nitsche,Massing-etal-cutfem-Stokes,MassingSchottWall2017}, and similarly for the boundness of \(a_{2,h}\), \(b_{2,h}\), $L_1$, $L_2$, and $L_3$. We emphasize a few key points in the argument. Bounds \eqref{A1-coercive}--\eqref{A1-bounded}, \eqref{L1-bound}, and \eqref{L3-bound} involve the stabilized norms $\tnormv{\cdot}$ and $\tnormf{\cdot}$. Special care needs to be taken to control the terms in $a_{1,h}$ and $a^1_{3,h}$ involving first derivatives on $\Gamma$. For example, we have
\begin{equation*}
  (\mu\varepsilon(\bu_h)\cdot \bn, \bv_h)_{\Gamma_d} \leqslant \mu^{\onehalf}h^{\onehalf}\|\varepsilon(\bu_h)\cdot \bn\|_{\Gamma_d}\mu^{\onehalf}h^{-\onehalf}\|\bv_h\|_{\Gamma_d} \lesssim \mu^{\onehalf}
  \|\varepsilon(\bu_h)\|_{\cT_h}\normv{\bv_h} \lesssim \tnormv{\bu_h}\tnormv{\bv_h},
\end{equation*}
where in the second inequality we used the trace inequality \eqref{eq:trace-ineq-2} and the inverse estimate \eqref{eq:inverse-estimates-I}, and in the last inequality we used Assumption~\textbf{A1}. The terms $(\bu_h, \mu\varepsilon(\bv_h)\cdot \bn)_{\Gamma_d}$ in $a_{1,h}$, $(K \nabla p_{F,h} \cdot \bn, q_{F,h})_{\Gamma_s}$ and $(p_{F,h}, K \nabla q_{F,h} \cdot \bn )_{\Gamma_s}$ in $a_{3,h}^1$, $ (\bu_D, \mu\varepsilon(\bv_h)\cdot \bn)_{\Gamma_d}$ in $L_1$, and $(p_{F,D}, K \nabla q_{F,h} \cdot \bn )_{\Gamma_s}$ in $L_3$ are handled similarly.

Furthermore, in bounds \eqref{a1-bounded}, \eqref{a3-bounded}, and \eqref{b1-bounded}, one of the arguments is bounded in the norms $\normvast{\cdot}$,  $\normfast{\cdot}$ or $\normtast{\cdot}$, which are enhanced with suitable boundary terms, allowing to control the boundary terms in the bilinear forms. 
\end{proof}

Next, we recall an
assumption from \cite{GuzmanOlshanskii2016}, where we make use of the function space
$ H^1_0(\cT_h^i) = \{ v \in H^1(\cT_h^i) : v = 0 \text{ on } \partial \cT_h^i \} $.

\begin{assumption}
  \label{Assumption_GO}
  \cite{GuzmanOlshanskii2016} Assume that there exists \(\tilde\beta > 0\) independent of \(h\), depending only on polynomial degree of finite element spaces and the shape regularity of \( \cT_{h} \) such that 
  \begin{align*}
    \tilde\beta \|  h \nabla q  \| _{\cT_h^i} 
    \leqslant 
    \sup_{\bv \in \bV_{h}^{i}} \frac{(\nabla \cdot \bv,q)_{\cT_h^i}}{\| \bv  \| _{1,\cT_h^i}}, &  \quad \forall q \in Q_{T,h},
  \end{align*}
  where \( \bV_{h}^{i} = \bV_{h}\cap[H^1_0(\cT_h^i)]^n \).
\end{assumption}
 
With this assumption, we can prove a modified inf-sup condition for \(b_{1,h}\),
where the defect term can be quantified in terms of the seminorm induced by the ghost penalty for the total pressure. But first we need to ensure that the corrected Scott-Zhang interpolant \(\pi^c_{h}\) defined in \eqref{eq:modiefied_interpolant} is stable in the ghost-penalty enhanced norm \(\tnormv{\cdot}\).

\begin{lemma} For any \(v \in H_{\Gamma_d,0}^1(\Omega)\) it holds that
  \begin{align}\label{interpolation_stability}
    \tnormv{\pi^c_{h} \bv }  &\lesssim \mu^{\onehalf} \| \bv \|_{1,\Omega}.  
  \end{align}
\end{lemma}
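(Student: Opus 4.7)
The plan is to decompose $\tnormv{\pi^c_h \bv}^2$ into its three contributions
and bound each one separately using the interpolation estimate~\eqref{interpolation_stab2},
the trace inequality~\eqref{eq:trace-ineq-2}, the stability of the extension
operator $\cE$, and the ghost penalty assumptions \textbf{A2} and \textbf{A4}.
Concretely, I would split
$$
\tnormv{\pi^c_h \bv}^2
= \mu \| \varepsilon(\pi^c_h \bv) \|_{\Omega}^2
+ \gamma_u \mu \| h^{-\onehalf} \pi^c_h \bv \|_{\Gamma_d}^2
+ | \pi^c_h \bv |_{g_{1,h}}^2
$$
and handle the three terms in turn.

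For the strain term, I would insert $\pm \cE \bv$ and use the triangle inequality:
$\mu\|\varepsilon(\pi^c_h \bv)\|_\Omega^2 \lesssim \mu\|\nabla(\pi^c_h \bv - \cE \bv)\|_{\cT_h}^2 + \mu\|\varepsilon(\cE \bv)\|_\Omega^2$.
The first contribution is controlled by~\eqref{interpolation_stab2}, while the
second is controlled by the boundedness of $\cE$ on $H^1$. For the boundary term,
the key observation is that $\bv \in H^1_{\Gamma_d,0}(\Omega)$ implies
$\cE \bv|_{\Gamma_d} = 0$ as a trace from $\Omega$, so that
$\pi^c_h \bv = \pi^c_h \bv - \cE \bv$ on $\Gamma_d$. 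Applying the trace
inequality~\eqref{eq:trace-ineq-2} element-wise on $\cT_h^\Gamma$ then yields
$$
h^{-1}\|\pi^c_h \bv - \cE \bv\|_{\Gamma_d}^2
\lesssim h^{-2}\|\pi^c_h \bv - \cE \bv\|_{\cT_h^\Gamma}^2
+ \|\nabla(\pi^c_h \bv - \cE \bv)\|_{\cT_h^\Gamma}^2,
$$
and both terms on the right are bounded by $\|\bv\|_{1,\Omega}^2$
via~\eqref{interpolation_stab2}.

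For the ghost penalty term, I would further split
$\pi^c_h \bv = \pi_h \bv + (\pi^c_h \bv - \pi_h \bv)$. The contribution
from $\pi_h \bv$ is handled by the weak consistency assumption \textbf{A2}
applied with $r=1$, giving $|\pi_h \bv|_{g_h} \lesssim \|\bv\|_{1,\Omega}$.
For the correction $\pi^c_h \bv - \pi_h \bv = \sum_i \phi_i \bY_i$,
I would use the inverse inequality \textbf{A4} together with the
$L^2$ bound $\|\phi_i \bY_i\|_K \lesssim h^{n/2}|\bY_i|_{\mathbb{R}^n}
\lesssim h \|\nabla \bv\|_{\cP(\cP_i)}$ extracted from the proof of the
previous lemma. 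Summing over patches (which have bounded overlap) then gives
$|\pi^c_h \bv - \pi_h \bv|_{g_h} \lesssim h^{-1}\|\pi^c_h\bv-\pi_h\bv\|_{\cT_h} \lesssim \|\nabla \bv\|_{\Omega}$.
Combining all three bounds and multiplying by $\mu$ yields the claim.

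The main obstacle is the treatment of the Nitsche boundary term on $\Gamma_d$:
one must justify that $\bv \in H^1_{\Gamma_d,0}(\Omega)$ yields a vanishing trace
of $\cE \bv$ on $\Gamma_d$ (so that the interpolation error can replace $\pi^c_h \bv$
on $\Gamma_d$), and then carefully apply the cut-cell trace
inequality~\eqref{eq:trace-ineq-2} on each element intersecting $\Gamma_d$
before invoking~\eqref{interpolation_stab2}. Once this step is done, the
remaining computations are routine applications of the interpolation and
ghost penalty machinery already developed.
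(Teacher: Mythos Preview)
Your proposal is correct and follows essentially the same route as the paper: decompose $\tnormv{\pi^c_h\bv}^2$ into the strain, Nitsche boundary, and ghost penalty contributions, use $\bv|_{\Gamma_d}=0$ together with the trace inequality~\eqref{eq:trace-ineq-2} and~\eqref{interpolation_stab2} for the boundary term, and split $\pi^c_h\bv=\pi_h\bv+(\pi^c_h\bv-\pi_h\bv)$ combined with \textbf{A2} and \textbf{A4} for the ghost penalty. The only cosmetic difference is that for the strain term the paper invokes~\eqref{eq:bound_grad_pic} and the $H^1$-stability of $\pi_h$ directly, whereas you insert $\pm\cE\bv$ and use~\eqref{interpolation_stab2}; both arguments are equivalent.
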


\begin{proof}
From the definition of $\tnormv{\cdot}$ (cf. \eqref{defn-norm-Vh} and \eqref{Vh-norm}), we have
\begin{align}\label{pihc-stab-0}
    \tnormv{\pi^c_{h} \bv } \leqslant \mu^{\onehalf} \norm{\varepsilon(\pi^c_{h} \bv)}_{\Omega}
    + \gamma_u^{\onehalf} \mu^{\onehalf} \|h^{-\onehalf} \pi^c_{h} \bv\|_{\Gamma_d}
    + |\pi^c_{h} \bv|_{g_{1,h}}.
\end{align}
For the first term, using the stability of \(\pi^c_h\) \eqref{eq:stab-pi-c}, we have
\begin{equation}\label{pihc-stab-1}
\mu^{\onehalf} \norm{\varepsilon(\pi^c_{h} \bv)}_{\Omega} \lesssim \mu^{\onehalf} \| \bv \|_{1,\Omega}.
\end{equation}
For the second term in \eqref{pihc-stab-0}, since we have \(v \in H_{\Gamma_d,0}^1(\Omega)\) we can write 
\begin{align}\label{pihc-stab-2}
\gamma_u^{\onehalf} \mu^{\onehalf}\|h^{-\onehalf} \pi^c_{h} \bv\|_{\Gamma_d} = \gamma_u^{\onehalf} \mu^{\onehalf}\|h^{-\onehalf} (\pi^c_{h} \bv - \bv)\|_{\Gamma_d} \lesssim \mu^{\onehalf}\| \bv \|_{1,\Omega},
\end{align}
where the last inequality comes from \eqref{interpolation_stab2}. For the third term in \eqref{pihc-stab-0} we use the triangle inequality, Assumptions \textbf{A4} and \textbf{A2} for the ghost penalty, and result \eqref{interpolation_stab2}:
\begin{align}\label{pihc-stab-3}
|\pi^c_{h} \bv|_{g_{1,h}} \leqslant |\pi^c_{h} \bv - \pi_{h} \bv|_{g_{1,h}} + |\pi_{h} \bv|_{g_{1,h}} 
  \lesssim h^{-1} \mu^{\onehalf} \norm{ \pi^c_{h} \bv - \pi_{h} \bv}_{\cT_h}
+ \mu^{\onehalf} \norm{\bv}_{1,\Omega}
 \lesssim \mu^{\onehalf} \| \bv \|_{1,\Omega}. 
\end{align}
Bound \eqref{interpolation_stability} follows by combining \eqref{pihc-stab-0}--\eqref{pihc-stab-3}.
\end{proof}

With these preparations, we can now prove the modified inf-sup condition for \(b_{1,h}\).
\begin{lemma}
Suppose that Assumption \ref{Assumption_GO} holds. There exist constants $\beta_1 > 0$ and $\beta_2 > 0$ such that 
\begin{equation}\label{inf-sup}
\sup_{\bv_h \in \bV_h} \frac{b_{1,h}(\bv_h, q_{T,h})}{\tnormv{\bv_h}} \geqslant \beta_1  \|q_{T,h} \|_{Q_{T,h}} - \beta_2  |q_{T,h}|_{g_{2,h}}, \quad \forall q_{T,h} \in Q_{T,h}.
\end{equation}
\end{lemma}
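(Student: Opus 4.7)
The plan is to combine three ingredients: the continuous Stokes inf-sup on $[H^1_{\Gamma_d,0}(\Omega)]^n \times L^2(\Omega)$ for mixed boundary conditions, the modified interpolant $\pi^c_h$ together with its $\Gamma_s$-orthogonality \eqref{eq:normal_ort_prop}, and the interior-mesh discrete inf-sup of Assumption \ref{Assumption_GO}. First I would use the continuous inf-sup to produce, for the given $q_{T,h}$, a field $\bv \in [H^1_{\Gamma_d,0}(\Omega)]^n$ with $\|\bv\|_{1,\Omega} \lesssim \|q_{T,h}\|_\Omega$ and $-(\nabla \cdot \bv, q_{T,h})_\Omega \gtrsim \|q_{T,h}\|_\Omega^2$, and then lift it to the discrete space via $\pi^c_h \bv \in \bV_h$, which is $\tnormv{\cdot}$-stable by the preceding lemma.

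The algebraic key is to write $\pi^c_h \bv = \bv + (\pi^c_h \bv - \bv)$ inside $b_{1,h}$, integrate by parts on the defect, and exploit $\bv|_{\Gamma_d} = 0$ so that the $\Gamma_d$ boundary contributions cancel, producing
\begin{align*}
 b_{1,h}(\pi^c_h \bv, q_{T,h}) = -(\nabla \cdot \bv, q_{T,h})_\Omega + (\pi^c_h \bv - \bv,\, \nabla q_{T,h})_\Omega - ((\pi^c_h \bv - \bv) \cdot \bn,\, q_{T,h})_{\Gamma_s}.
\end{align*}
The bulk defect is controlled by Cauchy--Schwarz and \eqref{interpolation_stab2} as $\lesssim \|\bv\|_{1,\Omega}\,\|h \nabla q_{T,h}\|_{\cT_h}$. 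For the $\Gamma_s$-defect I would use the orthogonality \eqref{eq:normal_ort_prop} patchwise by subtracting the mean of $q_{T,h}$ over each patch $\cP_i$, then invoking Cauchy--Schwarz, the trace inequality \eqref{eq:trace-ineq-2}, a patchwise Poincaré inequality, and \eqref{interpolation_stab2}; after summing over the boundedly overlapping patches this yields the same bound $\lesssim \|\bv\|_{1,\Omega}\,\|h \nabla q_{T,h}\|_{\cT_h}$. The ghost-penalty extension \eqref{eq:gp_extendpt} finally replaces $\|h \nabla q_{T,h}\|_{\cT_h}$ by $\|h \nabla q_{T,h}\|_{\cT_h^i} + |q_{T,h}|_{g_{2,h}}$.

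At this point $\pi^c_h \bv$ alone suffices only when $\|h \nabla q_{T,h}\|_{\cT_h^i}$ is small compared to $\|q_{T,h}\|_\Omega$; to cover the general case I would invoke Assumption \ref{Assumption_GO} to obtain $\tilde\bv \in V_h^i$ (extended by zero to $\cT_h$) with $\|\tilde\bv\|_{H^1(\cT_h^i)} \sim \|h\nabla q_{T,h}\|_{\cT_h^i}$ and $b_{1,h}(\tilde\bv, q_{T,h}) \gtrsim \|h\nabla q_{T,h}\|_{\cT_h^i}^2$; the $\Gamma_d$ boundary term in $b_{1,h}$ vanishes because $\tilde\bv = 0$ on $\partial \cT_h^i$ and hence on $\Gamma_d \subset \cT_h^\Gamma$, while the Nitsche and ghost-penalty contributions to $\tnormv{\tilde\bv}$ are bounded via standard inverse estimates on the interface between $\cT_h^i$ and $\cT_h^\Gamma$, giving $\tnormv{\tilde\bv} \lesssim \mu^{1/2} \|\tilde\bv\|_{H^1(\cT_h^i)}$. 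Setting $\bw_h = \pi^c_h \bv + \delta \tilde\bv$ with $\delta$ sufficiently large, Young's inequality absorbs the cross term $\|q_{T,h}\|_\Omega \|h\nabla q_{T,h}\|_{\cT_h^i}$ against the positive $\delta \|h\nabla q_{T,h}\|_{\cT_h^i}^2$ to yield
\begin{align*}
 b_{1,h}(\bw_h, q_{T,h}) \gtrsim \|q_{T,h}\|_\Omega^2 - C\, \|q_{T,h}\|_\Omega\, |q_{T,h}|_{g_{2,h}}.
\end{align*}
Dividing by $\tnormv{\bw_h} \lesssim \mu^{1/2} \|q_{T,h}\|_\Omega$, where the bound uses the interpolation stability of $\pi^c_h \bv$ together with the inverse estimate $\|h \nabla q_{T,h}\|_{\cT_h^i} \lesssim \|q_{T,h}\|_\Omega$, then yields the claimed estimate with the correct prefactor $\mu^{-1/2}$.

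The main obstacle is the $\Gamma_s$ boundary-defect estimate: without the orthogonality \eqref{eq:normal_ort_prop} a direct Cauchy--Schwarz combined with the trace inequality would only give a bound of the form $\lesssim \|\bv\|_{1,\Omega}\,\|q_{T,h}\|_\Omega + h\,\|\bv\|_{1,\Omega}\,\|\nabla q_{T,h}\|_\Omega$, and the first term is of the same order as $\|q_{T,h}\|_\Omega^2$ and therefore not absorbable. The patchwise construction, enabled precisely by the constructed orthogonality of $\pi^c_h$, trades one power of $h^{1/2}$ for a gradient on $q_{T,h}$ via Poincaré and thereby renders the defect absorbable against the positive contribution from the interior test function $\tilde\bv$. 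The remaining Young-type balancing and the verification of $\tnormv{\bw_h} \lesssim \mu^{1/2}\|q_{T,h}\|_\Omega$ are more technical than conceptual.
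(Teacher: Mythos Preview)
Your proposal is correct and follows essentially the same approach as the paper: the same continuous inf-sup plus modified interpolant $\pi^c_h$ with its $\Gamma_s$-orthogonality, the same patchwise mean-subtraction to bound the $\Gamma_s$ defect by $\|h\nabla q_{T,h}\|_{\cT_h}$, and the same appeal to Assumption~\ref{Assumption_GO} together with the ghost-penalty extension~\eqref{eq:gp_extendpt}. The only cosmetic difference is that the paper derives two separate lower bounds on the supremum (one from $\pi^c_h\bv$, one from the interior test function) and combines them afterwards, whereas you form the combination $\bw_h=\pi^c_h\bv+\delta\tilde\bv$ first and divide once; both routes are equivalent.
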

\begin{proof}
  Let \(q_{T,h} \in Q_{T,h} \) be given. There exists \(\bv^q \in \bH^1_{\Gamma_d,0} \) such that $- \nabla\cdot\bv^q = q_{T,h}$ in $\Omega$, and \(\beta  \|\bv^q\|_1 \leqslant\|q_{T, h}\|_{\Omega} \) for a constant \( \beta > 0 \). Let \(\bv^q_{h} = \pi^c_{h} \bv^q \) where  \(\pi^c_{h} \) is the modified Scott-Zhang interpolant defined in \eqref{eq:modiefied_interpolant}. Recalling the definition of \( b_{1,h}\) \eqref{defn-b1}, an integration by parts and the fact that \( \bv^q = 0\) on \(\Gamma_d\) yield
  \begin{align}
    b_{1,h}(\bv^q_{h},q_{T,h}) 
    % &= -(\nabla \cdot \bv^q_h,q_{T,h})_{\Omega}  + (\bv^q_h \cdot \bn, q_{T,h})_{\Gamma_d} \\ 
    &=  -(\nabla \cdot (\bv^q_h-\bv^q),q_{T,h})_{\Omega}  + ((\bv^q_h - \bv^q)  \cdot \bn, q_{T,h})_{\Gamma_d} - (\nabla \cdot  \bv^q,q_{T,h})_{\Omega} \nonumber \\
    &=  ((\bv^q_h-\bv^q),\nabla q_{T,h})_{\Omega}  - ((\bv^q_h - \bv^q) \cdot \bn, q_{T,h})_{\Gamma_s} + \| q_{T,h}  \|^2_{\Omega}  \label{eq:line_in_infsup_proof}
    \\
    &= \text{I} + \text{II} + \| q_{T,h}  \|_{\Omega}^2 
    \geqslant \text{I} + \text{II} + \| q_{T,h}  \|_\Omega \, \beta \, \|\bv^q\|_{1,\Omega}.
    \label{eq:bh_infsup_proof_step_1}
  \end{align}
  Combining the Cauchy-Schwarz inequality with the interpolation estimate~\eqref{interp-T} shows that the first term $\text{I}$ can be bounded as
  \begin{align*}
   \abs{\text{I}}
    &\lesssim \| h^{-1} ( \bv^q_h - \bv^q)  \| _{\Omega} \| h \nabla q_{T,h}  \| _{\Omega}
    \lesssim  \| \bv^q  \| _{1,\Omega} \| h \nabla q_{T,h}\|_{\Omega}.  
  \end{align*}

  Turning to the second term \(\text{II}\), 
  we utilize
  the orthogonality property of the modified Scott-Zhang interpolant \eqref{eq:normal_ort_prop}
  by inserting the patch-wise constant pressure
  % \( \bar{q}_{i} = \frac{1}{|\cP_i|} \int_{\cP_i} q_{T,h} \mathrm{d}x  \),
  \( \bar{q}_{i} = \frac{1}{|\cP_i|} (q_{T,h}, 1)_{\cP_i}  \),
  leading to
  \begin{align*}
    \abs{\text{II}} &=  \left| \sum_{i=1}^N ((\bv^q_h - \bv^q) \cdot \bn, q_{T,h} - \bar{q}_{i} )_{\Gamma_s \cap \cP_i} \right|\\
    &\lesssim \sum_{i=1}^{N} {\| h^{-\onehalf}(\bv^q_h - \bv^q) \cdot  \bn \|_{\Gamma_s \cap \cP_i} } \| h^{\onehalf} ( q_{T,h} - \bar{q}_{i})\|_{\Gamma_s \cap \cP_i}  \\
    &\lesssim    {\| \bv^q\|_{1,\Omega}}   \| h \nabla  q_{T,h} \|_{\cT_h}, 
\end{align*}
where in the last line we used \eqref{interpolation_stab2}, the trace inequality \eqref{eq:trace-ineq-2}, and a local Poincar\'e--Friedrichs inequality.
By inserting the bounds for $|\text{I}|$ and $|\text{II}|$
into \eqref{eq:bh_infsup_proof_step_1}, dividing by $\|\bv^q\|_{1,\Omega}$ and 
finally applying the stability of the interpolation operator $\pi_h^c$ \eqref{interpolation_stability},
we deduce that
  \begin{align*}
    \frac{b_{1,h}(\bv^q_{h},q_{T,h})}{\tnormv{\bv^q_h} } \gtrsim \frac{b_{1,h}(\bv^q_{h},q_{T,h})}{\mu^{\onehalf}\| \bv^q  \| _{1, \Omega} }
    &\gtrsim  \mu^{-\onehalf}\|  q_{T,h}  \|_{\Omega}
    - \mu^{-\onehalf} \left(\| h \nabla q_{T,h}\|_{\Omega}
    + \| h \nabla  q_{T,h} \|_{\cT_h} \right), % \\
  \end{align*}
where we assumed that \( b_{1,h}(\bv^q_{h},q_{T,h}) \geqslant 0 \) by choosing the sign of \( \bv^q_{h} \) accordingly.
Consequently, we arrive at, for some $\tilde\beta_1 > 0$ and $\tilde\beta_2 > 0$,
  \begin{align}
    \sup_{\bv_h \in \bV_h} \frac{b_{1,h}(\bv_h, q_{T,h})}{\tnormv{\bv_h}} \geqslant \tilde{\beta}_1  \normt{q_{T,h}} - \tilde{\beta}_2 \mu^{-\onehalf} \| h \nabla  q_{T,h} \|_{\cT_h} \label{eq:proof_sup1}. 
  \end{align}
  Next, thanks to~\eqref{eq:gp_extendpt} and Assumption \ref{Assumption_GO}, there exists \(\bv_{h} \in V_{h}\) with \(\supp{\bv_{h}} \subset \cT_{h}^{i} \), such that 
  \begin{align*}
    \mu^{-\onehalf} \| h \nabla q_{T,h} \| _{\cT_h} 
    &\lesssim
    \mu^{-\onehalf} \| h \nabla q_{T,h} \| _{\Omega_{h}^i}  
    + |q_{T,h}|_{g_{2,h}}
    \lesssim
    \sup_{\bv_h \in \bV_{h}^{i}} \frac{(\nabla \cdot \bv_h,q)_{\cT_h^i}}{\mu^{\onehalf} \| \bv_h  \| _{1,\cT_h^i}}
    + |q_{T,h}|_{g_{2,h}}
    \lesssim 
    \sup_{\bv_h \in \bV_{h}^{i}}   \frac{b_{1,h}(\bv_{h},q_{T,h})}{ \tnormv{ \bv_{h}}} +  |q_{T,h}|_{g_{2,h}},
  \end{align*}
  where in the last step we used that for \( \bv_h \in V_h^i\) we have
  \( b_{1,h}(\bv_{h},q_{T,h}) = -(\nabla \cdot \bv_h, q_{T,h})_{\cT_h^i} \) and
  the norm equivalence \( \mu^{\onehalf}\| \bv_{h}  \|_{1,\cT_h^i}  \simeq   \tnormv{ \bv_{h}}\) for $\bv_h \in \bV_h^i$.
  In other words, it holds that
  \begin{align}
    \sup_{\bv_h \in \bV_h} \frac{b_{1,h}(\bv_h, q_{T,h})}{\tnormv{\bv_h}} \geqslant \tilde{\beta}_3    \| h \nabla  q_{T,h} \|_{\cT_h} - \tilde{\beta}_4  |q_{T,h}|_{g_{2,h}} \label{eq:proof_sup2},
  \end{align}
  for some positive constants \(\tilde{\beta}_3\) and \(\tilde{\beta}_4\).
  Now the inf-sup condition~\eqref{inf-sup} follows easily from combining~\eqref{eq:proof_sup1} and~\eqref{eq:proof_sup2}.
\end{proof}

Before we turn to the proof of the main inf-sup condition for the total bilinear form \(A_h(\cdot,\cdot)\), we want to make the following important remarks.

\begin{remark}[General inf-sup stable elements]
It is shown in \cite{GuzmanOlshanskii2016} that
Assumption~\ref{Assumption_GO} holds for various mixed finite element
spaces with continuous pressure spaces, including generalized Taylor-Hood elements, 
the Bercovier-Pironneau element, and the the Mini-element.
\end{remark}
\begin{remark}[Discontinuous pressure elements]
As shown in \cite{GuzmanOlshanskii2016}, inf-sup stable elements with
discontinuous pressure spaces can also be used, provided that the pressure norm
in Assumption~\ref{Assumption_GO} is modified to include
the pressure jump term $h^{\onehalf} \| \jump{p_h}\|_{\cF_h^i(\cT_h^i)}$ 
across interior faces associated with the interior mesh $\cT_h^i$.
The integration by parts step in the proof of the intermediate
modified inf-sup condition~\eqref{eq:proof_sup1} 
will then lead to an additional term of the form
$-\widetilde{\beta}_5 h^{\onehalf} \| \jump{p_h}\|_{\cF_h^i\cap{\Omega}}$ on the right-hand side
of \eqref{eq:proof_sup1}.
Consequently, the ghost penalty stabilization for the total pressure must then be designed
to control this additional term so that
\begin{align*}
  \|h\nabla p_{T,h} \|_{\cT_h} + h^{\onehalf} \| \jump{p_h}\|_{\cF_h^i\cap{\Omega}} \lesssim \|h\nabla p_{T,h} \|_{\cT_h^i} + \mu^{\onehalf} |p_{T,h}|_{g_{2,h}} \lesssim \| p_{T,h} \|_{\cT_h}, 
  \quad \forall p_{T,h} \in Q_{T,h}, 
\end{align*}
holds instead of only \eqref{eq:gp_extendpt}, where $\|h\nabla p_{T,h} \|_{\cT_h}$ is understood as a broken norm. This is automatically satisfied for 
the local projection and volume based ghost penalties, as well as for the face-based ghost penalties
since we included index $j=0$ in the summation.
As a result, the presented analysis also covers inf-sup stable elements with discontinuous pressure spaces,
such as generalized conforming Crouzeix-Raviart elements, 
the $\bbP_{\mathrm{c}}^{k+d}$-$\bbP_{\mathrm{dc}}^k$ elements in $d=2,3$ dimensions, and the Bernardi-Raugel element.
See \cite[Section 6]{GuzmanOlshanskii2016} for details.
\end{remark}
\begin{remark}
  The proof of the modified inf-sup condition \eqref{inf-sup} for \(b_{1,h}(\cdot,\cdot)\)
  varies significantly from the one presented in \cite{GuzmanOlshanskii2016}.
  First, by exploiting the extension property of the total pressure ghost penalty with
  respect to the $h$-weighted gradient norm, we avoid the discussion of handling pressure averages
  with respect to different domains (which is not required in our setting anyhow because of the mixed boundary conditions).
  On the other hand, our analysis is complicated by the presence of $\Gamma_s$ / mixed boundary conditions, 
  as we now need to handle the boundary terms on $\Gamma_s$ appearing after integration by parts in \eqref{eq:line_in_infsup_proof}.
  To translate these terms into a bound involving the $h$-weighted gradient norm of the pressure,
  we make use of the orthogonality property of the modified Scott-Zhang interpolant \eqref{eq:normal_ort_prop}.
\end{remark}
%For simplicity of the presentation we consider the case $\bu_D = 0$, $g_N = 0$, $\bsigma_N = 0$, and $p_D = 0$.

\begin{theorem} \label{thm:inf-sup_A}
  For all \((\bu_h,p_{T,h},p_{F,h})  \in \bV_h \times Q_{T,h} \times Q_{F,h}  \) it holds that 
  \begin{align} \label{eq:inf-sup-big-A}
  \sup_{(\bv_h,q_{T,h},q_{F,h})  \in \bV_h \times Q_{T,h} \times Q_{F,h}} \frac{A_{h}\bigl((\bu_h,p_{T,h},p_{F,h}),(\bv_h,q_{T,h},q_{F,h})\bigr)}{\tn(\bv_h,q_{T,h},q_{F,h})\tn_h} \gtrsim   \tn{(\bu_{h},p_{T,h},p_{F,h})} \tn_h.
  \end{align}
\end{theorem}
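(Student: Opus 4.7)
The plan is to test against a triple of the form
\((\bv_h^\star, q_{T,h}^\star, q_{F,h}^\star) = (\bu_h + \delta \bw_h, -p_{T,h}, -p_{F,h})\),
where \(\bw_h \in \bV_h\) is supplied by the modified inf-sup condition~\eqref{inf-sup}
and \(\delta > 0\) is a small parameter to be fixed later. With this choice the two
occurrences of \(b_{1,h}(\bu_h, p_{T,h})\) cancel, leaving the diagonal contributions
\(A_{1,h}(\bu_h,\bu_h)\), \(A_{2,h}(p_{T,h},p_{T,h})\), \(A_{3,h}(p_{F,h},p_{F,h})\),
the indefinite cross term \(-2 b_{2,h}(p_{F,h},p_{T,h})\), and the inf-sup-driven terms
\(\delta A_{1,h}(\bu_h,\bw_h)\) and \(\delta b_{1,h}(\bw_h, p_{T,h})\).

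The first key ingredient is a parameter-robust treatment of the pressure--pressure block.
Using \(b_{2,h}(p_{F},p_{T}) = \lambda^{-1}(p_{F},p_{T})_\Omega\), the splitting
\(A_{3,h} = A^1_{3,h} + A^2_{3,h}\), and the explicit form
\(A^2_{3,h}(p_{F,h},p_{F,h}) = \tfrac{2}{\lambda}\|p_{F,h}\|_\Omega^2 + \tfrac{1}{\lambda}|p_{F,h}|_{g_h}^2\),
one checks the sum-of-squares identity
\begin{equation*}
A_{2,h}(p_{T,h},p_{T,h}) + A^2_{3,h}(p_{F,h},p_{F,h}) - 2 b_{2,h}(p_{F,h},p_{T,h})
= \tfrac{1}{\lambda}\|p_{T,h}-p_{F,h}\|_\Omega^2 + \tfrac{1}{\lambda}\|p_{F,h}\|_\Omega^2 + |p_{T,h}|_{g_{2,h}}^2 + \tfrac{1}{\lambda}|p_{F,h}|_{g_h}^2.
\end{equation*}
The right-hand side is manifestly non-negative and, combined with the Nitsche coercivity of
\(A^1_{3,h}\) supplied by Lemma~\ref{lemma:bilinear_bounds} together with the \(L^2\) ghost-penalty
extension~\eqref{eq:ass-ep3b} applied to \(g_h\), produces a lower bound
\(\gtrsim \tnormf{p_{F,h}}^2 + |p_{T,h}|_{g_{2,h}}^2\). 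Crucially, this disposes of the indefinite cross
term without any Young-type splitting that would pit the \(\lambda^{-1}\) weight on \(\|p_{F,h}\|_\Omega^2\)
against the one on \(\|p_{T,h}\|_\Omega^2\) and thereby spoil the \(\lambda\)-robustness.

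The contribution still missing to control \(\tnormt{p_{T,h}}^2 \sim \|p_{T,h}\|_\Omega^2 + |p_{T,h}|_{g_{2,h}}^2\)
is the \(\|p_{T,h}\|_\Omega^2\) part, which is recovered from \(\delta b_{1,h}(\bw_h, p_{T,h})\). Using~\eqref{inf-sup},
choose \(\bw_h \in \bV_h\) (with a sign flip if necessary) normalized so that
\(\tnormv{\bw_h} \sim \mu^{\onehalf}\|p_{T,h}\|_\Omega\) and
\(b_{1,h}(\bw_h, p_{T,h}) \geqslant \beta_1 \|p_{T,h}\|_\Omega^2 - \beta_2 |p_{T,h}|_{g_{2,h}} \|p_{T,h}\|_\Omega\).
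The resulting cross term \(\delta A_{1,h}(\bu_h, \bw_h)\) is bounded via the continuity estimate in
Lemma~\ref{lemma:bilinear_bounds} and absorbed by Young's inequality into \(A_{1,h}(\bu_h,\bu_h)\) together
with a small fraction of \(\delta \beta_1 \|p_{T,h}\|_\Omega^2\); the ghost defect
\(\delta \beta_2 |p_{T,h}|_{g_{2,h}} \|p_{T,h}\|_\Omega\) is analogously absorbed into the
\(|p_{T,h}|_{g_{2,h}}^2\) contribution already available from \(A_{2,h}\). Choosing \(\delta>0\) small
enough (depending on \(\mu\) but independent of \(\lambda\), \(K\), \(h\)) leaves a strictly positive
multiple of \(\|p_{T,h}\|_\Omega^2\), which combined with the already controlled \(|p_{T,h}|_{g_{2,h}}^2\)
yields the full \(\tnormt{p_{T,h}}^2\).

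The proof is then concluded by a norm bound on the test triple: by the triangle inequality and the
normalization \(\tnormv{\delta \bw_h} \lesssim \|p_{T,h}\|_\Omega \lesssim \tnormt{p_{T,h}}\) one has
\(\tn{(\bv_h^\star, q_{T,h}^\star, q_{F,h}^\star)}\tn_h \lesssim \tn{(\bu_h, p_{T,h}, p_{F,h})}\tn_h\),
so that dividing the assembled lower bound \(A_h(\cdot,\cdot) \gtrsim \tn{(\bu_h, p_{T,h}, p_{F,h})}\tn_h^2\)
by this quantity yields~\eqref{eq:inf-sup-big-A}. The main obstacle is the parameter-robust handling of
the indefinite \(b_{2,h}\) coupling; the sum-of-squares identity is what makes this succeed, with the
inf-sup perturbation then only needed to add back the \(\|p_{T,h}\|_\Omega^2\) control that the
\(A_{2,h}\)-block alone cannot provide.
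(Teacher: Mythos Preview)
Your proposal is correct and follows essentially the same route as the paper: the same test triple \((\bu_h+\delta\bw_h,-p_{T,h},-p_{F,h})\), the same sum-of-squares identity for the pressure--pressure block to handle the indefinite \(b_{2,h}\) coupling robustly, and the same use of the modified inf-sup condition~\eqref{inf-sup} plus Young absorption to recover the missing \(\|p_{T,h}\|_\Omega^2\) control. Minor presentational differences (the paper normalizes \(\tnormv{\bw_h}=\normt{p_{T,h}}\) rather than \(\tnormv{\bw_h}\sim\mu^{1/2}\|p_{T,h}\|_\Omega\), and invokes~\eqref{A3-coercive} directly rather than unpacking the Nitsche coercivity of \(A^1_{3,h}\) together with~\eqref{eq:ass-ep3b}) do not affect the argument.
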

\begin{proof}
  For a given \(  (\bu_h,p_{T,h},p_{F,h})\) we want to construct functions \((\bv_h, q_{T,h}, q_{F,h})\) satisfying 
  \begin{align*}
    A_{h}\bigl((\bu_h,p_{T,h},p_{F,h}),(\bv_h,q_{T,h},q_{F,h}\bigr)) \gtrsim 
    \tn{(\bu_{h},p_{T,h},p_{F,h})} \tn_h
    \tn(\bv_h,q_{T,h},q_{F,h})\tn_h.
  \end{align*}
  \textbf{Step 1.}
First, we let \( (\bv_h, q_{T,h}, q_{F,h}) =(\bu_h,-p_{T,h}, -p_{F,h}) \) in the definition of $A_h$ \eqref{eq:total_bilinear_form}. Using
the coercivity properties \eqref{A1-coercive} and \eqref{A3-coercive} and
the identity (cf. \eqref{defn-b2}, \eqref{defn-a2}, and \eqref{defn-a3-2})
\begin{align*}
  a_{2,h}(p_{T,h}, p_{T,h}) - 2 b_{2,h}(p_{F, h}, p_{T,h}) + a^2_{3,h}(p_{F,h}, p_{F,h}) =  \frac{1}{\lambda}\|p_{T,h } -p_{F,h} \|_{\Omega}^2 + \frac{1}{\lambda}\|p_{F,h }\|_{\Omega}^2,
\end{align*}
we obtain
\begin{align}
  &A_{h}((\bu_h,p_{T,h},p_{F,h}),(\bu_h,-p_{T,h},-p_{F,h})) \nonumber \\ 
  &= A_{1,h}(\bu_h, \bu_h) +  A_{2,h}(p_{T,h}, p_{T,h})- 2 b_{2,h}(p_{F, h}, p_{T,h}) +  A_{3,h}(p_{F,h}, p_{F,h}) \nonumber  \\
  &= A_{1,h}(\bu_h, \bu_h)  + |p_{T,h}|^2_{g_{2,h}}   + \frac{1}{\lambda}\|p_{T,h } -p_{F,h} \|_{\Omega}^2 +  \frac{1}{\lambda}\|p_{F,h }\|_{\Omega}^2  +   A^1_{3,h}(p_{F,h}, p_{F,h}) + |p_{F,h}|^2_{g^2_{3,h}} \nonumber \\
  &\geqslant c_{A_1} \tnormv{\bu_h}^2  + |p_{T,h}|^2_{g_{2,h}} + c_{A_3} \tnormf{p_{F,h}}^2 . 
  \label{eq:inf-sup-A-step1}
\end{align}
\textbf{Step 2.}
Next, we choose \( \bv_h = \bw_{h} \)
where \(\bw_{h}\) satisfies the modified inf-sup
condition~\eqref{inf-sup} for \(p_{T,h}\) and is rescaled such that
\(\tnormv{\bw_h} =  \normt{p_{T,h}}\). Utilizing \eqref{A1-bounded} and \eqref{inf-sup} leads to
\begin{align}
  A_{h}&\bigl((\bu_h,p_{T,h},p_{F,h}),(\bw_{h},0,0)\bigr)  
  = A_{1,h}(\bu_h, \bw_h) + b_{1,h}(\bw_h, p_{T,h}) \nonumber
  \\
  &\geqslant -C_{A_1}\tnormv{\bu_h}\tnormv{\bw_h}  + \beta_1  \normt{p_{T,h}}\tnormv{\bw_h} -  \beta_2  |p_{T,h}|_{g_{2,h}}\tnormv{\bw_h} \nonumber\\
  &= -C_{A_1} \tnormv{\bu_h} \normt{p_{T,h}}  + \beta_1    \normt{p_{T,h}}^2 -  \beta_2  |p_{T,h}|_{g_{2,h}}\normt{p_{T,h}} \nonumber
  \\
  &\geqslant 
  - \frac{C_{A_1}}{4\varepsilon}\tnormv{\bu_h}^2 
  + ( \beta_1   - C_{A_1} \varepsilon - \beta_2  \varepsilon )\normt{p_{T,h}}^2   
  -  \frac{\beta_2  }{4\varepsilon}|p_{T,h}|^2_{g_{2,h}} \nonumber
  \\
  &\geqslant - \frac{C_{A_1}(C_{A_1}+\beta_2  )}{2\beta_1  }\tnormv{\bu_h}^2 + \dfrac{\beta_1  }{2}\normt{p_{T,h}}^2
  -  \frac{\beta_2  (C_{A_1}+\beta_2 )}{2\beta_1  }|p_{T,h}|^2_{g_{2,h}},
  \label{eq:inf-sup-A-step2}
\end{align} 
where in the last 2 steps, an \( \varepsilon \)-scaled Young's inequality was used together with the choice \( \varepsilon = \frac{\beta_1   }{2(C_{A_1}+\beta_2  )} \). 

\noindent
\textbf{Step 3.}
Finally, combining the bounds~\eqref{eq:inf-sup-A-step1} and ~\eqref{eq:inf-sup-A-step2} and recalling the definition \eqref{defn-tri-norm} of
$\tn{(\bu_{h},p_{T,h},p_{F,h})} \tn_h$,
we see that for \(\delta >0 \)
\begin{align*}
  A_{h}((\bu_h,p_{T,h},p_{F,h}),(\bu_h+ \delta \bw_h,-p_{T,h},-p_{F,h}))
  &\geqslant 
  \left( c_{A_1} - \delta \frac{ C_{A_1}(C_{A_1} + \beta_2 )}{2\beta_1  }
  \right) \tnormv{\bu_h}^2  + \delta \frac{\beta_1  }{2}   \normt{p_{T,h}}^2  
  \\
  &\quad
  + \left(1-\delta \frac{\beta_2  (C_{A_1}+\beta_2  )}{2\beta_1}\right)|p_{T,h}|^2_{g_{2,h}} 
  + c_{A_{3}} \tnormf{p_{F,h}}^2.
\end{align*}
Choosing 
\( \delta < \min\left\{ \frac{\beta_1   c_{A_1}}{C_{A_1}(C_{A_1} + \beta_2  )}, \frac{\beta_1}{\beta_2   (C_{A_1} + \beta_2  )} \right\} \), 
we conclude that there is a constant \( c_A > 0 \) such that 
\begin{align*}
  &A_{h}\bigl((\bu_h,p_{T,h},p_{F,h}),(\bu_h+ \delta \bw_h,-p_{T,h},-p_{F,h})\bigr)
  \geqslant c_A \tn{(\bu_{h},p_{T,h},p_{F,h})} \tn_h^2 
  \\
  &\qquad\geqslant c_A (1+\delta)^{-1} \tn{(\bu_{h},p_{T,h},p_{F,h})} \tn_h \tn (\bu_h+ \delta \bw_h,-p_{T,h},-p_{F,h})\tn_h.
\end{align*}
where in the last step we used that
% \begin{align}
$
\tn (\bu_h+ \delta \bw_h,-p_{T,h},-p_{F,h})\tn_h \leqslant 
% \tn (\bu_h,-p_{T,h},-p_{F,h})\tn  + \delta \|p_{T,h}\|_{Q_{T,h,\Omega}} \leqslant 
(1+\delta)\tn{(\bu_{h},p_{T,h},p_{F,h})} \tn_h.
$
\end{proof}

\begin{corollary}
  Method \eqref{eq:biot-total-press-weak-disc-ghost} has a unique solution satisfying
  \begin{equation} \label{stab-bound}
    \begin{split}
  \tnormv{\bu_h} + \tnormt{p_{T,h}} + \tnormf{p_{F,h}} &\lesssim  \mu^{-\onehalf} \|\bf\|_{\Omega} +\mu^{\onehalf} \norm{h^{-\onehalf} \bu_D}_{\Gamma_d}
   + \norm{ \bsigma_N }_{\tilde H^{-\onehalf}(\Gamma_s)} \\
   &\quad + \lambda^{\onehalf}\|g\|_{\Omega}  
     + K^{\onehalf} \norm{h^{-\onehalf} p_{F,D}}_{\Gamma_s} + \norm{ g_N }_{\tilde H^{-\onehalf}(\Gamma_d)} . 
    \end{split}
    \end{equation}
  \end{corollary}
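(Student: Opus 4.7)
The plan is to deduce the corollary directly from the discrete inf-sup condition in Theorem \ref{thm:inf-sup_A} combined with suitable bounds on the linear functionals $L_1$, $L_2$, $L_3$. Since the problem is linear and finite-dimensional, the inf-sup condition \eqref{eq:inf-sup-big-A} implies that the associated linear operator is injective, hence invertible, which immediately gives existence and uniqueness of a solution $(\bu_h, p_{T,h}, p_{F,h})$.

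For the stability bound, I would take the discrete solution itself on the left of the inf-sup, so that
\begin{equation*}
  \tn(\bu_h,p_{T,h},p_{F,h})\tn_h
  \lesssim
  \sup_{(\bv_h,q_{T,h},q_{F,h})}
  \frac{L_1(\bv_h) + L_2(q_{T,h}) + L_3(q_{F,h})}{\tn(\bv_h,q_{T,h},q_{F,h})\tn_h},
\end{equation*}
and then bound each contribution on the right-hand side by the corresponding data norm times $\tn(\bv_h,q_{T,h},q_{F,h})\tn_h$. This is essentially a bookkeeping exercise: Cauchy--Schwarz for the volume source terms, duality for the $\widetilde{H}^{-\onehalf}$ pairings involving $\bsigma_N$ and $g_N$, and a combination of Cauchy--Schwarz with the inverse trace inequality \eqref{eq:inverse-estimates-II}--\eqref{eq:inverse-estimates-III} for the Nitsche penalty and consistency terms on $\Gamma_d$ and $\Gamma_s$.

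Concretely, for $L_1$ I would estimate $(\bf,\bv_h)_\Omega \lesssim \mu^{-\onehalf}\|\bf\|_\Omega \tnormv{\bv_h}$ using Korn plus the Nitsche boundary contribution in $\tnormv{\cdot}$; for the stress term, $(\bsigma_N,\bv_h)_{\Gamma_s}$ is bounded via the trace theorem $\|\bv_h\|_{H^{\onehalf}(\Gamma_s)} \lesssim \|\bv_h\|_{H^1(\Omega)}$ combined again with Korn to give the desired $\|\bsigma_N\|_{\widetilde{H}^{-\onehalf}(\Gamma_s)}$ contribution; the two Dirichlet boundary terms scale as $\mu^{\onehalf}\|h^{-\onehalf}\bu_D\|_{\Gamma_d}\tnormv{\bv_h}$ after applying the inverse trace estimate $\mu\|h^{\onehalf}\varepsilon(\bv_h)\cdot\bn\|_{\Gamma_d}\lesssim \mu^{\onehalf}\tnormv{\bv_h}$. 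The term $L_2(q_{T,h})$ yields the extra factor $1$ in $(\mu^{\onehalf}+1)$ by writing $(\bu_D\cdot\bn,q_{T,h})_{\Gamma_d}\lesssim \|h^{-\onehalf}\bu_D\|_{\Gamma_d}\|h^{\onehalf}q_{T,h}\|_{\Gamma_d}$ and applying the discrete trace inequality together with the ghost-penalty equivalence $\tnormt{q_{T,h}}\sim \|q_{T,h}\|_{\cT_h}$. For $L_3$, the volume term $(g,q_{F,h})_\Omega$ picks up the $\lambda^{\onehalf}$ because $\tnormf{q_{F,h}}$ only controls $\lambda^{-\onehalf}\|q_{F,h}\|_\Omega$, while the $p_{F,D}$ contributions combine inverse trace with the $K^{\onehalf}\|\nabla q_{F,h}\|$ and $K^{\onehalf}\|h^{-\onehalf}q_{F,h}\|_{\Gamma_s}$ components of $\tnormf{\cdot}$, producing the $K^{\onehalf}\|h^{-\onehalf}p_{F,D}\|_{\Gamma_s}$ factor; finally $(g_N,q_{F,h})_{\Gamma_d}$ is handled via duality and the trace embedding.

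The main technical obstacle is making sure the parameter-weighted discrete norms $\tnormv{\cdot}$, $\tnormt{\cdot}$, $\tnormf{\cdot}$ actually control the boundary traces of the test functions with the correct scalings, particularly in the unfitted setting where $\Gamma$ cuts through elements arbitrarily. This is precisely what the ghost-penalty extension properties (Assumptions \textbf{A1}--\textbf{A4}) are designed to ensure, so I would invoke them to pass from traces on $\Gamma$ restricted to a single element to full $\cT_h$-norms, and only then apply the standard inverse/trace inequalities \eqref{eq:inverse-estimates-I}--\eqref{eq:inverse-estimates-III}. Once all six data terms have been bounded in this way, summing the estimates and dividing by $\tn(\bv_h,q_{T,h},q_{F,h})\tn_h$ yields exactly the claimed inequality.
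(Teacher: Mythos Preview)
Your proposal is correct and follows essentially the same route as the paper: invoke the inf-sup condition of Theorem~\ref{thm:inf-sup_A} for existence and uniqueness, replace $A_h$ by $L_1+L_2+L_3$ using that $(\bu_h,p_{T,h},p_{F,h})$ solves~\eqref{eq:biot-total-press-weak-disc-ghost}, and then bound each linear functional against the corresponding component of the test-function triple norm. In fact you give considerably more detail on how to estimate the individual data terms than the paper's proof, which simply records the final bound without spelling out the Cauchy--Schwarz, duality, and inverse-trace steps.
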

\begin{proof}
We first establish \eqref{stab-bound}. Using the reformulation \eqref{method-v2}, for a solution \( (\bu_h,p_{T,h},p_{F,h}) \) of \eqref{eq:biot-total-press-weak-disc-ghost}, Theorem~\ref{thm:inf-sup_A} implies
\begin{align*}
    \tn{(\bu_{h},p_{T,h},p_{F,h})} \tn_h &\lesssim 
      \sup_{(\bv_h,q_{T,h},q_{F,h})  \in \bV_h \times Q_{T,h} \times Q_{F,h}} \frac{A_{h}\bigl((\bu_h,p_{T,h},p_{F,h}),(\bv_h,q_{T,h},q_{F,h})\bigr)}{\tn(\bv_h,q_{T,h},q_{F,h})\tn_h}\\
      &=   \sup_{(\bv_h,q_{T,h},q_{F,h})  \in \bV_h \times Q_{T,h} \times Q_{F,h}} \frac{L_1(\bv_h)+L_2(q_{T,h})+L_3(q_{F,h})}{\tn(\bv_h,q_{T,h},q_{F,h})\tn_h},
\end{align*}
which, together with the continuity bounds \eqref{L1-bound}--\eqref{L3-bound}, implies \eqref{stab-bound}. Existence and uniqueness of the solution follow from \eqref{stab-bound} by setting the data to zero and concluding that \( (\bu_h,p_{T,h},p_{F,h})  = (\mathbf{0},0,0)\).
\end{proof}

\subsection{Error estimates}
With the inf-sup condition for the total bilinear form $A_h(\cdot,\cdot)$ at hand, 
an a priori error estimate can now be derived combining a weak
Galerkin orthogonality with the approximation properties of the
interpolation operator and the weak consistency of the CutFEM stabilization terms.

\begin{lemma}[Weak Galerkin orthogonality] 
 Let \((\bu ,p_T ,p_{F}) \in H^r(\Omega) \times H^s(\Omega) \times H^t(\Omega)  \) be the solution of the Biot problem \eqref{eq:biot-total-press-weak-disc}, and let \((\bu_h,p_{T,h},p_{F,h}) \in \bV_h \times Q_{T,h} \times Q_{F,h}  \) be solution of the discrete stabilized formulation \eqref{eq:biot-total-press-weak-disc-ghost}. Then 
  \begin{align} \label{eq:weak_galerkin}
    B_{h}((\bu - \bu_h,p_T - p_{T,h},p_{F} - p_{F,h}),(\bv_h,q_{T,h},q_{F,h}))  = G_h((\bu_h,p_{T,h},p_{F,h}),(\bv_h,q_{T,h},q_{F,h})),
  \end{align}
  for all \( (\bv_h,q_{T,h},q_{F,h}) \in \bV_h \times Q_{T,h} \times Q_{F,h}  \). 
\end{lemma}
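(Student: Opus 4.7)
The plan is to derive the weak Galerkin orthogonality as the difference of two equations: a consistency identity satisfied by the exact solution (in terms of the unstabilized form $B_h$) and the discrete equation satisfied by $(\bu_h, p_{T,h}, p_{F,h})$ (in terms of the stabilized form $A_h$).

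First, I would establish the key consistency property that the exact solution $(\bu, p_T, p_F)$ of \eqref{eq:biot-total-press-I}--\eqref{eq:biot-total-press-III} with boundary conditions \eqref{eq:bc-u-diri-inhom}--\eqref{eq:bc-p_F-diri-inhom} satisfies
\begin{equation*}
  B_h\bigl((\bu, p_T, p_F), (\bv_h, q_{T,h}, q_{F,h})\bigr) = L_1(\bv_h) + L_2(q_{T,h}) + L_3(q_{F,h})
\end{equation*}
for all $(\bv_h, q_{T,h}, q_{F,h}) \in \bV_h \times Q_{T,h} \times Q_{F,h}$. This is obtained by the same integration-by-parts sequence used to derive~\eqref{eq:biot-total-press-weak-disc}, but now applied to the exact (sufficiently regular) solution. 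The natural conditions \eqref{eq:bc-total-stress-inhom} and \eqref{eq:bc-p_F-neum-inhom} directly supply the Neumann contributions appearing in $L_1$ and $L_3$. The Nitsche symmetrization and penalty terms on $\Gamma_d$ and $\Gamma_s$ involving $\bu$ and $p_F$ match their right-hand-side counterparts exactly because of the Dirichlet traces $\bu|_{\Gamma_d} = \bu_D$ and $p_F|_{\Gamma_s} = p_{F,D}$; likewise, the symmetry term $(\bu_h \cdot \bn, q_{T,h})_{\Gamma_d}$ added in the derivation of the second equation is consistent since it matches $L_2$ through $\bu|_{\Gamma_d} = \bu_D$.

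Second, I would invoke the definition of the discrete method~\eqref{eq:biot-total-press-weak-disc-ghost} together with the decomposition $A_h = B_h + G_h$ in~\eqref{eq:total_bilinear_form} to write
\begin{equation*}
  B_h\bigl((\bu_h, p_{T,h}, p_{F,h}), (\bv_h, q_{T,h}, q_{F,h})\bigr) + G_h\bigl((\bu_h, p_{T,h}, p_{F,h}), (\bv_h, q_{T,h}, q_{F,h})\bigr) = L_1(\bv_h) + L_2(q_{T,h}) + L_3(q_{F,h}).
\end{equation*}
Subtracting this identity from the consistency identity and exploiting the linearity of $B_h$ in its first argument yields exactly~\eqref{eq:weak_galerkin}.

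The only real subtlety is step one: verifying that every Nitsche boundary contribution in $B_h$ either vanishes or is replicated in $L_1, L_2, L_3$ when evaluated on $(\bu, p_T, p_F)$. This is a bookkeeping exercise, but it must be carried out carefully for each bilinear form in the definition of $B_h$, paying particular attention to the symmetrization terms on $\Gamma_d$ and $\Gamma_s$ and to the symmetrization term $(\bu \cdot \bn, q_{T,h})_{\Gamma_d}$ added in $b_{1,h}$; all of these are seen to match the corresponding data contributions in $L_1, L_2, L_3$ by the Dirichlet conditions \eqref{eq:bc-u-diri-inhom} and \eqref{eq:bc-p_F-diri-inhom}. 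Note also that the ghost penalty $G_h$ never appears in the consistency identity for the exact solution since it only acts on discrete functions—this is precisely why the defect on the right-hand side of \eqref{eq:weak_galerkin} involves $(\bu_h, p_{T,h}, p_{F,h})$ rather than the exact solution, and why it reflects the weak (rather than strong) nature of the Galerkin orthogonality.
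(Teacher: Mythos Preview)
The paper states this lemma without proof, so there is no original argument to compare against. Your proposal is correct and is precisely the standard route: verify consistency of the unstabilized Nitsche formulation $B_h$ for the exact solution (this is exactly how \eqref{eq:biot-total-press-weak-disc} was derived, so the bookkeeping of the Nitsche and symmetrization terms goes through as you describe), then subtract the stabilized discrete equation $A_h = B_h + G_h$ to obtain \eqref{eq:weak_galerkin}.
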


\begin{theorem}[A priori error estimate]\label{thm:error}
  Let \((\bu ,p_T ,p_{F}) \in H^r(\Omega) \times H^s(\Omega) \times H^t(\Omega) \) be the solution of the Biot problem \eqref{eq:biot-total-press-weak-disc}, and let \((\bu_h,p_{T,h},p_{F,h}) \in \bV_h \times Q_{T,h} \times Q_{F,h}  \) be the solution of the discrete stabilized formulation \eqref{eq:biot-total-press-weak-disc-ghost}. Then with \( \bar{k} + 1 = \min\{k+1,r,s\} \) and \( \bar{l}+1= \min\{l+1,t\}\)  the error \((\bu - \bu_h,p_T - p_{T,h},p_{F} - p_{F,h})\) satisfies 
  \begin{align*}
    \| (\bu - \bu_h,p_T - p_{T,h},p_{F} - p_{F,h})\|_{*} &\lesssim \mu^{\onehalf} h^{\bar{k}}  |\bu|_{\bar{k}+1,\Omega} + \left( \mu^{-\onehalf} + \lambda^{-\onehalf} + \lambda^{-1} \mu^{\onehalf} \right) h^{\bar{k}}  |p_T|_{\bar{k},\Omega} \\
     & \quad + \left( K^{\onehalf}  + \lambda^{-\onehalf} h+ \lambda^{-1}\mu^{\onehalf} h \right) h^{\bar{l}} |p_F|_{\bar{l}+1,\Omega}.
  \end{align*}
\end{theorem}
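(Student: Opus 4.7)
The plan is to follow a now-classical CutFEM-style \emph{a priori} argument: split the error into an interpolation part and a discrete part, bound the interpolation part directly from the interpolation lemma, and then control the discrete part via the inf-sup condition of Theorem~\ref{thm:inf-sup_A}. Concretely, write
\[
(\bu - \bu_h,p_T-p_{T,h},p_F-p_{F,h}) = \eta + \chi_h,
\]
where $\eta = (\bu - \pi_h^c \bu,\, p_T - \pi_h p_T,\, p_F - \pi_h p_F)$ is the interpolation error and $\chi_h = (\pi_h^c \bu - \bu_h,\, \pi_h p_T - p_{T,h},\, \pi_h p_F - p_{F,h}) \in \bV_h\times Q_{T,h}\times Q_{F,h}$. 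The interpolation estimates collected earlier already give $\|\eta\|_* \lesssim \mu^{\onehalf} h^{\bar{k}}|\bu|_{\bar{k}+1} + h^{\bar{k}}|p_T|_{\bar{k}} + (K^{\onehalf}h^{\bar{l}} + \lambda^{-\onehalf}h^{\bar{l}+1})|p_F|_{\bar{l}+1}$, which already accounts for most terms appearing in the theorem.

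For $\chi_h$ I would apply Theorem~\ref{thm:inf-sup_A} to obtain
\[
\tn \chi_h \tn_h \lesssim \sup_{\mathbf{v}_h} \frac{A_h(\chi_h;\mathbf{v}_h)}{\tn \mathbf{v}_h \tn_h},
\]
and then rewrite the numerator using $A_h = B_h + G_h$ together with the weak Galerkin orthogonality of Lemma~\ref{eq:weak_galerkin}. Denoting $\bar w = (\pi_h^c\bu,\pi_h p_T,\pi_h p_F)$, a short manipulation (inserting $\pm \bu$ etc.\ and using that $(\bu-\bu_h,\ldots)$ satisfies $B_h(\cdot;\mathbf{v}_h) = G_h(\bu_h,\ldots;\mathbf{v}_h)$) yields
\[
A_h(\chi_h;\mathbf{v}_h) = -B_h(\eta;\mathbf{v}_h) + G_h(\bar w;\mathbf{v}_h).
\]
The first term is controlled by the continuity estimates of Lemma~\ref{lemma:bilinear_bounds}: each of $a_{1,h}$, $a_{2,h}$, $a_{3,h}$, $b_{1,h}$ and $b_{2,h}$ is bounded by a starred norm of the corresponding component of $\eta$ times a triple-bar norm of $\mathbf{v}_h$, after noting that $\|\mathbf{v}_h\|_* \lesssim \tn \mathbf{v}_h\tn_h$ for discrete arguments via the inverse estimate~\eqref{eq:inverse-estimates-III} applied to the $\mu\|h^{\onehalf}\bn\cdot\nabla \bv_h\|_{\Gamma_d}^2$ and $K\|h^{\onehalf}\bn\cdot\nabla q_{F,h}\|_{\Gamma_s}^2$ contributions. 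The second term is bounded by Cauchy-Schwarz in the ghost-penalty seminorm, $|G_h(\bar w;\mathbf{v}_h)| \leqslant |\bar w|_{G_h} |\mathbf{v}_h|_{G_h} \leqslant |\bar w|_{G_h} \tn \mathbf{v}_h\tn_h$, and then the weak consistency \textbf{A2}, applied component-wise to the four pieces of $G_h$ with their scalings $\mu$, $h^2$, $K$, $\lambda^{-1}$, produces exactly interpolation-type terms compatible with $\|\eta\|_*$.

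Combining these two bounds with the inf-sup gives $\tn\chi_h\tn_h \lesssim \|\eta\|_*$ (up to the relevant parameter-weighted interpolation quantities). A final triangle inequality $\|(\bu-\bu_h,\ldots)\|_* \leqslant \|\eta\|_* + \|\chi_h\|_*$, together with the comparison $\|\chi_h\|_* \lesssim \tn\chi_h\tn_h$ that again rests on the inverse estimate~\eqref{eq:inverse-estimates-III} for the normal derivative trace terms in $\|\cdot\|_*$, yields the stated estimate.

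The main technical obstacle will be the careful bookkeeping of the parameter weights $\mu,\lambda,K$: the cross-terms $b_{2,h}$, $a_{2,h}$ both carry a $\lambda^{-1}$ factor and interact the $p_F$ and $p_T$ components, and the total pressure norm $\normtast{\cdot}$ contains no $\lambda$ scaling, so one must verify that each such term can be bounded by a factor of $\|\eta\|_*$ that appears in the theorem. A similar accounting is needed for the ghost penalty pieces: since $g_{3,h}^2 = \lambda^{-1}g_h$ gives $|\pi_h p_F|_{g_{3,h}^2} \lesssim \lambda^{-\onehalf} h^{\bar l}|p_F|_{\bar l+1}$, one must absorb this into the $K^{\onehalf}h^{\bar l}$ and $\lambda^{-\onehalf}h^{\bar l+1}$ contributions already present in the $\normfast\cdot$ interpolation bound. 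Once this scaling is handled, the rest of the proof is a routine assembly of the already-established continuity, orthogonality, inf-sup and interpolation ingredients.
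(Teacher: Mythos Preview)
Your proposal is essentially the paper's proof: split into interpolation and discrete parts, control the discrete part via the inf-sup condition of Theorem~\ref{thm:inf-sup_A}, rewrite $A_h(\chi_h;\cdot)$ using the weak Galerkin orthogonality as $-B_h(\eta;\cdot)+G_h(\bar w;\cdot)$, and bound the two pieces by the continuity estimates of Lemma~\ref{lemma:bilinear_bounds} and the weak consistency~\textbf{A2}. The one unnecessary complication is your choice of $\pi_h^c$ for the displacement: the paper simply uses the standard $\pi_h$ throughout, and the modified interpolant $\pi_h^c$ is only established at first order in \eqref{interpolation_stab2}, so to justify ``the interpolation estimates collected earlier already give $\|\eta\|_*\lesssim \mu^{\onehalf}h^{\bar k}|\bu|_{\bar k+1}+\ldots$'' and the corresponding bound on $|\pi_h^c\bu|_{g_{1,h}}$ you would first have to upgrade $\pi_h^c$ to order $\bar k$ (which is possible from its construction but not stated); using $\pi_h$ for $\bu$ avoids this detour entirely.
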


\begin{proof}
The proof follows the usual recipe.
We begin by decomposing the error into an interpolation error and a discrete error,
$$
(\bu - \bu_h,p_T - p_{T,h},p_{F} - p_{F,h}) = 
(\bu - \pi_{h} \bu ,p_T - \pi_{h} p_{T},p_{F} - \pi_{h} p_{F}) - 
( \bu_h - \pi_{h}\bu , p_{T,h}-\pi_{h} p_T ,p_{F,h}- \pi_{h} p_{F} ) =: E_{\pi} - E_h,
$$
resulting in 
\begin{align*}
 \| (\bu - \bu_h,p_T - p_{T,h},p_{F} - p_{F,h}) \|_{*}
    &\lesssim \|  E_{\pi} \|_{*} + \tn E_h  \tn_h.
  \end{align*}
  Starting with the $E_h$ term, 
  we take \( (\bv_h,q_{T,h},q_{F,h}) = W_h \) which satisfy the inf-sup estimate \eqref{eq:inf-sup-big-A} for \(A_h\) and scales such that 
  % \( \tn (\bv_h,q_{T,h},q_{F,h} )\tn = 1  \), 
  \( \tn W_h \tn_h = 1  \), 
  and utilize the weak Galerkin orthogonality \eqref{eq:weak_galerkin}
  to bound the discrete error by the interpolation error and the
  consistency error of the ghost penalty,
  \begin{align*} 
    \tn E_h  \tn_h 
    \lesssim A_{h}(E_h, W_h )
    = &B_{h}( E_{\pi}, W_h) - G_h( \pi_h U, W_h),
    %\lesssim  
    %\tn E_{\pi} \tn_{*} + |E_{\pi}|_{G_h}.
  \end{align*}
  where we use  \( \pi_h U = (\pi_h \bu, \pi_h p_T, \pi_h p_F) \).
  Assumption \textbf{A2} regarding weak consistency for the ghost penalties and the chosen scaling yields
  \begin{align*}
    |\pi_h U|_{G_h} \lesssim \mu^{\onehalf} h^{\bar{k}} |\bu|_{\bar{k}+1,\Omega} +  \mu^{-\onehalf} h^{\bar{k}}  |p_T|_{\bar{k},\Omega}+ \left( K^{\onehalf} h^{\bar{l}} + \lambda^{-\onehalf} h^{\bar{l}+1} \right) |p_F|_{\bar{l}+1,\Omega}.
\end{align*}
The bilinear form \(B_h\) can be bounded using the results in Lemma~\ref{lemma:bilinear_bounds}, the definitions of the enhanced norms \eqref{eq:normstar_1}--\eqref{eq:normstar_3} and \eqref{defn-norm-Vh}--\eqref{defn-norm-Qf}, and the inequality $\normtast{q_{T,h}} \lesssim \tnormt{q_{T,h}}$, which follows from the trace inequality~\eqref{eq:trace-ineq-2} and Lemma~\ref{lemma:l2-extension-pt}. We obtain
\begin{align*}
   B_{h}( E_h, W_h) 
  &\lesssim \normvast{\bu - \pi_{h}\bu } \tnormv{ \bv_h} 
  + \normv{\bv_h} \normtast{p_{T} - \pi_{h} p_T} 
 +  \normv{\bu - \pi_{h}\bu}\normtast{q_{T,h}} \\
  &\quad + \lambda^{-1} \mu \|p_{T} - \pi_{h}p_T\|_\Omega \tnormt{q_{T,h}} 
  + \lambda^{-1} \norm{ p_{F} - \pi_{h} p_F }_{\Omega}\norm{q_{T,h}}_\Omega\\
  &\quad+ \lambda^{-1}  \norm{ p_{T} - \pi_{h} p_T}_{\Omega} \norm{q_{F,h}}_\Omega  + \normfast{ p_{F} - \pi_{h} p_F } \tnormf{q_{F,h}} \\
  &\lesssim  \normvast{\bu - \pi_{h}\bu } \tnormv{ \bv_h}
  + \tnormv{\bv_h} \normtast{p_{T} - \pi_{h} p_T} 
  +  \normvast{\bu - \pi_{h}\bu}\tnormt{q_{T,h}} \nonumber \\
  &\quad + \lambda^{-1} \mu \|p_{T} - \pi_{h}p_T\|_\Omega \tnormt{q_{T,h}} 
   + \lambda^{-1}\mu^{\onehalf} \norm{ p_{F} - \pi_{h} p_F }_{\Omega}\tnormt{q_{T,h}} \nonumber \\
  &\quad+ \lambda^{-\onehalf}  \norm{ p_{T} - \pi_{h} p_T}_{\Omega} \tnormf{q_{F,h}} 
   + \normfast{ p_{F} - \pi_{h} p_F } \tnormf{q_{F,h}} \nonumber\\
  &\lesssim   \mu^{\onehalf} h^{\bar{k}}  |\bu|_{\bar{k}+1,\Omega}  + \left( \mu^{-\onehalf} + \lambda^{-\onehalf} + \lambda^{-1} \mu^{\onehalf}  \right) h^{\bar{k}}  |p_T|_{\bar{k},\Omega} 
 + \left( K^{\onehalf} h^{\bar{l}} + \lambda^{-\onehalf} h^{\bar{l}+1}+  \lambda^{-1}\mu^{\onehalf} h^{\bar{l}+1} \right)  |p_F|_{\bar{l}+1,\Omega},
\end{align*}
where the last line follows from the interpolation estimates \eqref{eq:energy_interpolation}. The assertion of the theorem follows by combining the above bounds.
\end{proof}

\section{Numerical results }
\label{sec:numerics}
In this section, we present numerical results to corroborate the theoretical findings. First, we present a convergence study in 2D, where we vary the parameters \( \lambda \) and \( K \) to show that the method is robust in these parameters. Next, we test the geometrical robustness of the method by varying the cut configurations. Afterwards, we present a 3D convergence study, and finally we present an example for solving Biot's equations on a brain geometry. In all experiments we use Taylor-Hood elements for the displacement and total pressure and second order elements for the fluid pressure, i.e. \( k = 2, l = 2 \). As ghost-penalty realizations we use the facet-based stabilizations \eqref{eq:gc_face_based}, with stabilization parameters \(\gamma_1 = 0.1, \gamma_2 = 0.01\), and Nitsche parameters \(\gamma_u = \gamma_p = 40.0 \).
The implementation is done in the Julia based open-source library Gridap~\cite{Badia2020,Verdugo2022}, with plugins GridapEmbedded, GridapPETSc, and STLCutters~\cite{badiaGeometrical2022}, and all experiments are run with Julia version 1.11.2. 
\begin{figure}
  \centering
  %\includegraphics[trim = {10cm 10cm 10cm 10cm},clip=true ,width=0.32\textwidth]{figures/mesh.png}
  %\hfill
  \includegraphics[trim = {6cm 25cm 7cm 13cm},clip=true, width=0.35\textwidth]{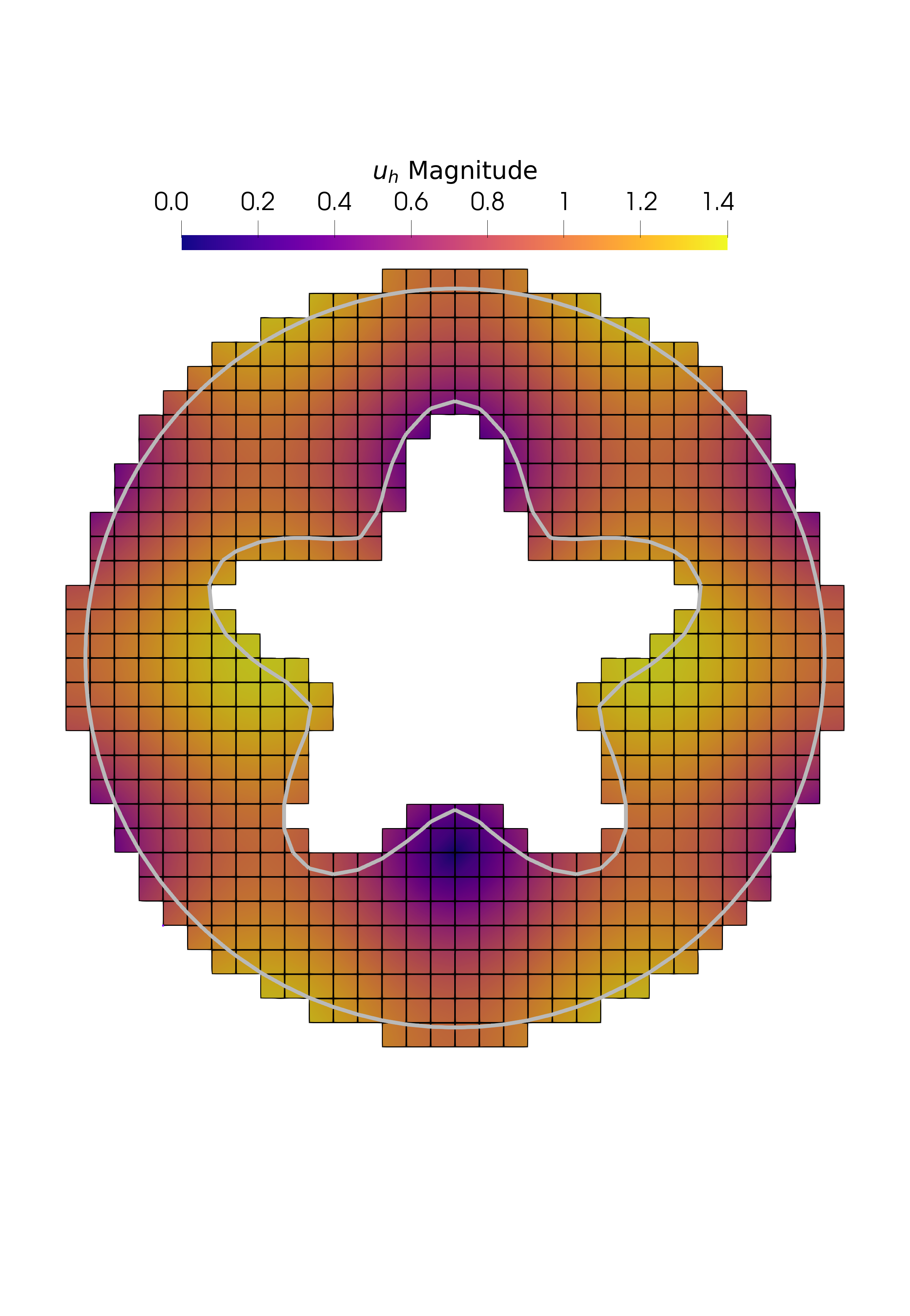}
  %\hfill
  \includegraphics[trim = {6cm 25cm 7cm 13cm} ,clip=true,width=0.35\textwidth]{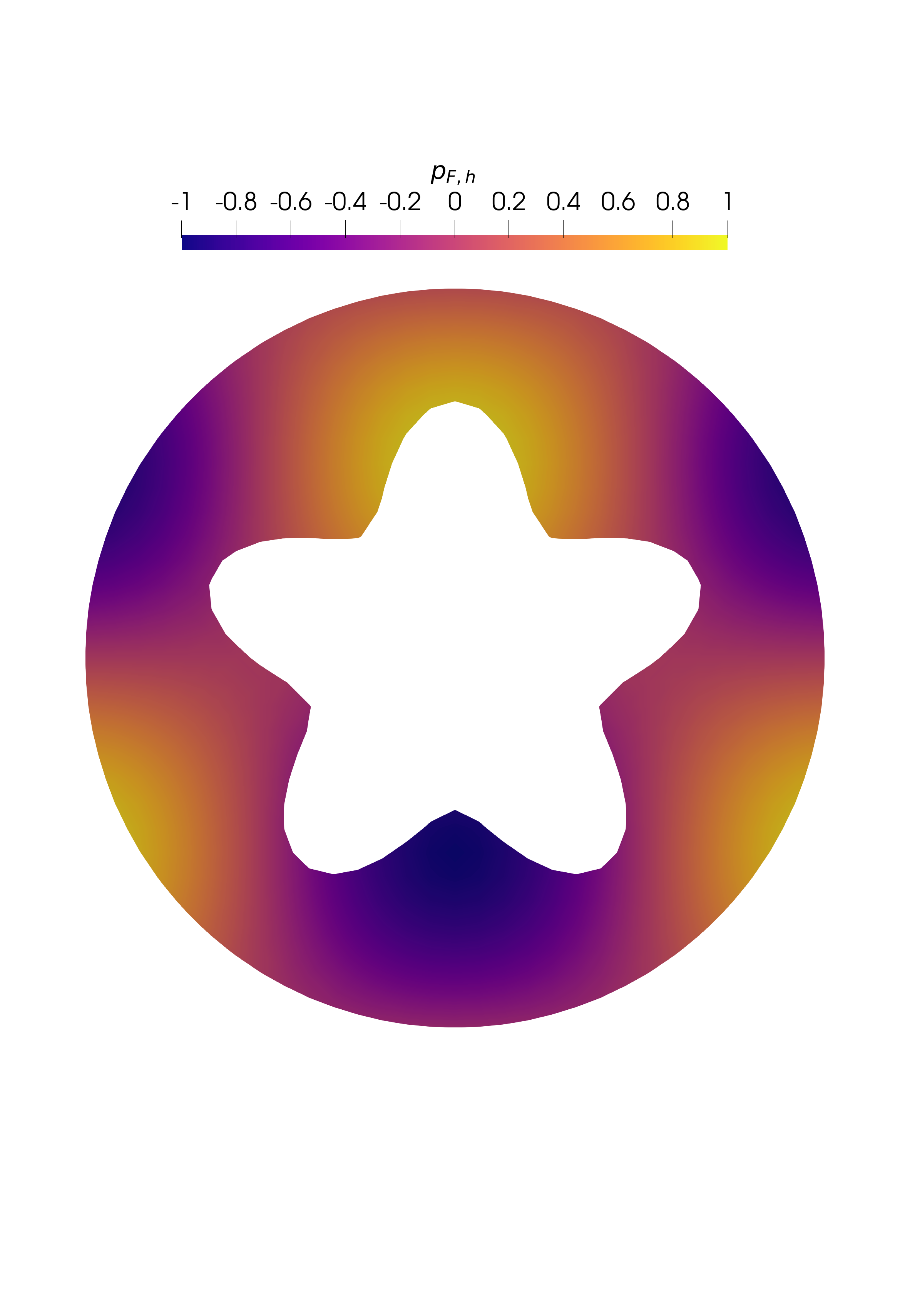}
  %\hfill
  \caption{Convergence analysis 2D. The displacement solution \(\bu_h\) (magnitude) on the full computational domain and the fluid pressure solution \(p_{F,h}\) on the physical domain for \(N=32\) and \(\lambda =1\) and  \(K=1\).}
  \label{fig:solution}
\end{figure}

\subsection{Convergence analysis 2D}

We begin with a convergence analysis in 2D and define the background domain \( \widetilde{\Omega} = [-1,1]^2 \), the  circle 
\begin{align*} 
  \widetilde{\Omega}_1 = \{ (x,y) \in \mathbb{R}^2: x^2 + y^2 < 0.95^2 \},
\end{align*}
and a flower-like domain
\begin{align*}
  \widetilde{\Omega}_2 &= \{ (x,y) \in \mathbb{R}^2 : \phi(x,y) < 0 \}, \quad 
  \phi(x,y) =\sqrt{x^2 + y^2} -r_0 - r_1 \cos(5.0 \text{atan}_2 (y, x)),
\end{align*}
with \(  r_0 = 0.7, r_1= 0.18 \). The computational domain is defined as 
\begin{align*}
  \Omega = \widetilde{\Omega}_1 \setminus  \widetilde{\Omega}_2,  \quad \Gamma_d = \partial \widetilde{\Omega}_1 , \quad \Gamma_s = \partial \widetilde{\Omega}_2,
\end{align*}
i.e. a circle with a flower-shaped hole; see Figure~\ref{fig:solution} for an illustration.

\begin{figure}[p]
  \centering
  \begin{tabular}{cc}
      \begin{tabular}{c}
          \includegraphics[width=0.43\textwidth]{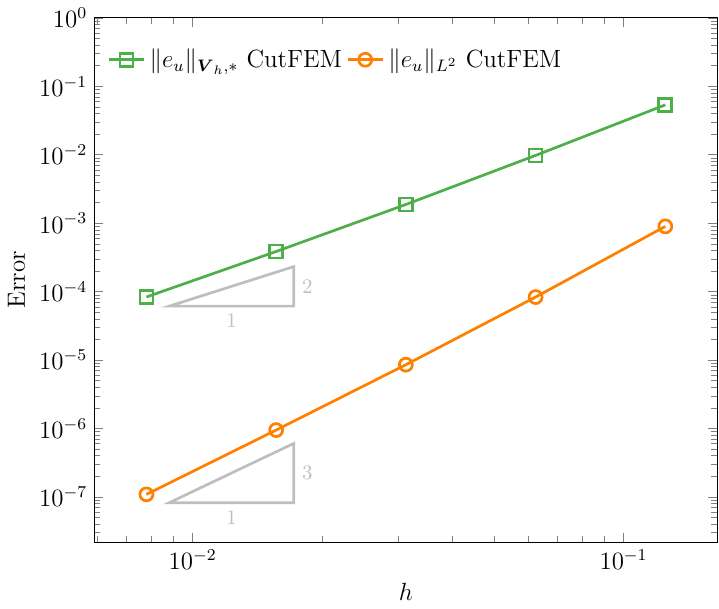} \\
          \includegraphics[width=0.43\textwidth]{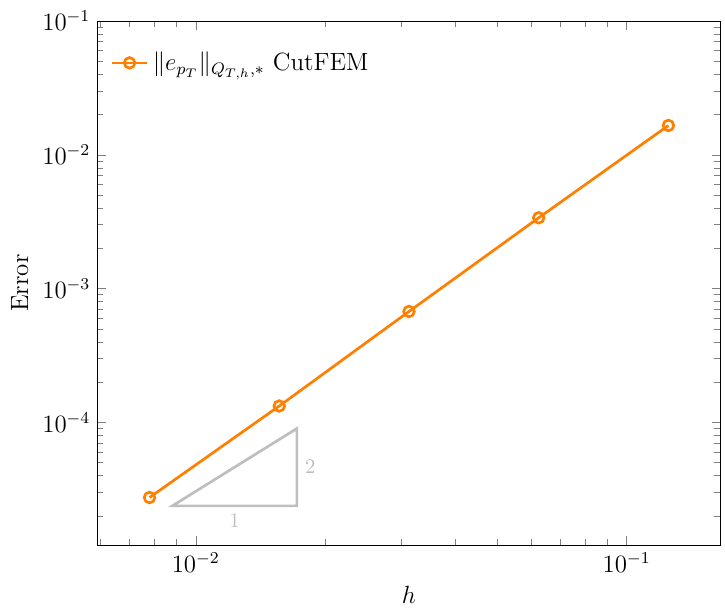} \\
          \includegraphics[width=0.43\textwidth]{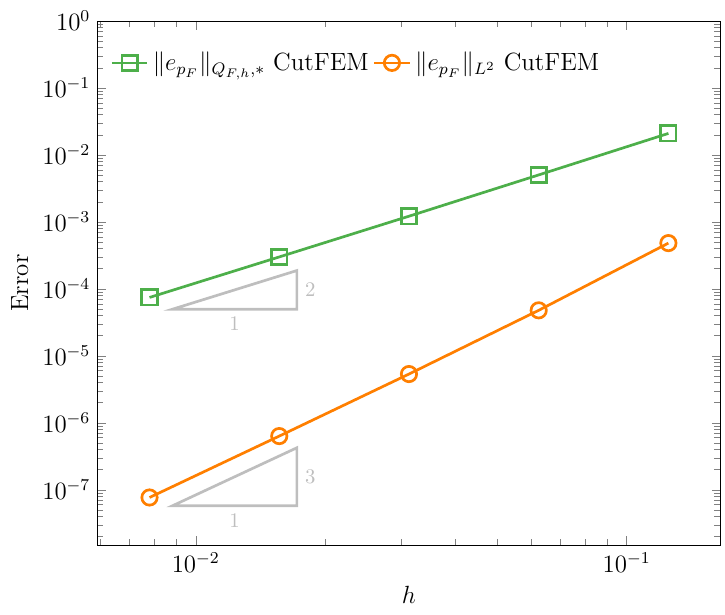} \\
      \end{tabular} &
      \begin{tabular}{c}
          \includegraphics[width=0.43\textwidth,trim=4 0 0 0,clip]{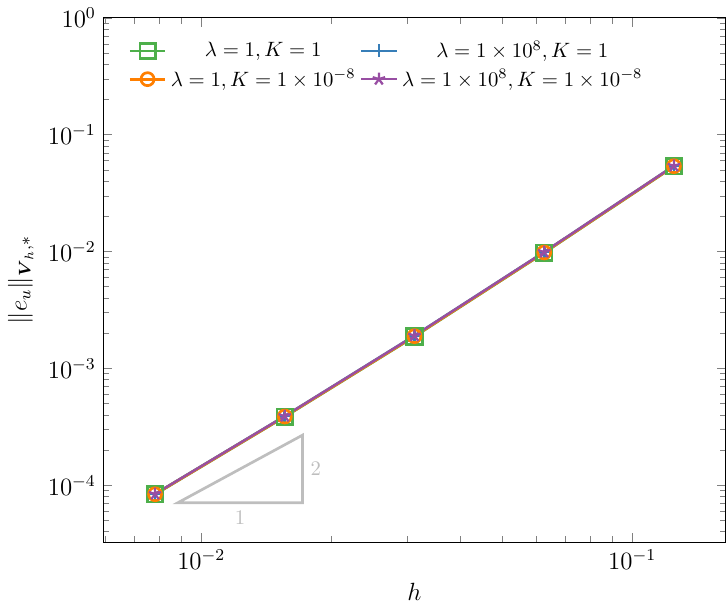} \\
          \includegraphics[width=0.43\textwidth,trim=4 0 0 0,clip]{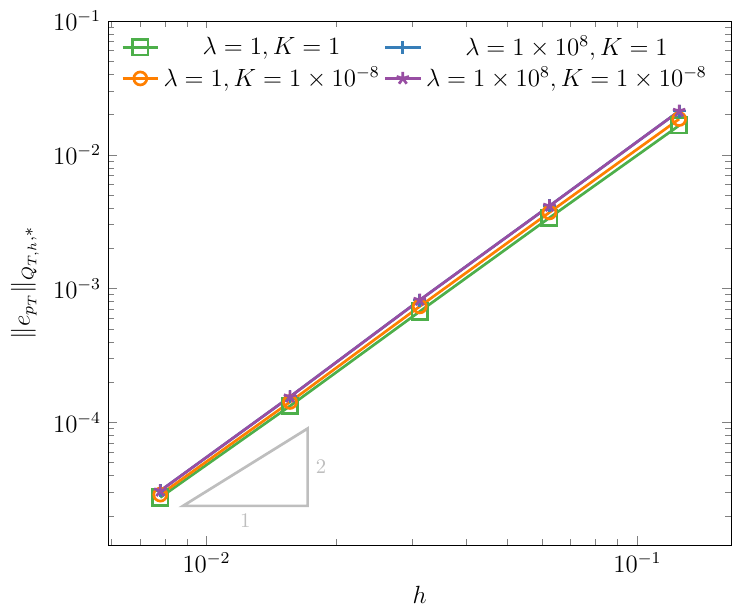} \\
          \includegraphics[width=0.43\textwidth,trim=4 0 0 0,clip]{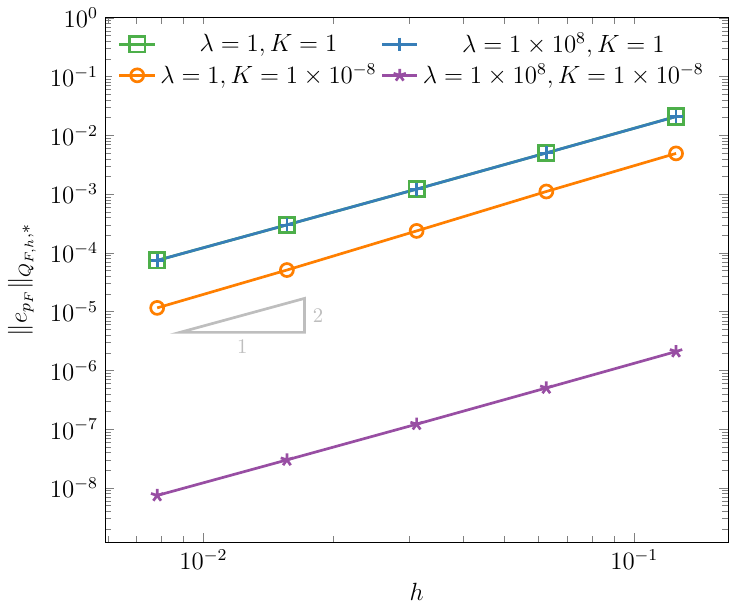} \\
      \end{tabular}
  \end{tabular}
  \caption{Convergence analysis 2D. Left: Errors measured in both the discrete norms \eqref{eq:normstar_1}-\eqref{eq:normstar_3} and \(L^2\)-norm for \(\lambda =1\) and  \(K=1\). Right: Errors measured in the discrete norms for different values of \(\lambda\) and \(K\).}
  \label{fig:combined-convergence}
\end{figure}

The problem is solved on a uniform mesh consisting of \(N^2\) uniform square elements, where the element size \(h_n\) is gradually decreased by refining the mesh, setting \(N = 2^{2+n}\), with \(n = 2, . . . , 6.\) For each refinement, the corresponding error \(E_n\) for the three unknowns is calculated in related norms,  
and the experimental order of convergence (EOC) is calculated via
\begin{align*}
  \text{EOC} = \frac{\log(E_{n-1}/E_n)}{\log(h_{n-1}/h_n)}.
\end{align*}
We use the following exact solutions:
\begin{align*}
\bu_{\text{ex}}(x,y) &= 
\begin{bmatrix}
  \cos(\pi y) \\
\sin(\pi x)
\end{bmatrix},\\
p_{F,\text{ex}}(x,y) &= \sin(\pi x)\sin(\pi y),\\
p_{T,\text{ex}}(x,y) &= p_{F,\text{ex}}(x,y) - \lambda \nabla \cdot \bu_{\text{ex}}(x,y),
\end{align*}
and define the terms on the right hand side, \(f\) and \(g\), correspondingly. We vary the parameters \( \lambda = \{1,10^8\}, K = \{1,10^{-8}\} \), for each refinement series, while letting \(\mu = 1\). One solution is depicted in Figure~\ref{fig:solution}. The convergence results are shown in Figure~\ref{fig:combined-convergence}. We observe optimal convergence rates, also when \(\lambda\) is large, and \(K\) is small. Note that the discrete norm for \(p_{F,h}\) \eqref{eq:normstar_3} is scaled by \(K\) and \(1/\lambda\).

\subsection{Geometrical robustness}
Next, we test the robustness of the method with respect to different cut configurations. We repeat the setup from the convergence analysis, and fix \(N=60\) and set \(\lambda = K = \mu = 1\). By slightly moving \(\widetilde{\Omega}\), we generate a family of translated background domains \(\{\widetilde{\Omega}_{\delta_k}\}_{k=1}^{2000}\) such that \(\widetilde{\Omega}_{\delta_k} = [-1+\delta_k h,1+\delta_k h ]^2 \) with \(\delta_k = k\cdot 5\cdot 10^{-4}\). We calculate the errors for each different cut configuration, both with and without stabilization terms, and report them in Figure~\ref{fig:robustness}. We see that the for the non-stabilized formulations, the error in the discrete norms \eqref{eq:normstar_1}-\eqref{eq:normstar_3} are very sensitive to the cut configuration, while the stabilized formulation is robust with respect to the cut configuration. For the error in the \(L^2\)-norm, there is also some sensitivity with the non-stabilized formulation.

\begin{figure}[t]
  \centering
  \includegraphics[ width=0.325\textwidth]{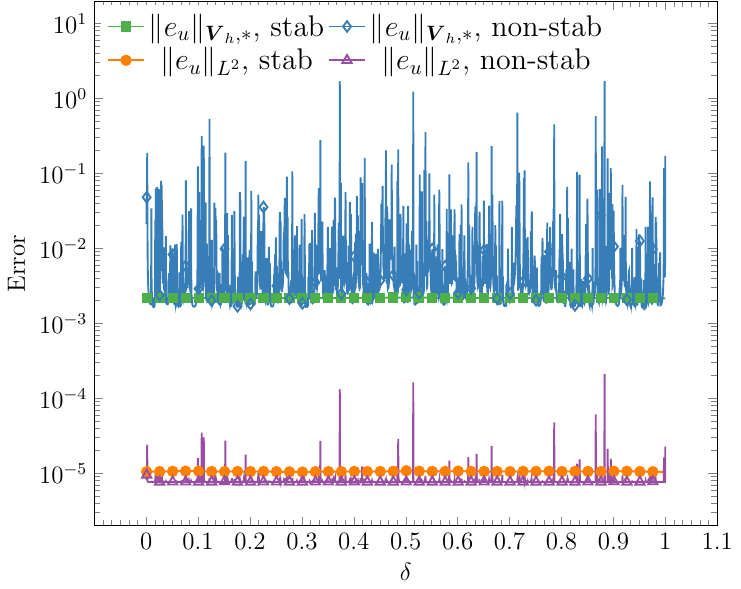}
  \includegraphics[ width=0.325\textwidth]{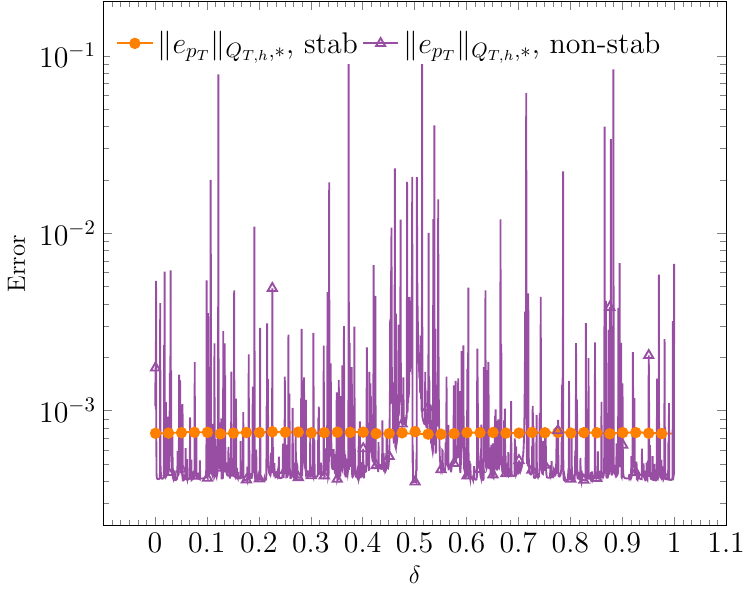}
  \includegraphics[ width=0.325\textwidth]{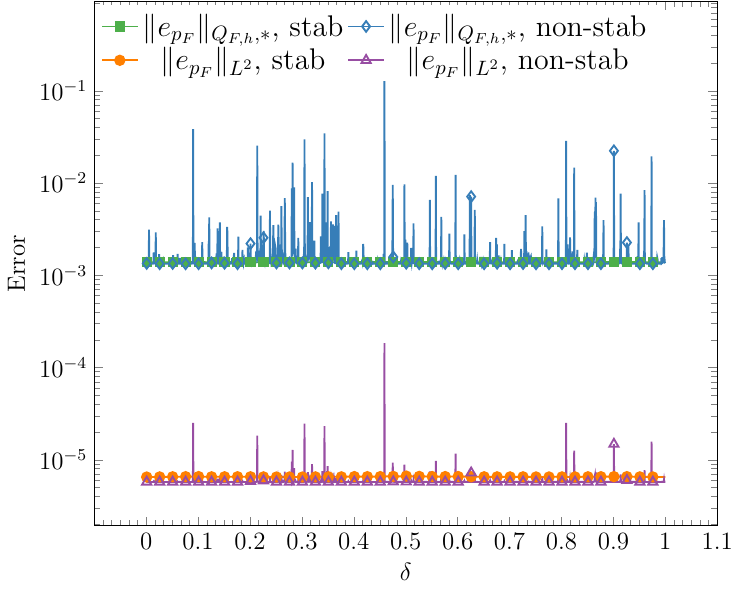}
  \caption{Geometrical robustness. Errors in both the discrete norms \eqref{eq:normstar_1}-\eqref{eq:normstar_3} and the \(L^2\)-norm for different cut configurations with and without stabilization.}
  \label{fig:robustness}
\end{figure}

\subsection{Convergence analysis 3D}
We next test the method in 3D, this time representing the physical domain by an STL mesh. We define the background domain as \(\widetilde{\Omega} = [-1.3,1.3]^3\). The STL mesh consists of a popcorn-like domain with a spherical hole in it, see Figure \ref{fig:solutions_3D}. The exact solutions is defined by:
\begin{align*}
  \bu_{\text{ex}}(x,y,z) &= 
\begin{bmatrix}
  \sin(\pi x) \cos(\pi y) \\
  \sin(\pi z) \\
  \cos(\pi x) \sin(\pi y) \sin(\pi z)
\end{bmatrix},\\ 
p_{F,\text{ex}}(x,y,z) &= \cos(\pi x)\sin(\pi x)\sin(\pi y),\\
p_{T,\text{ex}}(x,y,z) &= p_{F,\text{ex}}(x,y,z) - \lambda \nabla \cdot \bu_{\text{ex}}(x,y,z),
\end{align*}
We let \(\lambda = 5\), \(\mu=1\) and \(K=0.1\), and use a uniform mesh with \(N^3\) elements, where the element size \(h_n\) is gradually decreased by refining the mesh, setting \(N = [12,24,48]\). The convergence results are seen in Table~\ref{table:convergence3D}, and illustrations of the results in Figure~\ref{fig:solutions_3D}. We observe higher convergence than optimal convergence rates, which we conjecture is due to the geometrical resolution. 
 
\begin{table}
    \begin{center}
        \caption{Convergence analysis 3D. Errors and experimental orders of convergence (EOC).}
        \label{table:convergence3D}
        \begin{tabular}{
            l
            *{1}{S[table-format=1.2e-2]} *{1}{S[table-format=-1.2]}
            *{1}{S[table-format=1.2e-2]} *{1}{S[table-format=-1.2]}
            *{1}{S[table-format=1.2e-2]} *{1}{S[table-format=-1.2]}
        }
            \toprule
            {$N$}
            & {$\normvast{u-u_h} $}  & {EOC}
            & {$\norm{u-u_h}_{L^2} $} & {EOC}
            & {$\normtast{p_T-p_{T,h}} $} & {EOC} \\
            \midrule
            12 & 1.57e+00 & {--} & 1.62e-01 & {--} & 1.15e+00 & {--} \\
            24 & 1.89e-01 & 3.05 & 8.56e-03 & 4.25 & 2.30e-01 & 2.32 \\
            48 & 4.21e-02 & 2.17 & 6.85e-04 & 3.64 & 4.89e-02 & 2.23 \\
            \midrule
            {$N$}
            & {$\normfast{p_F-p_{F,h}}$} & {EOC}
            & {$\norm{p_F-p_{F,h}}_{L^2}$} & {EOC} \\
            \midrule
            12 & 6.03e-02 & {--} & 2.41e-02 & {--} \\
            24 & 7.38e-03 & 3.03 & 1.13e-03 & 4.42 \\
            48 & 1.61e-03 & 2.19 & 6.11e-05 & 4.21 \\
            \bottomrule
        \end{tabular}
    \end{center}
\end{table}

\begin{figure}[t]
  \centering
  \includegraphics[trim = {6cm 28cm 7cm 13cm},clip=true ,width=0.32\textwidth]{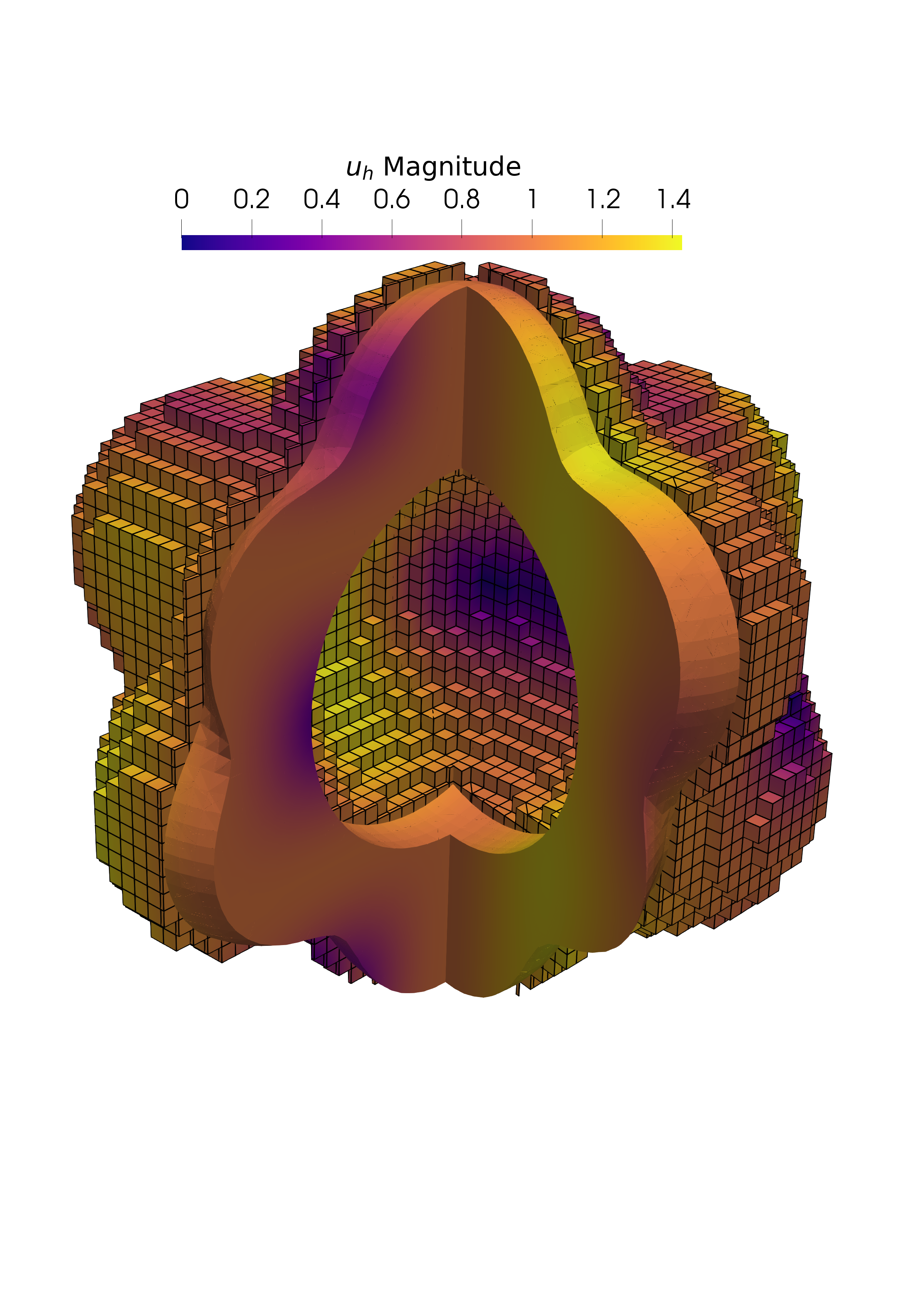}
  \includegraphics[trim = {6cm 28cm 7cm 13cm},clip=true, width=0.32\textwidth]{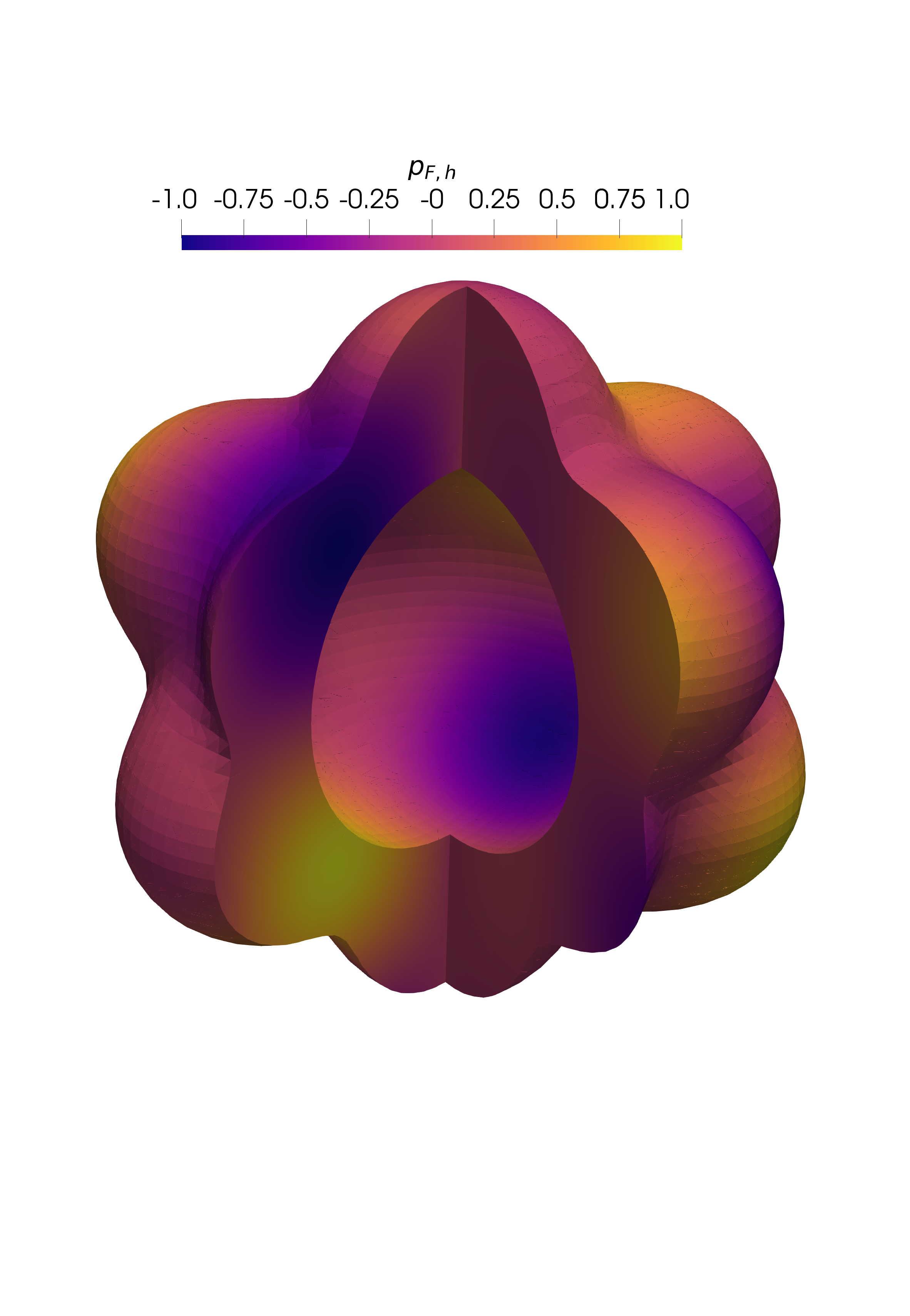}
  \includegraphics[trim = {6cm 28cm 7cm 13cm} ,clip=true,width=0.32\textwidth]{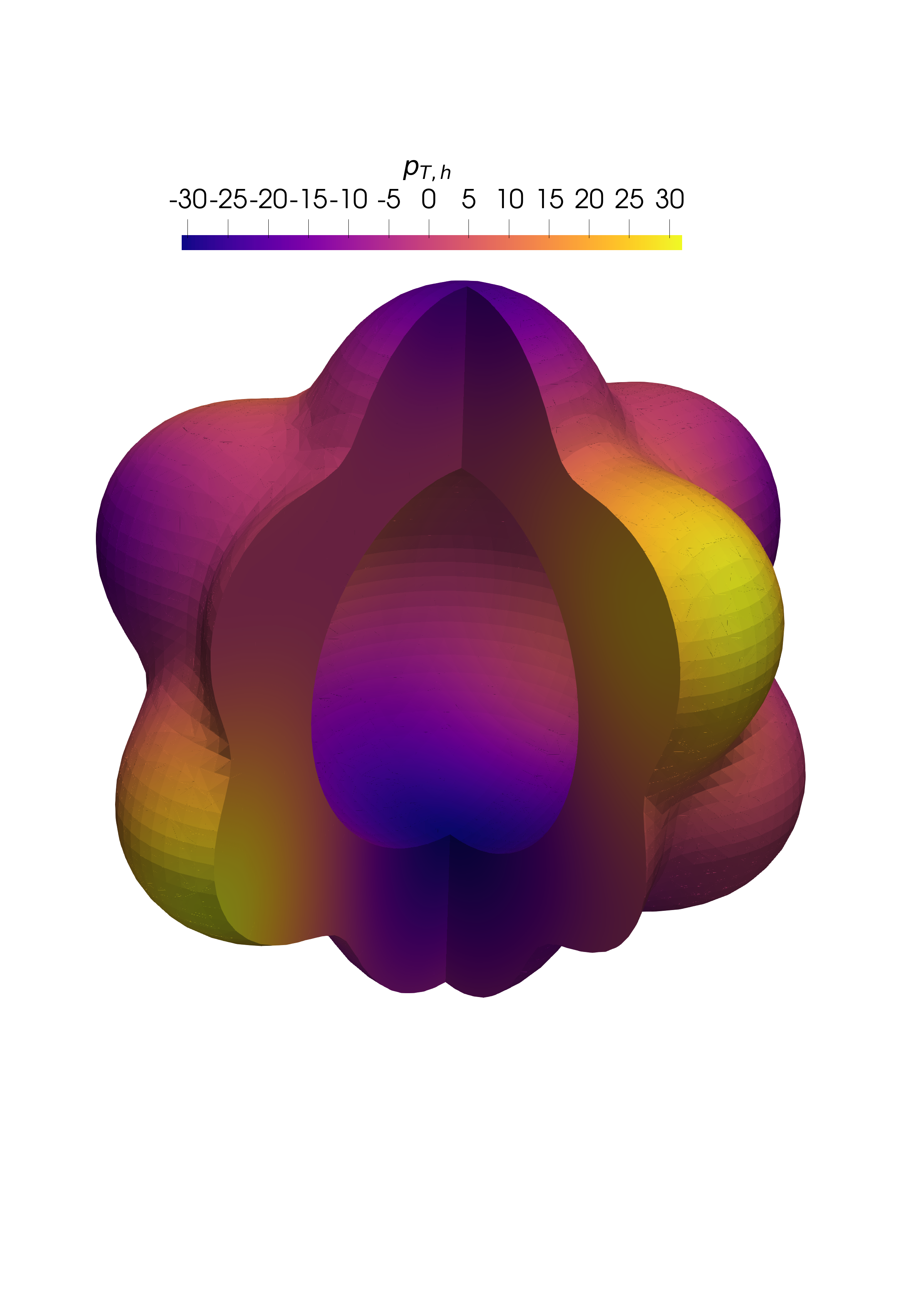}
  \caption{Convergence analysis 3D. The computational domain and the displacement solution \(\bu_h\) (magnitude), the fluid pressure solution \(p_{F,h}\), and  the  total pressure solution \(p_{T,h}\) for \(N=48\), here with a section removed for illustration.}
  \label{fig:solutions_3D}
\end{figure}

\subsection{Poroelastic simulation of brain mechanics}

In the final example we consider a poroelastic model of the brain for 
the interstitial fluid flow and tissue deformation in a setting with mixed boundary conditions.  Interstitial fluid flow caused by poroelastic deformations is important in the context of the glymphatic system~\cite{iliff2012paravascular}, see ~\cite{bohr2022glymphatic} for a recent review. In particular, an issue that has been debated is whether there is any static pressure driven advection acting on long time scales \cite{holter2017interstitial}.  In the present context, we therefore put focus on static forces represented by mixed boundary conditions. Here, we consider a setting with the brain mostly floating in cerebrospinal fluid (CSF) while parts are attached to dura matter, which is hard matter surrounding and protecting the brain. Specifically, it is known that while the intracranial pressure of the CSF pulsates with cardiac pulsation as well as the respiration with a magnitude of 100--1000 Pa, the pulsation is largely synchronous throughout the whole brain~\cite{vinje2019respiratory}. 
Therefore we ignore the pulsation and instead consider  
a small spatial gradient of ~1 Pa/cm~\cite{daversin2020mechanisms, stoverud2016poro}. Furthermore, 
as highlighted in studies of traumatic head injuries,  dura mater attachment prevents local motion and henceforth we consider dura attachment along the tentorium, see Figure~\ref{fig:brainsolutions_3D}, using 
a zero boundary condition for the displacement along with null flux. Elsewhere we assign a vertical pressure gradient, see e.g. ~\cite{croci2019uncertainty, rajna20193d} for more detailed pressure characterizations. Given such a setup, it is interesting to quantify the flow, pressure and displacement. Specifically, we solve system \eqref{eq:biot-total-press-I}--\eqref{eq:biot-total-press-III} on a right brain hemisphere represented by surface STL mesh available at \cite{martin_hornkjol_2024_10808334}. In the background mesh we set a mesh size of \(h=\SI{2.8125}{\milli\metre}\), giving the resulting system 1 million degrees of freedom. As boundary conditions we set 
\begin{align*}
\bu &= 0  & &\quad \mbox{ on tentorium} , 
\\
K \frac{\partial p_F}{\partial n} &= 0 & & \quad \mbox{ on tentorium} ,
\\
(\mu \varepsilon (\bu) - p_T \bI) \cdot \bn &= -p_{F,D}\bn  & & \quad \mbox{ on cerebrum} , 
\\
p_F & = p_{F,D}(x,y,z) = 1.33 - (z_0 - z)  \num {1.33e-3}  & &\quad \mbox{ on cerebrum},
\end{align*}
where \(z_0\) is the minimum z-coordinate of the domain. The parameters are set to \(E = \SI{1600}{\pascal}\), \(K = \SI{2.0e-6}{\milli\metre\squared\per\pascal}\), 
see \cite{stoverud2016poro}, the Lam\'e parameters are calculated as 
\begin{align*}
\mu = \frac{E}{2(1+\nu)}, \quad \lambda =  \frac{E \nu}{(1+\nu)(1-2\nu)},
\end{align*}
with \(\nu = \num{0.499}\),
and all source terms are set to zero. 
For the Nitsche parameter for the fluid pressure we have chosen the value higher than usual (\(\gamma_p = \num{4.0e6} \)) to better enforce the boundary condition on this relatively coarse mesh, as the dynamics are mainly driven by the gradient of the boundary pressure. The simulation results are presented in Figure~\ref{fig:brainsolutions_3D}. The displacement is of the order 0.1 mm, which is similar to that reported in the literature and measured through magnetic resonance imaging~\cite{goa2025brain}. The displacement field is however much more complex in real life, it varies a lot between individuals, and it is affected by multiple causes, such as cardiac, respiratory, sleep, etc~\cite{ulv2025sleep}. Concerning the interstitial fluid velocities, several studies have attempted to assess them in particular because of their importance for the glymphatic system. For example, \cite{holter2017interstitial} estimated pore velocities of up to 100 nm/s computationally, whereas \cite{ray2019analysis} estimated a superficial velocity of $\mu$m/s. Furthermore, in a parameter estimating study \cite{vinje2023human}, superficial velocities in the order of 2-4 $\mu$/min were found to provide best fit for overnight MRI-tracer movement. Our estimates here are at the lower end of the spectrum, with 
maximal velocities in the order of nm/s. We do remark that the static pressure gradient employed in our setting is related to the intracranial pressure pulsation occurring due to cardiac pulsation or respiration.  

\begin{figure}[t]
  \centering
  \includegraphics[width=0.45\textwidth]{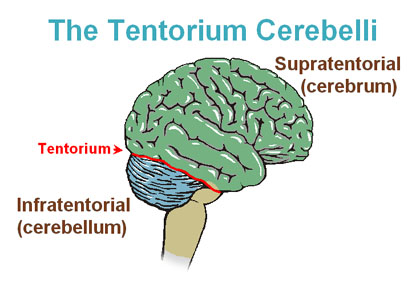}
  \includegraphics[ width=0.45\textwidth]{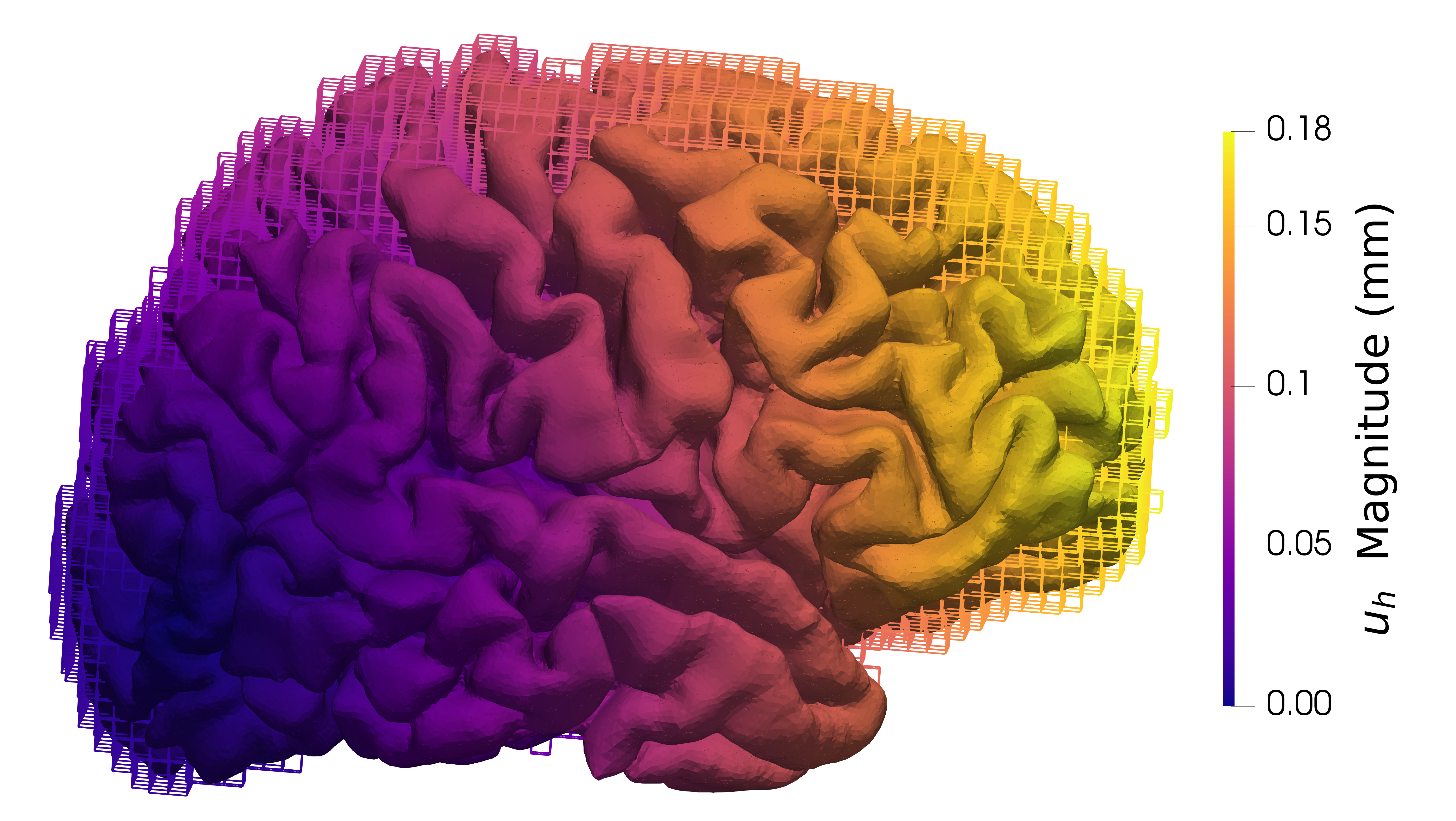}
\includegraphics[width=0.45\textwidth]{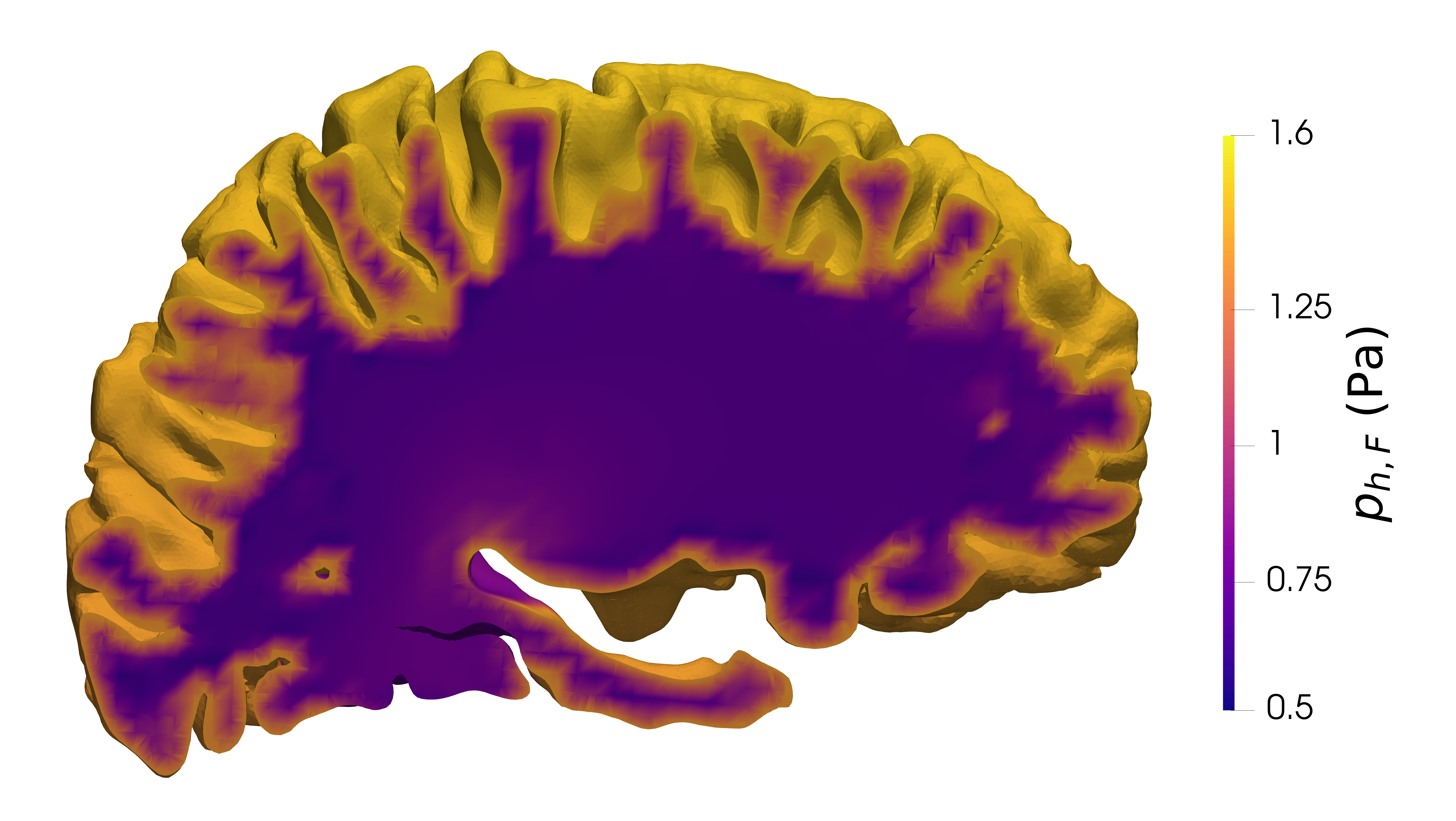}
\includegraphics[width=0.45\textwidth]{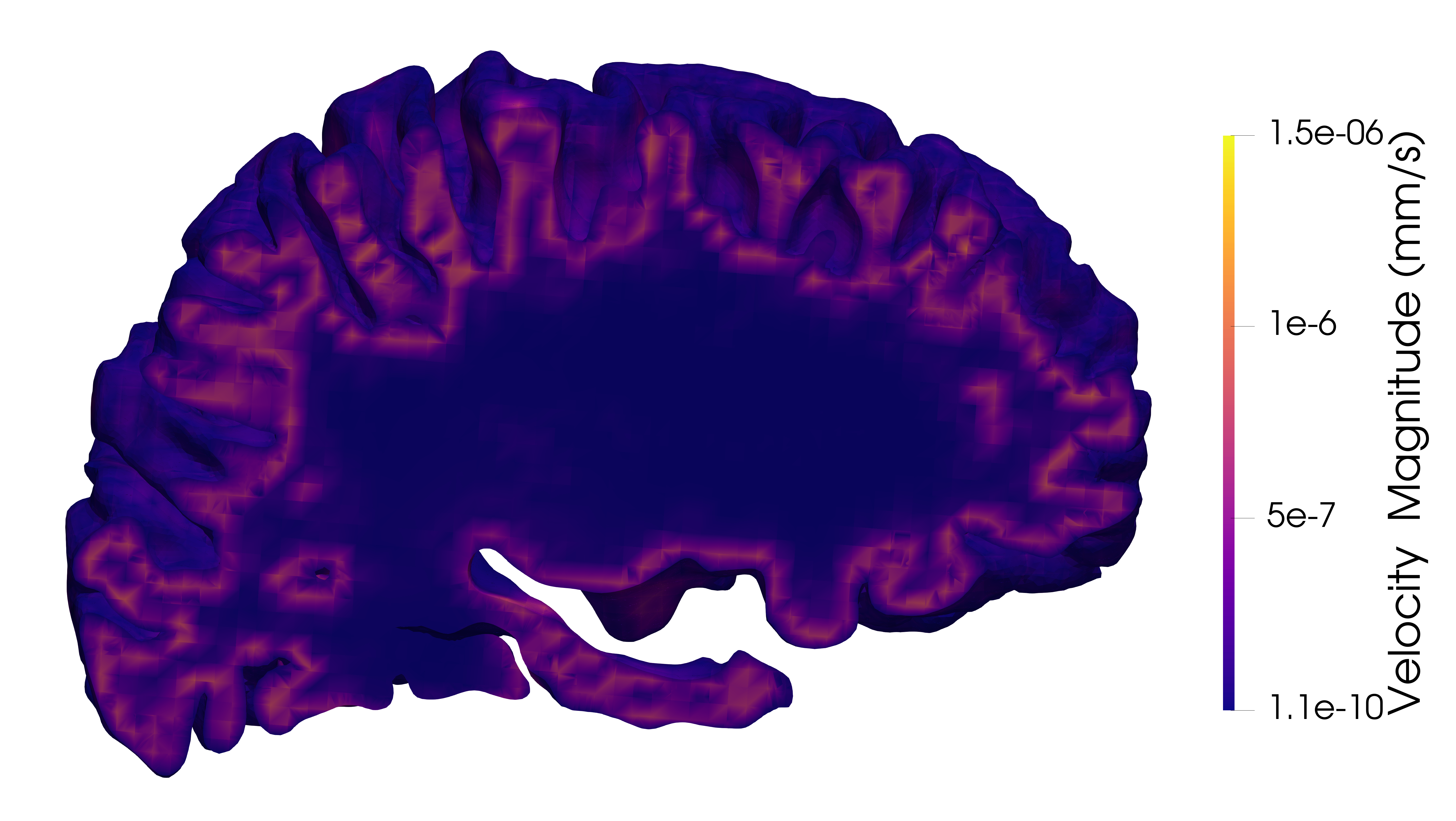}
  \caption{An illustration \protect\footnotemark (left, top) of the cerebellum, cerebrum and the tentorium, which separates these different parts of the central nervous system. The tentorium is a part of the dura matter and we simulate the case of attachment to the tentorium via zero displacement boundary condition in that region. Otherwise the brain is subjected to a stress field in the z-direction. The corresponding displacement (right, top)  is shown to be of a magnitude of almost 0.1 mm. The pressure gradients are localized near the boundary (left, bottom) as are the highest fluid velocities (right, bottom).
The displacement field is shown together with the (clipped) background mesh.}
  \label{fig:brainsolutions_3D}
\end{figure}
\footnotetext{\url{https://en.wikipedia.org/wiki/Cerebellar_tentorium}, license: public domain}

% \protect\footnotemark

\section{Conclusion and outlook}
\label{sec:conclusion}
We have presented a stabilized cut finite element method for the Biot system based on the total-pressure formulation. Our theoretical analysis established stability and optimal convergence rates, independent of physical parameters and cut configurations, and these properties were confirmed by extensive numerical experiments in two and three dimensions. The CutFEM approach enables robust simulation of poroelasticity in complex domains, such as brain mechanics, without the need for body-fitted meshes.

Efficient preconditioners are essential for solving the Biot system at large scale, and future work should include the development and analysis of preconditioners for the proposed CutFEM formulation, e.g., based on~\cite{Lee-Mardal-Winther}. Additional directions include octree-based parallel adaptive mesh refinement~\cite{burstedde2011P4estScalableAlgorithms},
for which Gridap already provides an extension module. This would allow for local refinement along the embedded boundary and better resolution of boundary dynamics. 
Moreover, the geometric and discretization algorithms need to be extended to account for multiple interfaces and heterogeneous brain regions in more realistic multiphysics brain models with spatially varying material parameters. 
Other interesting multiphysics settings that could be considered are Biot-Stokes coupling~\cite{Stokes-Biot-eye,boon2022ParameterrobustMethodsBiotb} to include couplings with the surrounding CSF flow, or multiple-network poroelasticity (MPET) equations~\cite{lee2019MixedFiniteElement}.
A main advantage of the CutFEM approach in the context of biomedical applications is that meshing~\cite{mardal2022mathematical, dokken2025mathematical} is avoided, while still sub-voxel resolution of surfaces are retained through imaging segmentation tools such as for example FreeSurfer. 
\footnotetext{\url{https://en.wikipedia.org/wiki/Cerebellar_tentorium}, license: public domain}

\section*{Acknowledgments}
NB acknowledges support from the Swedish Research Council under grant no. 2021-06594 while in residence at Institut Mittag-Leffler in Djursholm, Sweden during the fall semester of 2025. 
KAM acknowledges funding by: Stiftelsen Kristian Gerhard Jebsen via the K.
G. Jebsen Centre for Brain Fluid Research; the national infrastructure for computational science in Norway Sigma2 via grant NN9279K; the Center of Advanced Study
at the Norwegian Academy of Science and Letters under the program Mathematical
Challenges in Brain Mechanics;  research stay funds at ICERM, Brown under the program
Numerical PDEs: Analysis, Algorithms, and Data Challenges; the Computational Hydrology project,  a strategic Sustainability initiative at the Faculty of 
Natural Sciences, UiO;  
and the grant 101141807 (aCleanBrain) by the European Research Council. 
AM acknowledges travel funds from: 
ICERM, Brown under the program "Numerical PDEs: Analysis, Algorithms, and Data Challenges";
Norwegian Academy of Science and Letters under the program "Mathematical Challenges in Brain Mechanics";
Institut Mittag-Leffler in Djursholm, Sweden, under the 
research program "Interfaces and unfitted discretization methods (grant no. 2021-06594 from the Swedish Research Council). IY is partially supported by the United States National Science Foundation grant DMS-2410686, the Alexander von Humboldt Foundation via the Humboldt Research Award, and the Center of Advanced Study
at the Norwegian Academy of Science and Letters under the program Mathematical
Challenges in Brain Mechanics.

%% The Appendices part is started with the command \appendix;
%% appendix sections are then done as normal sections
% \appendix
% \section{Example Appendix Section}
% \label{app1}

% Appendix text.

% %% For citations use: 
% %%       \cite{<label>} ==> [1]

% %%
% Example citation, See \cite{lamport94}.

%% If you have bib database file and want bibtex to generate the
%% bibitems, please use
%%
%%  \bibliographystyle{elsarticle-num} 
%%  \bibliography{<your bibdatabase>}

%% else use the following coding to input the bibitems directly in the
%% TeX file.

%% Refer following link for more details about bibliography and citations.
%% https://en.wikibooks.org/wiki/LaTeX/Bibliography_Management

%\begin{thebibliography}{00}

%% For numbered reference style
%% \bibitem{label}
%% Text of bibliographic item

% \bibitem{lamport94}
%   Leslie Lamport,
%   \textit{\LaTeX: a document preparation system},
%   Addison Wesley, Massachusetts,
%   2nd edition,
%   1994.

%\end{thebibliography}
\bibliographystyle{elsarticle-num}
\bibliography{references}
\end{document}